\DeclareMathOperator{\BiPert}{BiPert}
\title{On perturbations of continuous structures}
\author{Itaï \textsc{Ben Yaacov}}
\address{Itaï \textsc{Ben Yaacov} \\
  Université Claude Bernard -- Lyon 1 \\
  Institut Camille Jordan \\
  43 boulevard du 11 novembre 1918 \\
  69622 Villeurbanne Cedex \\
  France}
\urladdr{\url{http://math.univ-lyon1.fr/~begnac/}}
\thanks{Research partially supported by NSF grant DMS-0500172,
  ANR chaire d'excellence junior THEMODMET (ANR-06-CEXC-007) and
  by Marie Curie research network ModNet.}
\thanks{The author would like to thank
  the Isaac Newton Institute and the organisers of the programme on
  Model Theory and Applications to Algebra and Analysis, during which
  this work was initiated;
  C.\ Ward Henson for many helpful discussions and comments;
  and Hernando Tellez for a careful reading of the manuscript.}
\thanks{\textit{Revision} {\svnInfoRevision} \textit{of} \today}
\keywords{continuous logic, metric structures, perturbation, categoricity}
\subjclass[2000]{03C35,03C90,03C95}
\begin{document}

\begin{abstract}
  We give a general framework for the treatment of perturbations of
  types and structures in continuous logic, allowing to
  specify which parts of the logic may be perturbed.
  We prove that separable, elementarily equivalent structures which are
  approximately $\aleph_0$-saturated up to arbitrarily small
  perturbations are isomorphic up to arbitrarily small perturbations
  (where the notion of perturbation is part of the data).
  As a corollary, we obtain a Ryll-Nardzewski style characterisation
  of complete theories all of whose separable models are isomorphic
  up to arbitrarily small perturbations.
\end{abstract}

\maketitle

\section*{Introduction}

In this paper we define what we call perturbation systems and study
their basic properties.
These are objects which formalise the intuitive notion of
allowing chosen parts of a metric structure
to be perturbed by arbitrarily
small amounts.

One motivation for this notion comes from an unpublished result of
C.\ Ward Henson,
consisting of a Ryll-Nardzewski style characterisation of complete
continuous theories of pure Banach spaces which are
separably categorical up to
arbitrarily small perturbation of the norm
(but not of the underlying linear structure).
Seeking a general framework in which such results can be proved,
we develop a general formalism for the consideration of
metric structures and types up to small perturbations,
which gives rise in particular to a notion of categoricity up to
perturbation.
In \fref{thm:PertRN} we give a general
Ryll-Nardzewski style characterisation of
complete countable continuous theories which are separably categorical
up to arbitrarily small perturbation, where the precise notion of
perturbation is part of the given data alongside the theory.
One convenient way of specifying a ``perturbation system'' $\fp$
is via the perturbation distance $d_\fp$ between types,
where $d_\fp(p,q) \in [0,\infty]$ measures by how much a model needs
to be perturbed so that a realisation of $p$ may become a realisation of
$q$ (and $d_\fp(p,q) = \infty$ if this is impossible).

Our criterion for $\aleph_0$-categoricity up to perturbation bears
considerable resemblance to the one used by Henson, as
both criteria compare the standard logic topology on a space of types
with an appropriate metric arising from the perturbation system.
In Henson's criterion, the topology is compared directly to the
Banach-Mazur perturbation distance $d_{BM}$
on the space of types of \emph{linearly independent} tuples
of a Banach space, which he calls $\tS_n^*$.
In the general case considered in \fref{thm:PertRN}
we do not have an analogue
of $\tS_n^*$, so the comparison must take place on the entire type
space.
This entails an additional complexity, not present in Henson's
criterion, in that the topology must be compared to
an appropriate combination of the perturbation metric $d_\fp$
with the standard distance $d$.
A result based on Henson's criterion appears in a
subsequent paper \cite{BenYaacov:Unbounded},
where we deal with further complications caused by the fact
that a Banach space
is an unbounded structure whose unit ball is not preserved by
a non trivial Banach-Mazur perturbation.

A second motivation comes from some open problems concerning the
automorphism group of the separable model of an $\aleph_0$-categorical
continuous theory.
Such problems could be addressed from a model-theoretic point of view
as questions concerning the theory $T_A$ (i.e., $T$ with a generic
automorphism, or even several non commuting ones).
Just as the underlying metric of a continuous structure induces a
natural metric on the space of types, it also induces one on its
automorphism group, namely the metric of uniform convergence
(if the structure is discrete, so are the induced metrics,
so they simply do not arise as interesting objects in the classical
discrete setting).
The model theoretic counterpart of the consideration of
small metric neighbourhoods of an automorphism is the consideration of
$(M,\sigma) \models T_A$ up to small perturbations of $\sigma$.
While the present paper does not contain any results in this
direction, this did serve well as an example towards the
general setting, and in fact was at the origin of the
author's interest in perturbations.

A common feature of these two instances is that only
part of the structure is allowed to be perturbed
while the rest is kept untouched.
In the first case, the norm is perturbed while the linear structure is
untouched, while in the second it is only the
automorphism that we perturb (and not the original structure).
Thus a ``notion of perturbation'' should say what parts of the
structure can be perturbed, and in what way.
Also, in order to state a Ryll-Nardzewski style result concerning
perturbations we need to consider on the one hand perturbations of
(separable) models, and on the other perturbations of types.

\medskip

In \fref{sec:Pert} we compare these two notions (perturbations of
structures and of types): requiring them to be compatible yields the
notion of a \emph{perturbation radius}.
In order to speak of ``arbitrarily small perturbation'' we need to
consider a system of perturbation radii decreasing to the zero
perturbation, which with some natural additional properties yields the
notion of a \emph{perturbation system}.

In \fref{sec:PertOSat} we study a variant of the notion of
approximate $\aleph_0$-saturation which takes into account a perturbation
system, and show that separable models which are saturated in this
sense are also isomorphic up to small perturbation.

In \fref{sec:PertRN} we prove the main result (\fref{thm:PertRN}) and
discuss various directions in which it may and may not be further generalised
(\fref{thm:OpenPertRN} vs.\ \fref{exm:LpCntrExm}).

In \fref{sec:PertAut} we
conclude with a few questions concerning perturbations of
automorphisms.

\medskip

Notation is mostly standard.
We use $a$, $b$, $c$, \ldots to denote members of structures,
and use $x$, $y$, $z$, \ldots to denote variables.
Bar notation is used for (usually finite) tuples, and uppercase
letters are used for sets.
We also write $\bar a \in A$ to say that $\bar a$ is a tuple
consisting of members of $A$, i.e., $\bar a \in A^n$ where
$n = |\bar a|$.

We work in the
framework of continuous first order logic, as developed in
\cite{BenYaacov-Usvyatsov:CFO}.
Most of the time we work within the context of a fixed continuous
theory $T$ in a language $\cL$.
We always assume that $T$ is closed under logical consequences.
In particular, $|T| = |\cL| + \aleph_0$ and
$T$ is countable if and only if $\cL$ is.

For a general survey of the model theory of metric structures
we refer the reader to
\cite{BenYaacov-Berenstein-Henson-Usvyatsov:NewtonMS}.

\section{Perturbations}
\label{sec:Pert}

\subsection{Perturbation pre-radii}

We start by formalising the notion of allowing structures and types
to be perturbed ``by this much''.
We start by defining perturbation pre-radii, which tell us which types
can be changed into which:
\begin{dfn}
  \label{dfn:PertPreRadius}
  A \emph{perturbation pre-radius} $\rho$ (for a fixed theory $T$)
  is a family of closed
  subsets $\{\rho_n \subseteq \tS_n(T)^2\}$ containing the diagonals.
  If $X \subseteq \tS_n(T)$, then the $\rho$-neighbourhood around $X$ is
  defined as:
  \begin{align*}
    X^\rho = \{q \colon (\exists p\in X) \, (p,q) \in \rho_n\}.
  \end{align*}
  Notice that if $X$ is closed then so is $X^\rho$.
\end{dfn}

\begin{rmk}
  A predecessor of sorts to this definition exists in José Iovino's
  notion of a \emph{uniform structure} on the type spaces of a positive
  bounded theory \cite{Iovino:StableBanach}.
  Specifically, a uniform structure in Iovino's sense can be generated
  by vicinities which are given by perturbation pre-radii (although
  the definition of a pre-radius does not appear in Iovino's work).
  Specific perturbation systems (see \fref{dfn:PertSys} below)
  of importance, such as the Banach-Mazur system,
  occur as uniform structures in Iovino's work. 
\end{rmk}

We wish to consider mappings which perturb structures: they need not be
elementary, and are merely required to respect the
perturbation pre-radius.
\begin{dfn}
  \begin{enumerate}
  \item 
    Let $\rho$ be a perturbation pre-radius, $M,N \models T$.
    A \emph{partial $\rho$-perturbation} from $M$ into $N$
    is a partial mapping $f\colon M \dashrightarrow N$ such that for every
    $\bar a \in \dom(f)$:
    \begin{align*}
      \tp^N(f(\bar a)) \in \tp^M(\bar a)^\rho.
    \end{align*}
    If $f$ is total then it is a $\rho$-perturbation of $M$
    into $N$.
    The set of all $\rho$-perturbations of $M$ into $N$ is
    denoted $\Pert_\rho(M,N)$.
  \item If $f \in \Pert_\rho(M,N)$ is bijective, and
    $f^{-1} \in \Pert_\rho(N,M)$, we say that $f\colon M\to N$ is a
    \emph{$\rho$-bi-perturbation}, in symbols $f \in \BiPert_\rho(M,N)$.
  \item   We say that two perturbation pre-radii $\rho$ and $\rho'$ are
    \emph{equivalent}, in symbols $\rho \sim \rho'$,
    if $\Pert_\rho(M,N) = \Pert_{\rho'}(M,N)$ for all
    $M,N \models T$.
    We say they are \emph{bi-equivalent}, in symbols $\rho \approx \rho'$,
    if $\BiPert_\rho(M,N) = \BiPert_{\rho'}(M,N)$ for all
    $M,N \models T$.

    Note that $\rho \sim \rho' \Longrightarrow \rho \approx \rho'$.
  \item If $\rho$ and $\rho'$ are two perturbation pre-radii,
    we write $\rho \leq \rho'$ to mean that $\rho_n \subseteq \rho'_n$
    for all $n$ (i.e., $\rho$
    is \emph{stricter} than $\rho'$).
  \end{enumerate}
\end{dfn}

\begin{lem}
  For every perturbation pre-radius $\rho$ there exists a minimal
  perturbation pre-radius equivalent to $\rho$, denoted
  $\langle\rho\rangle$, and a minimal perturbation pre-radius bi-equivalent
  to $\rho$, denoted $\llbracket\rho\rrbracket$.

  If $\rho = \langle\rho\rangle$ we say that $\rho$ is \emph{reduced}.
  If $\rho = \llbracket\rho\rrbracket$ we say that $\rho$ is \emph{bi-reduced}.
\end{lem}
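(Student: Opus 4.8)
The plan is to obtain $\langle\rho\rangle$ and $\llbracket\rho\rrbracket$ simply as componentwise intersections of all pre-radii in the respective equivalence classes, and then to check that these intersections still lie in those classes. First I would dispense with any set-theoretic worry: a perturbation pre-radius is a sequence $(\rho_n)_{n<\omega}$ with $\rho_n$ a subset of the set $\tS_n(T)^2$, so the perturbation pre-radii form a set, and in particular $\{\rho' : \rho'\sim\rho\}$ and $\{\rho' : \rho'\approx\rho\}$ are sets. I would then define $\langle\rho\rangle_n = \bigcap\{\rho'_n : \rho'\sim\rho\}$ and $\llbracket\rho\rrbracket_n = \bigcap\{\rho'_n : \rho'\approx\rho\}$. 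Each is an intersection of closed subsets of $\tS_n(T)^2$ containing the diagonal, hence again a closed set containing the diagonal, so both $\langle\rho\rangle$ and $\llbracket\rho\rrbracket$ are genuine perturbation pre-radii. By construction $\langle\rho\rangle\leq\rho$ (since $\rho\sim\rho$) and $\langle\rho\rangle\leq\rho'$ for every $\rho'\sim\rho$, and likewise for $\llbracket\rho\rrbracket$; this is exactly the minimality required, once we show $\langle\rho\rangle\sim\rho$ and $\llbracket\rho\rrbracket\approx\rho$.

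For the latter I would record the obvious monotonicity fact: if $\sigma\leq\sigma'$ then $X^\sigma\subseteq X^{\sigma'}$ for every $X$, whence $\Pert_\sigma(M,N)\subseteq\Pert_{\sigma'}(M,N)$ and $\BiPert_\sigma(M,N)\subseteq\BiPert_{\sigma'}(M,N)$ for all $M,N\models T$. Applied to $\langle\rho\rangle\leq\rho$ this gives $\Pert_{\langle\rho\rangle}(M,N)\subseteq\Pert_\rho(M,N)$. For the reverse inclusion, take $f\in\Pert_\rho(M,N)$; for each $\rho'\sim\rho$ we have $f\in\Pert_{\rho'}(M,N)=\Pert_\rho(M,N)$, so $(\tp^M(\bar a),\tp^N(f(\bar a)))\in\rho'_n$ for every $\bar a\in\dom(f)$ of length $n$. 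Intersecting over all such $\rho'$ yields $(\tp^M(\bar a),\tp^N(f(\bar a)))\in\langle\rho\rangle_n$, i.e.\ $f\in\Pert_{\langle\rho\rangle}(M,N)$. Hence $\langle\rho\rangle\sim\rho$. Minimality is immediate from the construction; uniqueness follows since two least elements of the partial order $\leq$ must coincide, justifying the notation $\langle\rho\rangle$; and applying the construction to $\langle\rho\rangle$ (whose equivalence class equals that of $\rho$) shows $\langle\langle\rho\rangle\rangle=\langle\rho\rangle$, i.e.\ $\langle\rho\rangle$ is reduced.

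The bi-perturbation case runs identically with $\Pert$ replaced by $\BiPert$ and $\sim$ by $\approx$: from $\llbracket\rho\rrbracket\leq\rho$ we get $\BiPert_{\llbracket\rho\rrbracket}(M,N)\subseteq\BiPert_\rho(M,N)$, and conversely a bijection $f\in\BiPert_\rho(M,N)$ lies in each $\BiPert_{\rho'}(M,N)$ for $\rho'\approx\rho$, so the type-pairs arising from both $f$ and $f^{-1}$ lie in every $\rho'_n$, hence in $\llbracket\rho\rrbracket_n$, giving $f\in\BiPert_{\llbracket\rho\rrbracket}(M,N)$. I do not foresee a serious obstacle; the one point requiring a little care is the direction $\Pert_\rho\subseteq\Pert_{\langle\rho\rangle}$ (and its bi-version), where one must use that membership in $\Pert_\rho(M,N)$ constrains only the type-pairs $(\tp^M(\bar a),\tp^N(f(\bar a)))$ actually realised by $f$, and that these, being common to all pre-radii in the equivalence class, fall into the intersection defining $\langle\rho\rangle$. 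Everything else is bookkeeping about closed sets and the order $\leq$.
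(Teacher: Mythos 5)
Your proof is correct and takes essentially the same approach as the paper: define $\langle\rho\rangle$ and $\llbracket\rho\rrbracket$ as the componentwise intersections of all pre-radii in the respective equivalence classes, then check that these are still pre-radii in the same class. The paper compresses all of this into a one-line ``one just verifies,'' whereas you have spelled out the verification, including the key observation that a (bi-)perturbation only constrains the type-pairs it actually realises, so those pairs survive into the intersection.
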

\begin{proof}
  One just verifies that
  $\langle\rho\rangle = \bigcap \{\rho'\colon \rho' \sim \rho\}$ and $\llbracket\rho\rrbracket = \bigcap \{\rho'\colon \rho' \approx \rho\}$ are perturbation
  pre-radii which are equivalent and bi-equivalent, respectively, to
  $\rho$.
\end{proof}

Note that $\llbracket\rho\rrbracket \leq \langle\rho\rangle \leq \rho$, so if $\rho$ is bi-reduced it is reduced.

\begin{dfn}
  Let $\rho,\rho'$ be perturbation pre-radii.
  We define their \emph{composition}
  as the pre-radius $\rho' \circ \rho$
  defined by:
  \begin{align*}
    (\rho' \circ \rho)_n = \{(p,q)\colon \exists r\, (p,r) \in \rho_n \text{ and } (r,q) \in \rho'_n\}.
  \end{align*}
\end{dfn}
It may be convenient to think of a perturbation pre-radius as the
graphs of a family multi-valued mappings $\rho_n\colon \tS_n(T) \to \tS_n(T)$.
In this case, our notion of composition above is indeed the
composition of multi-valued mappings.

Notice that we also obtain a composition mapping for perturbations:
\begin{gather*}
  \circ\colon \Pert_\rho(M,N) \times \Pert_{\rho'}(N,L) \to \Pert_{\rho\circ\rho'}(M,L).
\end{gather*}

The minimal perturbation pre-radius is
$\id = \{\id_n\colon n \in \bN\}$, where $\id_n$ is the diagonal of $\tS_n(T)$,
i.e., the graph of the identity mapping.
It is bi-reduced, $\rho \circ \id = \id \circ \rho = \rho$ for all $\rho$,
and an $\id$-perturbation is
synonymous with an elementary embedding.

\subsection{Perturbation radii}

A perturbation pre-radius imposes a family of
conditions saying which types may be perturbed to which.
We may further require these conditions to be compatible with one
another:
\begin{dfn}
  A \emph{perturbation radius} is a pre-radius $\rho$ satisfying that
  for any two types $(p,q) \in \rho_n$ there exist models
  $M$ and $N$ and a $\rho$-perturbation $f\colon M \to N$ sending some
  realisation of $p$ to a realisation of $q$.
\end{dfn}
Notice that the identity perturbation pre-radius is a perturbation
radius.

We now try to break down the notion of a perturbation radius into
several technical properties and see what each of them means.

For our purposes, a \emph{(uniform) continuity modulus} is
a mapping $\delta\colon (0,\infty) \to (0,\infty)$
which is increasing and left-continuous.
(In other words, this is a mapping satisfying
$\delta(\varepsilon)
= \sup_{\varepsilon'<\varepsilon} \delta(\varepsilon')$.
This additional property does not play any role at this stage,
but is harmless to assume.)
A mapping between metric spaces
$f\colon (X,d) \to (X',d')$ \emph{respects} $\delta$ if for all
$\varepsilon > 0$
and all $x,y \in X$:
\begin{gather*}
  d(x,y)< \delta(\varepsilon)
  \quad \Longrightarrow \quad
  d'(f(x),f(y)) \leq \varepsilon.
\end{gather*}
Such a mapping $f$ is uniformly continuous if and only if it respects
some uniform continuity modulus.

\begin{dfn}
  Let $\rho$ be a perturbation pre-radius.
  \begin{enumerate}
  \item We say that $\rho$ \emph{respects equality}
    if
    $$[x=y]^\rho = [x=y].$$
    (I.e., if $(p,q) \in \rho_2$,
    $p \models x=y$, then $q \models x = y$ as well).
  \item We say that $\rho$ \emph{respects a continuity modulus $\delta$} if
    every $\rho$-perturbation does.
  \item We say that $\rho$ is \emph{uniformly continuous} if it respects
    some continuity modulus $\delta$.
  \item We say that $\rho$ respects a continuity modulus $\delta$
    \emph{trivially} if for all $\varepsilon > 0$:
    \begin{gather*}
      [d(x,y) < \delta(\varepsilon)]^\rho \subseteq [d(x,y) \leq \varepsilon].
    \end{gather*}
  \end{enumerate}
\end{dfn}

\begin{lem}
  \label{lem:RespEq}
  A perturbation pre-radius $\rho$ respects equality if and only if
  there exists a continuity modulus $\delta$ which $\rho$ respects trivially.
\end{lem}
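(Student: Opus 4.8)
The plan is to prove both implications separately, treating the type-space condition on $\rho$ (closedness of each $\rho_n$ and the fact that $[x=y]^\rho = [x=y]$) as the essential combinatorial input, and extracting a modulus $\delta$ from a compactness argument on the type space $\tS_2(T)$. For the easy direction, suppose $\rho$ respects some continuity modulus $\delta$ trivially. Then for $\varepsilon > 0$ we have $[d(x,y) < \delta(\varepsilon)]^\rho \subseteq [d(x,y) \leq \varepsilon]$; since $[x=y] = \bigcap_{\varepsilon > 0} [d(x,y) \leq \varepsilon]$ and $[x = y] \subseteq [d(x,y) < \delta(\varepsilon)]$ (as $\delta(\varepsilon) > 0$), we get $[x=y]^\rho \subseteq [d(x,y)\le\varepsilon]$ for every $\varepsilon$, hence $[x=y]^\rho \subseteq [x=y]$; the reverse inclusion holds because $\rho_2$ contains the diagonal. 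So $\rho$ respects equality.

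For the converse, assume $[x=y]^\rho = [x=y]$. I want to produce a continuity modulus $\delta$ with $[d(x,y) < \delta(\varepsilon)]^\rho \subseteq [d(x,y) \leq \varepsilon]$ for all $\varepsilon > 0$. Fix $\varepsilon > 0$ and consider the closed set $C_\varepsilon = \rho_2 \cap \bigl( \tS_2(T) \times [d(x,y) > \varepsilon] \bigr)$ inside $\tS_2(T)^2$ — strictly I should work with $[d(x,y) \ge \varepsilon']$ for $\varepsilon' < \varepsilon$ and take a union, or more cleanly note that $[d(x,y) > \varepsilon]$ is open, so I instead set $C_\varepsilon = \rho_2 \cap \bigl(\tS_2(T) \times [d(x,y) \ge \varepsilon]\bigr)$, a closed hence compact subset of $\tS_2(T)^2$. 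Its projection $\pi_1(C_\varepsilon)$ to the first coordinate is a closed (compact) subset of $\tS_2(T)$. The hypothesis $[x=y]^\rho = [x=y]$ tells us that no $p \models x = y$ lies in $\pi_1(C_\varepsilon)$ when $\varepsilon > 0$: if $(p,q) \in \rho_2$ with $p \models x=y$ then $q \models x=y$, so $q \models d(x,y) = 0$, contradicting $q \in [d(x,y) \ge \varepsilon]$. Thus $\pi_1(C_\varepsilon)$ is a compact set disjoint from the compact set $[x=y]$. Since the predicate $d(x,y)$ is continuous on $\tS_2(T)$ and equals $0$ exactly on $[x=y]$, its infimum over the compact set $\pi_1(C_\varepsilon)$ is some $\delta_0(\varepsilon) > 0$; that is, every $p \in \pi_1(C_\varepsilon)$ satisfies $d(x,y)^p \ge \delta_0(\varepsilon)$. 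Equivalently, if $p \models d(x,y) < \delta_0(\varepsilon)$ and $(p,q) \in \rho_2$, then $q \notin [d(x,y) \ge \varepsilon]$, i.e. $q \models d(x,y) < \varepsilon \le \varepsilon$, giving $[d(x,y) < \delta_0(\varepsilon)]^\rho \subseteq [d(x,y) \le \varepsilon]$.

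It remains to repair $\delta_0$ into an honest continuity modulus, i.e. an increasing, left-continuous map $(0,\infty) \to (0,\infty)$. One checks $\delta_0$ is already positive; I replace it by $\delta(\varepsilon) = \sup_{\varepsilon' < \varepsilon} \min(\varepsilon', \delta_0(\varepsilon'))$, or more simply by $\delta(\varepsilon) = \inf_{\varepsilon'' \ge \varepsilon}\delta_0(\varepsilon'')$ followed by a left-continuous regularisation — any standard tidying works since shrinking $\delta$ only strengthens the inclusion $[d(x,y) < \delta(\varepsilon)]^\rho \subseteq [d(x,y)\le\varepsilon]$ (and one should make $\delta(\varepsilon) \le \varepsilon$ so that monotonicity is easy to arrange while keeping positivity on each $(0,\infty)$). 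Finally, a pre-radius that respects a modulus trivially respects it in the ordinary sense: given a $\rho$-perturbation $f\colon M \to N$ and $\bar a = (a_1,a_2)$, $\bar b = (b_1,b_2)$ from $\dom(f)$ with $d(a_1,a_2) < \delta(\varepsilon)$, the pair $\bigl(\tp(a_1,a_2), \tp(f(a_1),f(a_2))\bigr)$ lies in $\rho_2$ with the first type in $[d(x,y) < \delta(\varepsilon)]$, so the second is in $[d(x,y) \le \varepsilon]$, i.e. $d(f(a_1),f(a_2)) \le \varepsilon$; hence $\rho$ is uniformly continuous. The main obstacle is the compactness/projection step: making sure the relevant subsets of $\tS_2(T)^2$ are genuinely closed (using that $\rho_2$ is closed by definition and that $[d(x,y) \ge \varepsilon]$ is closed), and that the projection of a compact set is compact so that the continuous function $d(x,y)$ attains a positive minimum on it — everything else is bookkeeping.
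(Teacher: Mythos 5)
Your proof is correct and uses essentially the same compactness argument as the paper's, just framed directly (extracting a positive minimum of the continuous predicate $d(x,y)$ on the compact projection $\pi_1(C_\varepsilon)$, which is disjoint from $[x=y]$) rather than contrapositively (where the paper builds a bad pair in $[x=y]\times[d(x,y)\ge\varepsilon]$ as an accumulation point); your explicit care in repairing $\delta_0$ into an increasing, left-continuous modulus is a step the paper's contrapositive presentation elides. The closing paragraph, showing that trivial respect of $\delta$ implies ordinary respect, is not part of this lemma — it is the easy direction of the lemma that immediately follows in the paper.
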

\begin{proof}
  Assume first that $\rho$ respects $\delta$ trivially.
  For every $\varepsilon > 0$ we have $\delta(\varepsilon) > 0$, whereby
  $[x=y] \subseteq [d(x,y) < \delta(\varepsilon)]$ and thus
  $[x=y]^\rho \subseteq [d(x,y) \leq \varepsilon]$.
  Therefore $[x=y]^\rho \subseteq \bigcap_{\varepsilon>0} [d(x,y)\leq\varepsilon] = [x=y]$.
  As the other inclusion is always true, we obtain equality.

  Conversely, assume that $\rho$ respects no $\delta$ trivially.
  Then there exists some $\varepsilon > 0$ such that for all $\delta > 0$
  there is some pair
  $(p_\delta,q_\delta) \in \rho_2$ such that
  $p_\delta \in[d(x,y)<\delta]$ and $q_\delta \in [d(x,y) > \varepsilon]$.
  Since $\tS_2(T)^2$ is compact this sequence has an accumulation
  point $(p,q)$ as $\delta$ goes to $0$.
  Since $\rho_2$ is closed we have $(p,q) \in \rho_2$, and clearly
  $(p,q) \in [x=y]\times[d(x,y) \geq \varepsilon]$ as well,
  so $[x=y]^\rho \neq [x=y]$.
\end{proof}

\begin{lem}
  Let $\rho$ be a perturbation pre-radius
  and $\delta$ a continuity modulus.
  If $\rho$ respects $\delta$ trivially then it respects $\delta$.
  Conversely, if $\rho$ respects $\delta$ then $\langle\rho\rangle$
  respects $\delta$ trivially.

  In particular, if $\rho$ is reduced then it respects $\delta$ if and only if
  it respects it trivially.
\end{lem}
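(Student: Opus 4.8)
The plan is to prove the two implications and then deduce the ``in particular'' clause formally. The first implication, that trivially respecting $\delta$ implies respecting $\delta$, is the routine direction: if $f \in \Pert_\rho(M,N)$ and $d^M(a,b) < \delta(\varepsilon)$ for some $a,b \in \dom(f) \subseteq M$, I want to conclude $d^N(f(a),f(b)) \leq \varepsilon$. Since $\tp^N(f(a),f(b)) \in \tp^M(a,b)^\rho$ and $\tp^M(a,b) \in [d(x,y) < \delta(\varepsilon)]$, the hypothesis $[d(x,y) < \delta(\varepsilon)]^\rho \subseteq [d(x,y) \leq \varepsilon]$ gives $\tp^N(f(a),f(b)) \in [d(x,y) \leq \varepsilon]$, i.e.\ $d^N(f(a),f(b)) \leq \varepsilon$. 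So $f$ respects $\delta$, hence $\rho$ does.

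The second implication is where the real content lies, and I expect it to be the main obstacle: I must show that if $\rho$ respects $\delta$ (i.e.\ \emph{every} $\rho$-perturbation respects $\delta$), then the reduced pre-radius $\langle\rho\rangle$ respects $\delta$ trivially, i.e.\ $[d(x,y) < \delta(\varepsilon)]^{\langle\rho\rangle} \subseteq [d(x,y) \leq \varepsilon]$ for all $\varepsilon$. Fix $(p,q) \in \langle\rho\rangle_2$ with $p \in [d(x,y) < \delta(\varepsilon)]$; I need $q \in [d(x,y)\leq\varepsilon]$. The key point is to characterise $\langle\rho\rangle$ more concretely than as the intersection $\bigcap\{\rho' : \rho' \sim \rho\}$: a pair $(p,q)$ lies in $\langle\rho\rangle_n$ if and only if it is ``witnessed'' by an actual perturbation, i.e.\ there exist $M,N \models T$ and $f \in \Pert_\rho(M,N)$ and $\bar a \in M$ with $\tp^M(\bar a) = p$ and $\tp^N(f(\bar a)) = q$ — equivalently $(p,q)$ is forced into every pre-radius equivalent to $\rho$ precisely because one can build a $\rho$-perturbation realising it. (One direction is immediate; the other direction is that the family of such witnessed pairs is itself a closed pre-radius equivalent to $\rho$, which is essentially the content of passing to a perturbation \emph{radius}; I would either cite this or sketch that the set of witnessed pairs is closed by a compactness/ultraproduct argument and contains the diagonals, and any $\rho' \supseteq$ (witnessed pairs) satisfies $\Pert_{\rho'} = \Pert_\rho$.) Granting this characterisation, take the witnessing $f \in \Pert_\rho(M,N)$ and $a,b \in M$ with $\tp^M(a,b) = p$, $\tp^N(f(a),f(b)) = q$. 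Then $d^M(a,b) < \delta(\varepsilon)$ since $p \in [d(x,y)<\delta(\varepsilon)]$, and because $f$ respects $\delta$ (as $\rho$ does) we get $d^N(f(a),f(b)) \leq \varepsilon$, i.e.\ $q \in [d(x,y)\leq\varepsilon]$, as required.

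Finally, the ``in particular'' clause: if $\rho$ is reduced then $\langle\rho\rangle = \rho$, so by the second implication ``$\rho$ respects $\delta$'' gives ``$\rho = \langle\rho\rangle$ respects $\delta$ trivially,'' and conversely by the first implication ``$\rho$ respects $\delta$ trivially'' gives ``$\rho$ respects $\delta$''; hence for reduced $\rho$ the two notions coincide.

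The step I expect to be genuinely delicate is the concrete description of $\langle\rho\rangle$ in terms of witnessed pairs and the verification that the witnessed pairs form a closed pre-radius — this is where a compactness argument in the style of the proof of \fref{lem:RespEq} (accumulation points of types, using that $\rho_n$ is closed) will be needed, possibly together with an ultraproduct construction to glue together witnesses. Everything downstream of that characterisation is a short unwinding of definitions.
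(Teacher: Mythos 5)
Your proof is correct, but it takes a genuinely heavier route than the paper's. You pass through the characterisation of $\langle\rho\rangle$ as the set of pairs witnessed by an actual $\rho$-perturbation (the paper's $\rho_\cF$ from the proof of the lemma on uniformly continuous reduced pre-radii), which forces you into an ultraproduct argument and implicitly uses uniform continuity of $\rho$ — available here, since ``$\rho$ respects $\delta$'' is one instance of uniform continuity, but still machinery. The paper avoids this entirely: it takes the closed set
\[
X_\delta = \bigcap_{\varepsilon>0} \bigl([d(x,y) \geq \delta(\varepsilon)]\times\tS_2(T) \,\cup\, \tS_2(T) \times [d(x,y) \leq\varepsilon] \bigr),
\]
replaces $\rho_2$ by $\rho_2\cap X_\delta$ to get a pre-radius $\rho'$, observes $\rho'\sim\rho$ (every $\rho$-perturbation respects $\delta$, so all its graph pairs already sit in $X_\delta$, and the identity map gives the diagonal), and then concludes $\langle\rho\rangle\leq\rho'$ so that $\langle\rho\rangle_2\subseteq X_\delta$, which is exactly ``trivially respects $\delta$.'' Thus the paper needs only to exhibit \emph{one} equivalent pre-radius inside $X_\delta$, not to identify $\langle\rho\rangle$ exactly. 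One small inaccuracy in your sketch: the assertion that any $\rho'\supseteq(\text{witnessed pairs})$ satisfies $\Pert_{\rho'}=\Pert_\rho$ is false as stated (enlarging a pre-radius can admit more perturbations); what you need, and what is true, is that the witnessed pairs form a pre-radius $\rho_\cF\sim\rho$ and that every $\rho'\sim\rho$ \emph{contains} $\rho_\cF$, whence $\langle\rho\rangle=\rho_\cF$. With that correction your argument goes through.
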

\begin{proof}
  The first statement is straightforward.
  For the converse, let:
  \begin{align*}
    X_\delta
    = & \bigl\{
    (p,q) \in \tS_2(T)\colon
    (\forall\varepsilon>0)
    (d(x,y)^p < \delta(\varepsilon) \to d(x,y)^q\leq\varepsilon)
    \bigr\} \\
    = & \bigcap_{\varepsilon>0} \bigl([d(x,y) \geq \delta(\varepsilon)]\times\tS_2(T) \cup \tS_2(T) \times [d(x,y) \leq\varepsilon] \bigr).
  \end{align*}
  Let $\rho'$ be obtained from $\rho$ by replacing
  $\rho_2$ with $\rho_2 \cap X_\delta$.
  Notice that the identity mapping of any model of
  $T$ is a $\rho$-perturbation and must therefore respect $\delta$, so $X_\delta$
  contains the diagonal and $\rho'$ is a perturbation pre-radius.
  Clearly $\rho' \sim \rho$, so $\langle\rho'\rangle = \langle\rho\rangle$, and $\rho'$ respects $\delta$
  trivially, whereby so does $\langle\rho\rangle$.
\end{proof}

Note that if $\rho$ is uniformly continuous,
$M,N \models T$, and $A \subseteq M$, then any partial $\rho$-perturbation
$f\colon A \to M$ is uniformly continuous.
It therefore extends uniquely to a mapping
$\bar f\colon \bar A \to M$.
As $\rho$ is given by closed sets, the completion $\bar f$ is also
a $\rho$-perturbation.

\begin{lem}
  \label{lem:UCRed}
  A perturbation pre-radius $\rho$ is a perturbation radius if and only
  if it is uniformly continuous and reduced.
\end{lem}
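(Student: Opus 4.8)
The plan is to prove the two implications separately. Call a pair $(p,q) \in \tS_n(T)^2$ \emph{realised} if there are $M,N \models T$ and a $\rho$-perturbation $f\colon M \to N$ sending some realisation of $p$ to a realisation of $q$; thus $\rho$ is a perturbation radius precisely when every pair in every $\rho_n$ is realised. For ``$\Rightarrow$'', assume $\rho$ is a perturbation radius. Given $(p,q) \in \rho_n$, a witnessing $\rho$-perturbation $f$ also lies in $\Pert_{\langle\rho\rangle}(M,N)$, since $\langle\rho\rangle \sim \rho$, whence $(p,q) \in \langle\rho\rangle_n$; so $\rho \leq \langle\rho\rangle$, and as the reverse inclusion always holds, $\rho$ is reduced. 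For uniform continuity, I would first observe that $\rho$ respects equality: if $(p,q) \in \rho_2$ and $p \models x=y$, then a witnessing perturbation sends a pair of equal elements to a pair of equal elements, so $q \models x=y$, i.e.\ $[x=y]^\rho = [x=y]$. Then \fref{lem:RespEq} supplies a continuity modulus $\delta$ which $\rho$ respects trivially, and the lemma stating that trivial respect of $\delta$ implies respect of $\delta$ makes $\rho$ uniformly continuous.

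For ``$\Leftarrow$'', assume $\rho$ is uniformly continuous and reduced, and let $\rho^r_n$ be the closure in $\tS_n(T)^2$ of the set of realised pairs. I would begin by verifying that $\rho^r$ is a perturbation pre-radius with $\rho^r \leq \rho$ and $\rho^r \sim \rho$. Every realised pair lies in the closed set $\rho_n$, so $\rho^r_n \subseteq \rho_n$; the diagonal consists of realised pairs, witnessed by identity maps of models (which are $\rho$-perturbations because $\rho_n$ contains the diagonal); and any $\rho$-perturbation $g$ is its own witness that $(\tp(\bar a),\tp(g(\bar a)))$ is realised for every tuple $\bar a$ in its domain, hence is a $\rho^r$-perturbation, while $\rho^r \leq \rho$ gives the reverse inclusion of the $\Pert$ sets. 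So $\rho^r \sim \rho$. Since $\rho$ is reduced, $\rho = \langle\rho\rangle \leq \rho^r \leq \rho$, that is $\rho = \rho^r$: every pair of $\rho_n$ is a limit of realised pairs.

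It then remains to show that the set of realised pairs is \emph{already} closed in $\tS_n(T)^2$; this is the step I expect to be the main obstacle, and the one place uniform continuity is needed. Given realised pairs $(p_k,q_k) \to (p,q)$, fix $\rho$-perturbations $f_k\colon M_k \to N_k$ with $f_k(\bar a_k) = \bar b_k$, $\bar a_k \models p_k$, $\bar b_k \models q_k$, a common continuity modulus $\delta$ respected by all the $f_k$, and a nonprincipal ultrafilter $\cU$ on $\bN$. The role of uniform continuity is exactly that it makes $f := \prod_k f_k / \cU$ a well-defined map $\prod_k M_k/\cU \to \prod_k N_k/\cU$ of ultraproducts of models of $T$ (without a common modulus the $f_k$ need not descend to the quotient metric spaces); then \L{}o\'{s}'s theorem together with the closedness of each $\rho_n$ shows that $f$ is a $\rho$-perturbation, and it carries $[\bar a_k]_\cU$, which realises $\lim_\cU p_k = p$, to $[\bar b_k]_\cU$, which realises $\lim_\cU q_k = q$. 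Hence $(p,q)$ is realised, so the realised pairs form a closed set, $\rho_n = \rho^r_n$ equals it, and $\rho$ is a perturbation radius. Everything apart from this well-definedness is routine manipulation of closed sets and the equivalences established above.
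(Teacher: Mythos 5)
Your proof is correct and takes essentially the same route as the paper: for the hard direction both arguments identify $\rho$ (via reducedness) with the set of realised pairs, and both invoke uniform continuity precisely so that an ultraproduct $\prod f_k/\cU$ of witnessing perturbations descends to a well-defined map, whence that set is closed and visibly a perturbation radius — the paper just packages this by observing that the family $\cF$ of all triples $(f,M,N)$ is closed under ultraproducts, rather than by first taking a topological closure and then collapsing it. One small caveat: since the paper does not assume $\cL$ countable, $\tS_n(T)^2$ need not be metrisable, so the closedness step should be phrased in terms of nets (ultrafilters on an arbitrary directed index set) rather than sequences on $\bN$; the argument goes through verbatim under this cosmetic change.
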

\begin{proof}
  Left to right is easy.
  For right to left, consider the family
  $\cF = \{(f,M,N)\colon M,N \models T, f \in \Pert_\rho(M,N)\}$.
  Since $\rho$ is uniformly continuous, ultra-products of families of
  triplets in $\cF$ exist, and since $\rho$ as a perturbation pre-radius
  consists of closed sets, $\cF$ is closed under ultra-products.
  Define $\rho_\cF$ by:
  $$\rho_{\cF,n} = \{(\tp(\bar a),\tp(\bar b))\colon
  (f,M,N) \in \cF, \bar a \in M^n, \bar b = f(\bar a)\}.$$
  Since $\cF$ is closed under ultra-products and contains all the
  identity mappings, $\rho_\cF$ is a perturbation pre-radius.
  It clearly satisfies $\rho_{\cF} \leq \rho$, $\rho_\cF \sim \rho$, and as $\rho$ is
  reduced we conclude that $\rho_\cF = \rho$.

  On the other hand, it is clear from the construction of $\rho_\cF$ that
  it is a perturbation radius.
\end{proof}

\begin{prp}
  \label{prp:PertRadGen}
  A perturbation pre-radius $\rho$ is equivalent to a perturbation radius
  if and only if it is uniformly continuous, in which case
  $\langle\rho\rangle$ is the unique perturbation radius equivalent to $\rho$.
\end{prp}
\begin{proof}
  Immediate from \fref{lem:UCRed}.
\end{proof}

Recall that if $p(\bar x,\bar y)$ is a partial type, then the property
$\exists\bar y\,p(\bar x,\bar y)$ (where the existential quantifier varies
over a sufficiently saturated elementary extension) is also definable
by a partial type.
\begin{dfn}
  A perturbation pre-radius $\rho$ \emph{respects the existential
    quantifier $\exists$}
  if for every
  partial type $p(\bar x,\bar y)$:
  $$[\exists\bar y \, p(\bar x,\bar y)]^\rho = [\exists\bar y\,p^\rho(\bar x,\bar y)].$$
\end{dfn}

\begin{lem}
  A perturbation pre-radius $\rho$ respects $\exists$ if and only if
  for every two sufficiently saturated models $M,N \models T$,
  tuples $\bar a \in M^n$, $\bar b \in N^n$, and $c \in M$:
  $$\tp(\bar b) \in \tp(\bar a)^\rho \Longleftrightarrow
  (\exists d \in N) \bigl( \tp(\bar bd) \in \tp(\bar ac)^\rho \bigr).$$
\end{lem}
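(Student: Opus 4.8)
The plan is to work entirely with spaces of types, converting the condition on structures into one on type spaces only at the very end. For a partial type $p$, write $[p]$ for the closed set of its completions; recall from the remark preceding the definition that for a partial type $p(\bar x,\bar y)$ the set $[\exists\bar y\,p]$ is the image of $[p]$ under the projection $\pi\colon\tS_{n+m}(T)\to\tS_n(T)$ (which is closed, by compactness), and that $p^\rho$ is the partial type characterised by $[p^\rho]=[p]^\rho$. So ``respects $\exists$'' asserts exactly that $(\pi[p])^\rho=\pi([p]^\rho)$ for every partial type $p(\bar x,\bar y)$. An easy induction on $|\bar y|$ — writing $\exists\bar y\,p=\exists\bar y'\,\exists y\,p$, using the single-variable case of ``respects $\exists$'' (with parameter variables $\bar x\bar y'$) to rewrite $(\exists y\,p)^\rho$ as $\exists y\,p^\rho$, and then applying the induction hypothesis to $\exists y\,p$ — reduces this, for all partial types, to the case where $\bar y$ is a single variable $y$. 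And since $X\mapsto X^\rho$ and $\pi$ both commute with arbitrary unions while a closed set is the union of its points, the single-variable case in turn need only be checked when $p$ is a complete type.

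So it suffices to show that, for a single variable $y$, the identity $(\pi\{p'\})^\rho=\pi(\{p'\}^\rho)$ for all complete $p'(\bar x,y)\in\tS_{n+1}(T)$ is precisely the displayed biconditional. Realise $p'=\tp^M(\bar a,c)$ with $M$ sufficiently saturated, $\bar a\in M^n$ and $c\in M$; as $(M,\bar a,c)$ varies, $p'$ ranges over all complete $(n+1)$-types. Now $\pi\{p'\}=\{\tp(\bar a)\}$, so $(\pi\{p'\})^\rho=\tp(\bar a)^\rho$; and $\{p'\}^\rho=\tp(\bar a c)^\rho$ as a set of complete types, so that, using that a sufficiently saturated $N$ realises every completion of each such type, $\pi(\{p'\}^\rho)=\{\tp(\bar b):\bar b\in N^n,\ (\exists d\in N)\,\tp(\bar b d)\in\tp(\bar a c)^\rho\}$. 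Equating these two subsets of $\tS_n(T)$, for every sufficiently saturated $N$ and every $\bar b\in N^n$, is exactly the assertion $\tp(\bar b)\in\tp(\bar a)^\rho\Leftrightarrow(\exists d\in N)\,\tp(\bar b d)\in\tp(\bar a c)^\rho$. This gives the equivalence. In particular the left-to-right direction needs no reduction at all: one simply feeds the complete type $\tp(\bar a c)$, with the single existential variable $y$, into the definition of ``respects $\exists$'' and reads off the biconditional as above.

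I do not expect a serious obstacle, since the argument is essentially bookkeeping; the one point demanding genuine care is the use of the saturation hypotheses on $M$ and $N$, which is what makes ``$\exists d\in N$'' a faithful substitute for ``there exists a completion of the type'' — and hence is why the hypothesis cannot be relaxed to arbitrary $M,N\models T$. One should also verify, though this is immediate from the definition of $X^\rho$, that $X\mapsto X^\rho$ commutes with arbitrary unions, since that commutation underlies the reduction to complete types.
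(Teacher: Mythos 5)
Your argument is correct. Since the paper's own proof is just the word ``Easy'', what you have done is the natural fleshing-out: identify ``respects $\exists$'' with the set-theoretic identity $(\pi[p])^\rho=\pi([p]^\rho)$ on type spaces; reduce to a single existential variable by iterating that identity (with the already-eliminated variables absorbed into $\bar x$); reduce further to complete types using that both $\pi$ and $X\mapsto X^\rho$ commute with arbitrary unions; and then translate the singleton identity $\tp(\bar a)^\rho=\pi\bigl(\tp(\bar a c)^\rho\bigr)$ into the displayed biconditional by realising types in sufficiently saturated models, where $\aleph_0$-saturation is exactly what makes ``$\exists d\in N$'' interchangeable with ``there exists a completion''. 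Each step is sound, and you correctly flag the two points that actually need checking: union-commutation of $X\mapsto X^\rho$ (which follows immediately from the definition $X^\rho=\{q:(\exists p\in X)\,(p,q)\in\rho_n\}$), and the role of saturation on both the $M$ side (to realise every $p'\in\tS_{n+1}(T)$ as $\tp(\bar ac)$) and the $N$ side (to realise every $n$-type and to extend realisations over finite tuples).
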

\begin{proof}
  Easy.
\end{proof}

If $\sigma$ is an $n$-permutation, it acts on $\tS_n(T)$
by
$\sigma^*(p(x_{<n})) = p(x_{\sigma^{-1}(0)},\ldots,x_{\sigma^{-1}(n-1)})$
(so
$\sigma^*(\tp(a_{<n})) = \tp(a_{\sigma(0)},\ldots,a_{\sigma(n-1)})$).
\begin{dfn}
  A perturbation pre-radius $\rho$ is \emph{permutation-invariant}
  if for every $n$, and every permutation $\sigma$ on $n$ elements, $\rho_n$
  is invariant under the action of $\sigma$.
  In other words, for every $p,q \in \tS_n(T)$:
  $$(p,q) \in \rho_n \Longleftrightarrow (\sigma^*(p),\sigma^*(q)) \in \rho_n.$$
\end{dfn}

\begin{prp}
  \label{prp:PertRadResp}
  Let $\rho$ be a perturbation pre-radius.
  Then the following are equivalent:
  \begin{enumerate}
  \item $\rho$ is a perturbation radius.
  \item $\rho$ respects $=$, $\exists$ and is permutation-invariant.
  \item Whenever $M,N \models T$, $\bar a \in M^n$, $\bar b \in N^n$,
    and $\tp(\bar b) \in \tp(\bar a)^\rho$,
    there exist an elementary extension $N' \succeq N$ and a
    $\rho$-perturbation $f\colon M \to N'$ sending $\bar a$
    to $\bar b$.
  \end{enumerate}
\end{prp}
\begin{proof}
  \begin{cycprf}
  \item[\impnext] Straightforward.
  \item[\impnext] Let $\bar a \in M$ and $\bar b \in N$ be such that
    $\bar b \models \tp(\bar a)^\rho$.
    Let $N' \succeq N$ realise every type of finite tuples over finite
    tuples in $N$.

    Since $\rho$ respects $\exists$ and $N'$ is sufficiently saturated,
    for every $\bar c \in M$ there is $\bar d \in N'$ such that
    $\bar b\bar d \models \tp(\bar a\bar c)^\rho$.
    Since $\rho$ respects equality, $(a_i \mapsto b_i) \cup (c_j \mapsto d_j)$ is a
    well-defined mapping, call it $f\colon M \dashrightarrow N'$.
    Since $\rho$ respects $\exists$ and is permutation-invariant,
    $f$ is a partial $\rho$-perturbation.

    Let $I$ be the family of all partial $\rho$-perturbations
    $f\colon M \dashrightarrow N'$ where $\dom(f)$ is finite containing
    $\bar a$, and $f(\bar a) = \bar b$.
    For any tuple $\bar c \in M$, let
    $J_{\bar c} = \{f \in I\colon \bar c \subseteq \dom(f)\}$, and let $\cF \subseteq \cP(I)$
    be the filter generated by the $J_{\bar c}$.
    By the argument above $\cF$ is a proper filter, and therefore
    extends to an ultra-filter $\sU$.

    Let $N'' = {N'}^\sU$.
    Let $g\colon M\to N''$ be given by $g = \prod_{f\in I} f/\sU$.
    In other words, for every $c \in M$ we define $g(c) \in N''$ to be
    $[c_f\colon f \in I] \in N''$, where $c_f = f(c)$ if $c \in \dom(f)$:
    since $J_{\{c\}}$ is a large set, we need not care about $c_f$ for other
    values of $f$.
    Identifying $N'$ with its diagonal embedding in $N''$ we have
    $N \preceq N' \preceq N''$, and clearly $g(\bar a) = \bar b$.

    Finally, for every (finite) tuple $\bar c \in M$ we have
    $g(\bar c) = [\bar c_f\colon f \in I]$, where $\bar c_f = f(\bar c)$ for
    every $f$ in the large set $J_{\bar c}$.
    Since $\tp(\bar c_f) \in \tp(\bar c)^\rho$ for all $f \in J_{\bar c}$,
    and $\tp(\bar c)^\rho$ is a closed set, we must have
    $\tp(g(\bar c)) \in \tp(\bar c)^\rho$.

    We conclude that $g\colon M \to N''$ is a $\rho$-perturbation as required.
  \item[\impfirst] Clear.
  \end{cycprf}
\end{proof}

It follows that the composition of perturbation radii is again one:
\begin{lem}
  \label{lem:Comp}
  If $\rho,\rho'$ are perturbation radii then $\rho' \circ \rho$
  is a perturbation
  radius as well.
\end{lem}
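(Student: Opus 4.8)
The plan is to use the characterisation of perturbation radii provided by \fref{prp:PertRadResp}, specifically the equivalence of clause (1) with clause (3). So first I would observe that since $\rho$ and $\rho'$ are both perturbation radii, they are in particular uniformly continuous (by \fref{lem:UCRed}, or directly from the definition), and hence so is $\rho' \circ \rho$: a composition of a $\delta$-respecting map with a $\delta'$-respecting map respects the composed modulus, so ultra-products of the relevant families still exist and the closed-set condition is preserved. This handles the uniform-continuity half of what is needed, but the cleanest route is to verify clause (3) of \fref{prp:PertRadResp} directly for $\rho' \circ \rho$.

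So suppose $M, N \models T$, $\bar a \in M^n$, $\bar b \in N^n$, and $\tp(\bar b) \in \tp(\bar a)^{\rho' \circ \rho}$. By definition of composition there is a type $r(\bar x) \in \tS_n(T)$ with $(\tp(\bar a), r) \in \rho_n$ and $(r, \tp(\bar b)) \in \rho'_n$. Realise $r$ by some tuple $\bar c$ in a model $L \models T$. Now apply clause (3) of \fref{prp:PertRadResp} for the perturbation radius $\rho$ to the data $M, L, \bar a, \bar c$: since $\tp(\bar c) = r \in \tp(\bar a)^\rho$, there is an elementary extension $L' \succeq L$ and a $\rho$-perturbation $f \colon M \to L'$ with $f(\bar a) = \bar c$. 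Since $L' \succeq L$ we still have $\tp^{L'}(\bar c) = r$ and hence $\tp(\bar b) \in \tp^{L'}(\bar c)^{\rho'}$, so apply clause (3) again, this time for $\rho'$, to the data $L', N, \bar c, \bar b$: there is $N' \succeq N$ and a $\rho'$-perturbation $g \colon L' \to N'$ with $g(\bar c) = \bar b$. Then $g \circ f \colon M \to N'$ is, by the composition mapping $\Pert_\rho(M,L') \times \Pert_{\rho'}(L',N') \to \Pert_{\rho' \circ \rho}(M,N')$ noted after the definition of composition, a $(\rho' \circ \rho)$-perturbation sending $\bar a$ to $\bar b$. This is exactly clause (3) for $\rho' \circ \rho$, so by \fref{prp:PertRadResp} it is a perturbation radius.

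One small point to check along the way is that $\rho' \circ \rho$ is genuinely a perturbation pre-radius to begin with — that each $(\rho' \circ \rho)_n$ is closed and contains the diagonal. The diagonal is immediate since both $\rho_n$ and $\rho'_n$ contain it. Closedness follows because $(\rho' \circ \rho)_n$ is the image of the closed set $\{(p, r, q) : (p,r) \in \rho_n,\ (r,q) \in \rho'_n\} \subseteq \tS_n(T)^3$ under the (continuous, hence closed, by compactness of $\tS_n(T)$) projection to the first and third coordinates. I expect the only place demanding any care is the bookkeeping with elementary extensions: one must make sure that passing from $L$ to $L'$ does not disturb $\tp(\bar c)$ and that the second application of clause (3) is legitimate over $L'$ rather than $L$; but since elementary extensions preserve types and $\rho'$-neighbourhoods are defined purely in terms of types, this is routine. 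No genuine obstacle is anticipated — the real content has already been packaged into \fref{prp:PertRadResp}.
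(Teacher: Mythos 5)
Your proof is correct and takes essentially the same route as the paper's: decompose the pair $(p,q) \in (\rho'\circ\rho)_n$ through an intermediate type $r$, produce a $\rho$-perturbation carrying a realisation of $p$ onto one of $r$ and a $\rho'$-perturbation carrying that onto one of $q$, and compose. The paper verifies the raw definition of perturbation radius in two lines where you verify clause (3) of Proposition~\ref{prp:PertRadResp}, but these are interchangeable given that proposition; your extra checks (that $\rho'\circ\rho$ is a genuine pre-radius via closedness of the projection, and the elementary-extension bookkeeping) are fine and are simply suppressed in the paper's terse version. One small remark: the composition mapping stated after the definition in the paper reads $\Pert_\rho(M,N)\times\Pert_{\rho'}(N,L)\to\Pert_{\rho\circ\rho'}(M,L)$, which is a typo for $\Pert_{\rho'\circ\rho}(M,L)$; you have silently corrected this in citing it, which is the right thing to do.
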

\begin{proof}
  Assume that $q \in p^{\rho' \circ \rho}$.
  Then there is a type $r \in p^\rho$ such that $q \in r^{\rho'}$.
  Let $\bar a \models p$ in $M$.
  Then there is a model $N$ and $\rho$-perturbation
  $f\colon M \to N$ such that $f(\bar a) \models r$, and a
  $\rho'$-perturbation
  $g\colon N \to L$ such that $g \circ f(\bar a) \models q$.
\end{proof}

Recall from \cite{BenYaacov:PositiveModelTheoryAndCats} that the
\emph{type-space functor}
of $T$ is a contra-variant functor from $\bN$ to topological spaces,
sending an object $n \in \bN$ to $\tS_n(T)$, and a mapping
$\sigma\colon n \to m$ to the mapping
\begin{gather*}
  \begin{array}{cccc}
    \sigma^* \colon & \tS_m(T) & \to & \tS_n(T) \\
    & \tp(a_i\colon i<m) & \mapsto & \tp(a_{\sigma(i)}\colon i<n).
  \end{array}
\end{gather*}

We obtain the following elegant characterisation of perturbation radii:
\begin{lem}
  \label{lem:PertRadFunct}
  A perturbation pre-radius $\rho$ is a perturbation radius
  if and only if for every $n,m \in \bN$
  and mapping $\sigma\colon n \to m$, the induced mapping
  $\sigma^* \colon \tS_m(T) \to \tS_n(T)$ satisfies
  that for all $p \in \tS_m(T)$:
  $$\sigma^*(p^\rho) = \sigma^*(p)^\rho.$$
  Viewing $\rho$ as the family of graphs of multi-valued mappings, we could
  write this property more simply as $\sigma^* \circ \rho_m = \rho_n \circ \sigma^*$.
  Thus a perturbation pre-radius is a perturbation radius
  if and only if it commutes with the type-space functor structure
  on $\{\tS_n(T)\colon n \in \bN\}$.
\end{lem}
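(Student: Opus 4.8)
The plan is to show that the functoriality condition $\sigma^* \circ \rho_m = \rho_n \circ \sigma^*$ is equivalent to the combination of properties in \fref{prp:PertRadResp}(2), namely respecting $=$, respecting $\exists$, and permutation-invariance. Once this translation is done, the result follows from the equivalence (1) $\Leftrightarrow$ (2) already established in \fref{prp:PertRadResp}. The key observation is that every map $\sigma\colon n \to m$ in the category $\bN$ factors as a composition of maps of a few basic types, so it suffices to verify the functoriality identity on these generators; and each generator corresponds to one of the three named properties.

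First I would recall the standard fact that any $\sigma\colon n \to m$ factors into: (a) permutations (bijections $m \to m$), (b) the coface/insertion maps, i.e.\ inclusions $\iota\colon n \hookrightarrow n+1$ whose dual $\iota^*\colon \tS_{n+1}(T) \to \tS_n(T)$ is the projection forgetting the last variable, and (c) codegeneracy/diagonal maps $n+1 \to n$ whose dual duplicates a variable. Since the functoriality identity $\sigma^* \circ \rho = \rho \circ \sigma^*$ is clearly preserved under composition of the $\sigma$'s (this is a routine diagram chase using that $\rho$ is a family of multi-valued maps and $(\tau\sigma)^* = \sigma^*\tau^*$), it is enough to check it for generators of each of the three kinds. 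For permutations, the identity $\sigma^*(p^\rho) = \sigma^*(p)^\rho$ is exactly the statement that $\rho_n$ is $\sigma$-invariant, i.e.\ permutation-invariance. For the projection maps, $\iota^*(p^\rho) = \iota^*(p)^\rho$ unwinds, via the lemma characterising ``$\rho$ respects $\exists$'' in the excerpt, to precisely the condition that $\rho$ respects the existential quantifier: $\tp(\bar b) \in \tp(\bar a)^\rho$ iff there is $d$ with $\tp(\bar b d) \in \tp(\bar a c)^\rho$. For the diagonal maps, the relevant instance forces that if $(p,q) \in \rho_2$ and $p \models x = y$ then $q \models x = y$, which is respecting equality (and, more generally, respecting equality combined with the other two properties is what pins down the diagonal maps, since a variable-duplication is the dual of composing a projection with a ``diagonal into the diagonal'' which is governed by $[x=y]^\rho = [x=y]$).

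For the converse direction, given that $\rho$ is a perturbation radius, I would invoke \fref{prp:PertRadResp}(2) to get all three properties, and then reassemble the general functoriality identity from the generator cases as above. For the forward direction, given the functoriality identity, specialising to permutations gives permutation-invariance, specialising to the projection $\iota\colon n \hookrightarrow n+1$ gives that $\rho$ respects $\exists$, and specialising to the diagonal map $2 \to 1$ gives that $\rho$ respects $=$; then \fref{prp:PertRadResp} yields that $\rho$ is a perturbation radius.

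The main obstacle I anticipate is the diagonal/codegeneracy case: checking that the functoriality identity for variable-duplication maps is genuinely equivalent to ``respects $=$'' (not something stronger) requires being careful, because $\sigma^*$ for such a $\sigma$ is not surjective — its image is a closed subset of $\tS_{n+1}(T)$ cut out by equalities between coordinates — so one must argue that the identity $\sigma^*(p^\rho) = \sigma^*(p)^\rho$ only ever constrains $\rho$ on that subspace, and that this constraint, together with permutation-invariance and respecting $\exists$ which one may assume already in hand when checking the composition factorisation, reduces exactly to $[x=y]^\rho = [x=y]$. Everything else is a matter of carefully unwinding definitions and a straightforward diagram chase; I would keep the write-up short and lean on \fref{prp:PertRadResp} rather than redoing its work.
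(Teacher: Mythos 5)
Your backward direction (functoriality for all $\sigma$ implies $\rho$ is a perturbation radius) matches the paper exactly: specialise the identity to permutations, to the inclusions $n\hookrightarrow n+1$, and to the unique map $2\to 1$, read off the three properties of \fref{prp:PertRadResp}(2), and conclude. Where you diverge is the forward direction. The paper does not factor $\sigma$ into generators at all; it uses clause (3) of \fref{prp:PertRadResp} — the lifting property ``$\tp(\bar b)\in\tp(\bar a)^\rho$ iff some $\rho$-perturbation $g:M\to N'$ sends $\bar a$ to $\bar b$'' — to observe that, for arbitrary $\sigma:n\to m$ and $\bar a\models p$ in $M$, each of $q\in\sigma^*(p^\rho)$ and $q\in\sigma^*(p)^\rho$ is equivalent to the existence of a $\rho$-perturbation $g$ with $q=\tp(g(a_{\sigma(i)})\colon i<n)$. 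This is a two-line argument that handles all $\sigma$ at once. Your route instead passes through clause (2), factors $\sigma$ as a composite of permutations, face maps and degeneracies, checks the identity on each generator, and then recomposes by a diagram chase. This is correct but strictly more work, and it forces you to confront exactly the wrinkle you flag: the degeneracy case does not reduce to ``respects $=$'' alone. To be precise, given $\tp(b)\in\tp(a)^\rho$, you get $\tp(bd)\in\tp(aa)^\rho$ for some $d$ by ``respects $\exists$'', and then $d=b$ by ``respects $=$''; this combination is what establishes $\sigma^*(p^\rho)\subseteq\sigma^*(p)^\rho$ for $\sigma:2\to1$. Your parenthetical gestures at this but should be made explicit if you take this route. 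On balance the paper's use of \fref{prp:PertRadResp}(3) buys a much shorter forward argument and avoids the generator-by-generator bookkeeping, at no real cost, so I would recommend it; but your factorisation argument does go through once the degeneracy step is spelled out.
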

\begin{proof}
  Assume that $\rho$ is a perturbation radius, and let
  $\sigma\colon n \to m$ be a mapping.
  Let $p \in \tS_m(T)$, $q \in \tS_n(T)$, and let $a_{<m} \in M$ realise
  $p$.
  Then each of $q \in \sigma^*(p^\rho)$ and
  $q \in \sigma^*(p)^\rho$ is equivalent to the existence of
  a $\rho$-perturbation $g\colon M \to N$ such that
  $q = \tp(g(a_{\sigma(i)})\colon i < n)$.

  Conversely, assume that $\sigma^*(p^\rho) = \sigma^*(p)^\rho$ for all
  $\sigma\colon n\to m$ and $p \in \tS_m(T)$.
  When restricted to
  the special case where $\sigma\colon 2\to 1$ is the unique such mapping,
  this is equivalent to $\rho$ preserving equality;
  when restricted to the family of inclusions $n \hookrightarrow n+1$, this is
  equivalent to $\rho$ preserving $\exists$;
  and when restricted to the permutations of the natural numbers, this
  is equivalent to $\rho$ being permutation-invariant.
  Therefore $\rho$ is a perturbation radius by
  \fref{prp:PertRadResp}.
\end{proof}

\begin{dfn}
  We say that a perturbation radius (or pre-radius) is
  \emph{symmetric} if $q \in p^\rho \Longleftrightarrow p \in q^\rho$.
\end{dfn}

\begin{lem}
  \label{lem:SymPertExt}
  Assume that $\rho$ is a symmetric perturbation radius,  and let
  $f \in \Pert_\rho(M,N)$.
  Then there exist elementary extensions
  $M' \succeq M$, $N' \succeq N$, and a bi-perturbation
  $f' \in \BiPert(M',N')$ extending $f$.
\end{lem}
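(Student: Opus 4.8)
The plan is to reduce to the construction of a \emph{bijective} total $\rho$-perturbation between elementary extensions, the main tool being an ``extension lemma'' for partial $\rho$-perturbations, and then to run a back-and-forth. For the reduction, first note that since $\rho$ is a perturbation radius it respects equality (\fref{prp:PertRadResp}), and together with symmetry this makes every $\rho$-perturbation $g$ injective: if $g(a) = g(a')$ then $\tp(g(a),g(a')) \models x=y$, so by symmetry $\tp(a,a') \in \tp(g(a),g(a'))^\rho \subseteq [x=y]^\rho = [x=y]$, whence $a=a'$. (In particular inverses of $\rho$-perturbations are genuine partial functions.) Moreover, if $f'\colon M' \to N'$ is a bijective total $\rho$-perturbation, then for every $\bar b \in N'$ symmetry of $\rho$ gives $\tp\bigl((f')^{-1}(\bar b)\bigr) \in \tp(\bar b)^\rho$, so $(f')^{-1}$ is automatically a $\rho$-perturbation and $f' \in \BiPert_\rho(M',N')$. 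It therefore suffices to produce elementary extensions $M' \succeq M$, $N' \succeq N$ and a bijective total $\rho$-perturbation $f'\colon M' \to N'$ with $f' \supseteq f$.

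The extension lemma is: every partial $\rho$-perturbation $h\colon M \dashrightarrow N$ extends to a total $\rho$-perturbation $h'\colon M \to N''$ for some $N'' \succeq N$. This is the analogue of \fref{prp:PertRadResp}(3) with an arbitrary partial perturbation in place of a single finite tuple, and it admits essentially the same proof: one repeats the argument establishing (2)$\Rightarrow$(3) in \fref{prp:PertRadResp}, now letting $I$ be the set of all finite partial $\rho$-perturbations from $M$ into a fixed sufficiently saturated $N^* \succeq N$ that agree with $h$ on their domains, and putting $h' = \prod_{g \in I} g/\sU$ for a suitable ultrafilter $\sU$ on $I$ (for the filter generated by the sets $J_{\bar c}=\{g\in I\colon \bar c\subseteq\dom g\}$ to be proper one argues exactly as before, using that $\rho$ respects $=$ and $\exists$ and is permutation-invariant). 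Alternatively and more directly: by compactness it is enough to realise, in a single elementary extension of $N$, the conditions ``$\tp(h'(a)\colon a \in S) \in \tp(a\colon a \in S)^\rho$'' indexed by the finite subsets $S \subseteq M$; by \fref{lem:PertRadFunct} the condition attached to $S$ implies those attached to all $S' \subseteq S$, so it suffices to realise each single such condition, and each is realisable in an elementary extension of $N$ by applying the characterisation of ``$\rho$ respects $\exists$'' once per element of $S$ outside $\dom(h)$, starting from the fact that $h$ is a partial $\rho$-perturbation. Applying this to $h$ lets us enlarge the codomain so as to make the domain all of $M$; applying it to $h^{-1}$ (a partial $\rho$-perturbation, by symmetry) and taking inverses lets us enlarge the domain so as to make the range all of $N$.

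Now build elementary chains $M = M_0 \preceq M_1 \preceq \cdots$ and $N = N_0 \preceq N_1 \preceq \cdots$ together with an increasing chain of partial $\rho$-perturbations $h_n\colon M_n \dashrightarrow N_n$ with $h_0 = f$, arranging that $h_{2k}$ is total on $M_{2k}$ for every $k$. At stage $2k$, apply the extension lemma to $h_{2k}^{-1}$ to obtain $M_{2k+1} \succeq M_{2k}$ and a total $\rho$-perturbation $N_{2k} \to M_{2k+1}$ extending $h_{2k}^{-1}$; let $h_{2k+1}$ be its (well-defined) inverse and set $N_{2k+1} = N_{2k}$, so that $h_{2k+1} \supseteq h_{2k}$ and the range of $h_{2k+1}$ is all of $N_{2k+1}$. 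At stage $2k+1$, apply the extension lemma to $h_{2k+1}$ to obtain $N_{2k+2} \succeq N_{2k+1}$ and $h_{2k+2} \supseteq h_{2k+1}$ total on $M_{2k+1}$, and set $M_{2k+2} = M_{2k+1}$. Let $M'$ and $N'$ be the metric completions of $\bigcup_n M_n$ and $\bigcup_n N_n$; these are elementary extensions of $M$ and $N$. The map $\bigcup_n h_n$ is a $\rho$-perturbation defined on all of $\bigcup_n M_n$ (domains were made total cofinally often) and onto $\bigcup_n N_n$ (ranges were exhausted cofinally often); since $\rho$ is uniformly continuous (\fref{lem:UCRed}) and consists of closed sets, $\bigcup_n h_n$ extends to a $\rho$-perturbation $f'\colon M' \to N'$ (cf.\ the discussion preceding \fref{lem:UCRed}), which is bijective (apply the same to its inverse) and extends $f$. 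By the first paragraph $f' \in \BiPert_\rho(M',N')$. The main obstacle is the extension lemma, i.e.\ upgrading \fref{prp:PertRadResp}(3) from finite tuples to arbitrary partial perturbations; after that the back-and-forth is routine, the only mild subtlety being the passage to metric completions at the end.
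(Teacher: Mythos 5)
Your proof is correct and takes essentially the same back-and-forth approach as the paper's: symmetry makes $f^{-1}$ a partial $\rho$-perturbation, which is then extended to a total one, alternating sides and passing to the limit. The paper's version is terser and leaves implicit the two points you spell out — the extension lemma (upgrading \fref{prp:PertRadResp}(3) from a single finite tuple to an arbitrary partial perturbation) and the passage to metric completions at the limit — so your added detail is welcome but not a different method.
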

\begin{proof}
  Since $\rho$ is symmetric then 
  $f^{-1}\colon f(M) \to M$ is a partial $\rho$-perturbation,
  and since $\rho$ is a perturbation radius,
  we may extend $f^{-1}$ to a
  $\rho$-perturbation $g\colon N \to M' \succeq M$.
  Proceeding this way
  we may thus construct two elementary chains $(M_i\colon i \in \bN)$ and
  $(N_i\colon i \in \bN)$ such that $M_0 = M$, $N_0 = N$, and two sequences of
  $\rho$-perturbations $f_i\colon M_i \to N_i$ and $g_i\colon N_i \to M_{i+1}$ such that
  $f_0 = f$, $g_i\circ f_i = \id_{M_i}$, and $f_{i+1}\circ g_i = \id_{N_i}$.
  Then at the limit we obtain $M_\omega \succeq M$ and $N_\omega \succeq N$, $\rho$-perturbations
  $f_\omega\colon M_\omega \to N_\omega$ and $g_\omega\colon N_\omega \to M_\omega$ such that $g_\omega = f_\omega^{-1}$.
  Thus every $\rho$-perturbation can be extended by a back-and-forth 
  argument to a $\rho$-bi-perturbation $f_\omega \in \BiPert_\rho(M_\omega,N_\omega)$.
\end{proof}

\begin{lem}
  \label{lem:UCBiRed}
  A perturbation pre-radius $\rho$ is a symmetric
  perturbation radius if and only
  if it is uniformly continuous and bi-reduced.
\end{lem}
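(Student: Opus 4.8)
The plan is to reduce both directions to Lemmas \ref{lem:UCRed} and \ref{lem:SymPertExt}, so that the only new content is about bi-reducedness. Recall that $\rho$ is bi-reduced exactly when $\rho \leq \rho'$ for every perturbation pre-radius $\rho' \approx \rho$ (since $\llbracket\rho\rrbracket = \bigcap\{\rho'\colon \rho'\approx\rho\}$ and always $\llbracket\rho\rrbracket\leq\rho$), and that bi-reduced implies reduced, as noted after the lemma introducing $\langle\cdot\rangle$ and $\llbracket\cdot\rrbracket$.

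\emph{Forward direction.} Suppose $\rho$ is a symmetric perturbation radius. By \fref{lem:UCRed} it is already uniformly continuous and reduced, so it remains to prove that it is bi-reduced. Let $\rho' \approx \rho$ and let $(p,q) \in \rho_n$. Since $\rho$ is a perturbation radius there are $M,N \models T$ and $f \in \Pert_\rho(M,N)$ with $f(\bar a) = \bar b$ for some $\bar a \models p$ in $M$ and $\bar b \models q$ in $N$. Since $\rho$ is symmetric, \fref{lem:SymPertExt} extends $f$ to some $f' \in \BiPert_\rho(M',N')$ with $M'\succeq M$, $N' \succeq N$. Now $f'(\bar a) = \bar b$ still, and elementary extensions preserve types, so $\tp^{M'}(\bar a) = p$ and $\tp^{N'}(\bar b) = q$. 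Because $\rho \approx \rho'$ we have $f' \in \BiPert_{\rho'}(M',N') \subseteq \Pert_{\rho'}(M',N')$, and applying the $\rho'$-perturbation condition to $\bar a$ yields $q \in p^{\rho'}$, i.e.\ $(p,q) \in \rho'_n$. Hence $\rho \leq \rho'$ for every $\rho' \approx \rho$, so $\rho$ is bi-reduced.

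\emph{Converse.} Suppose $\rho$ is uniformly continuous and bi-reduced. Then $\rho$ is reduced, so by \fref{lem:UCRed} it is a perturbation radius; it remains to show it is symmetric. Consider the family $\rho_s$ defined by $(\rho_s)_n = \{(p,q) \in \rho_n\colon (q,p) \in \rho_n\}$: each $(\rho_s)_n$ is the intersection of $\rho_n$ with its image under the (homeomorphic) swap of coordinates, hence closed, and it contains the diagonal, so $\rho_s$ is a symmetric perturbation pre-radius with $\rho_s \leq \rho$. The key claim is that $\rho_s \approx \rho$. Indeed $\BiPert_{\rho_s}(M,N) \subseteq \BiPert_\rho(M,N)$ is immediate from $\rho_s \leq \rho$, and conversely if $f \in \BiPert_\rho(M,N)$ then for every $\bar a \in M$ both $(\tp^M(\bar a),\tp^N(f\bar a)) \in \rho_n$ (because $f$ is a $\rho$-perturbation) and $(\tp^N(f\bar a),\tp^M(\bar a)) \in \rho_n$ (because $f^{-1}$ is a $\rho$-perturbation), so $(\tp^M(\bar a),\tp^N(f\bar a)) \in (\rho_s)_n$; thus $f \in \Pert_{\rho_s}(M,N)$, and symmetrically $f^{-1} \in \Pert_{\rho_s}(N,M)$, so $f \in \BiPert_{\rho_s}(M,N)$. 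Hence $\rho_s \approx \rho$, and since $\rho$ is bi-reduced, $\rho = \llbracket\rho\rrbracket \leq \rho_s \leq \rho$, so $\rho = \rho_s$ is symmetric.

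I expect the only delicate point to be the forward direction's use of \fref{lem:SymPertExt}: one must check that passing to the elementary extensions $M',N'$ does not break the matching of the chosen realisations of $p$ and $q$, which is exactly why we need $f'$ to \emph{extend} $f$ and why we use that types are preserved under elementary extension. The rest is bookkeeping with the definitions of $\sim$, $\approx$ and $\llbracket\cdot\rrbracket$, and the construction of $\rho_s$ as $\rho$ intersected with its transpose.
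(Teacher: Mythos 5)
Your proof is correct and follows essentially the same route as the paper: both directions reduce to \fref{lem:UCRed} and \fref{lem:SymPertExt}, and the forward direction is the same argument the paper gives (phrased in terms of pairs $(p,q)$ rather than in terms of $\Pert_\rho$-sets). The one small difference is in the converse: the paper simply asserts that bi-reduced implies symmetric, whereas you prove it explicitly via the auxiliary pre-radius $\rho_s = \rho$ intersected with its transpose, showing $\rho_s \approx \rho$ and invoking minimality of $\llbracket\rho\rrbracket$ --- a clean way of spelling out the step the paper leaves to the reader.
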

\begin{proof}
  If $\rho$ is bi-reduced then it is reduced and symmetric, so one
  direction is by \fref{lem:UCRed}.
  For the other, assume $\rho$ is a symmetric perturbation radius.
  Let $f \in \Pert_\rho(M,N)$, and let
  $f' \in \BiPert_\rho(M',N')$ extend it as in \fref{lem:SymPertExt}.
  Then $f' \in \BiPert_{\llbracket\rho\rrbracket}(M',N')$ by definition, whereby
  $f' \in \Pert_{\llbracket\rho\rrbracket}(M',N')$ and $f \in \Pert_{\llbracket\rho\rrbracket}(M,N)$.
  Therefore $\llbracket\rho\rrbracket \sim \rho$, and as both are reduced they are equal.
\end{proof}

\begin{prp}
  \label{prp:SymPertRadGen}
  A perturbation pre-radius $\rho$ is bi-equivalent to a
  symmetric perturbation radius
  if and only if it is uniformly continuous, in which case
  $\llbracket\rho\rrbracket$ is the unique symmetric perturbation radius bi-equivalent to
  $\rho$.
\end{prp}
\begin{proof}
  Immediate from \fref{lem:UCBiRed}.
\end{proof}

Finally, we may find the following observation useful:
\begin{lem}
  \label{lem:SymPertRadGenComp}
  Let $\rho^i$ be symmetric uniformly continuous
  perturbation pre-radii such that $\rho^1 \circ \rho^0 \leq \rho^2$.
  Then $\llbracket\rho^1\rrbracket \circ \llbracket\rho^0\rrbracket \leq \llbracket\rho^2\rrbracket$.
\end{lem}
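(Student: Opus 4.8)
The plan is to reduce the statement to a back-and-forth argument packaged inside \fref{lem:UCBiRed} and \fref{lem:SymPertExt}, rather than trying to manipulate the closed sets $\llbracket\rho^i\rrbracket_n$ directly. The key observation is that, by \fref{prp:SymPertRadGen}, each $\llbracket\rho^i\rrbracket$ is a symmetric perturbation radius bi-equivalent to $\rho^i$, and in particular (being symmetric perturbation radii) they are uniformly continuous and bi-reduced. Since $\llbracket\rho^2\rrbracket$ is bi-reduced, to prove $\llbracket\rho^1\rrbracket\circ\llbracket\rho^0\rrbracket\leq\llbracket\rho^2\rrbracket$ it suffices, by \fref{lem:Comp} (the composition of perturbation radii is a perturbation radius) together with the minimality property defining $\llbracket\cdot\rrbracket$, to show that $\BiPert_{\llbracket\rho^1\rrbracket\circ\llbracket\rho^0\rrbracket}(M,N)\subseteq\BiPert_{\llbracket\rho^2\rrbracket}(M,N)$ for all $M,N\models T$; equivalently, that a bi-perturbation obtained by composing a $\llbracket\rho^0\rrbracket$-bi-perturbation with a $\llbracket\rho^1\rrbracket$-bi-perturbation is a $\llbracket\rho^2\rrbracket$-bi-perturbation.

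First I would unwind the hypothesis $\rho^1\circ\rho^0\leq\rho^2$ at the level of perturbations: if $f\in\Pert_{\rho^0}(M,N)$ and $g\in\Pert_{\rho^1}(N,L)$, then by the composition map for perturbations $g\circ f\in\Pert_{\rho^1\circ\rho^0}(M,L)\subseteq\Pert_{\rho^2}(M,L)$, and dually for inverses when $f,g$ are bijective and $\rho^0,\rho^1$ are symmetric. Next, given $h\in\BiPert_{\llbracket\rho^0\rrbracket}(M,N)$ and $k\in\BiPert_{\llbracket\rho^1\rrbracket}(N,L)$, I would pass to elementary extensions using \fref{lem:SymPertExt} to realise $h$ and $k$ as restrictions of bi-perturbations that are "built from" actual $\rho^0$- and $\rho^1$-perturbations in the sense of \fref{lem:UCBiRed}; concretely, since $\llbracket\rho^i\rrbracket\approx\rho^i$, the bi-perturbations $h$ and $k$ already lie in $\BiPert_{\rho^0}$ and $\BiPert_{\rho^1}$ respectively, so $k\circ h$ and its inverse $h^{-1}\circ k^{-1}$ are $\rho^2$-perturbations by the previous step. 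Hence $k\circ h\in\BiPert_{\rho^2}(M,L)\subseteq\BiPert_{\llbracket\rho^2\rrbracket}(M,L)$, which is what we needed.

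The one point requiring care — and the main obstacle — is the passage from the statement about bi-perturbations of models back to the inclusion of pre-radii $\llbracket\rho^1\rrbracket\circ\llbracket\rho^0\rrbracket\leq\llbracket\rho^2\rrbracket$ on type spaces. A pair $(p,q)\in(\llbracket\rho^1\rrbracket\circ\llbracket\rho^0\rrbracket)_n$ comes with an intermediate type $r$ with $(p,r)\in\llbracket\rho^0\rrbracket_n$ and $(r,q)\in\llbracket\rho^1\rrbracket_n$; one must produce a \emph{single} chain of models and (bi-)perturbations realising the passage $p\rightsquigarrow r\rightsquigarrow q$ so as to witness $(p,q)\in\llbracket\rho^2\rrbracket_n$. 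This is exactly where the symmetry and uniform continuity enter: by \fref{prp:PertRadResp}(1)$\Leftrightarrow$(3) applied to the perturbation radius $\llbracket\rho^0\rrbracket$ I obtain $\bar a\models p$ in some $M$ and a $\llbracket\rho^0\rrbracket$-perturbation into some $N$ sending $\bar a$ to a realisation $\bar b\models r$; applying \fref{lem:SymPertExt} promotes this to a bi-perturbation after moving to elementary extensions, and likewise for the $\llbracket\rho^1\rrbracket$-step from $r$ to $q$, after which the composition argument above closes the gap. Keeping track of the elementary extensions so that both steps take place over a common model is the only genuinely fiddly part; everything else is bookkeeping with \fref{lem:Comp}, \fref{lem:UCBiRed} and the defining minimality of $\llbracket\cdot\rrbracket$.
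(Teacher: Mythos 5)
Your overall approach mirrors the paper's: decompose $\llbracket\rho^1\rrbracket\circ\llbracket\rho^0\rrbracket$ into a $\llbracket\rho^0\rrbracket$-step followed by a $\llbracket\rho^1\rrbracket$-step, promote the resulting perturbations to bi-perturbations, compose, and close using symmetry of $\rho^2$ to land inside $\BiPert_{\rho^2}=\BiPert_{\llbracket\rho^2\rrbracket}$. One genuine simplification relative to the paper: you work type-pair by type-pair, realising the intermediate $r$ directly via \fref{prp:PertRadResp}, whereas the paper starts from an arbitrary $f\in\Pert_{\llbracket\rho^1\rrbracket\circ\llbracket\rho^0\rrbracket}(M_0,M_1)$ and must first glue the finite decompositions by an ultra-product argument to obtain a global decomposition $f'=h'\circ g'$. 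Your shortcut is sound because $\llbracket\rho^1\rrbracket\circ\llbracket\rho^0\rrbracket$ is a perturbation radius (\fref{lem:Comp}), so every pair in it is realised by a perturbation and the type-level inclusion follows once that perturbation is shown to be (after extension) a $\llbracket\rho^2\rrbracket$-perturbation.

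The soft spot is exactly the ``fiddly part'' you flag, and it is slightly more than bookkeeping. Applying \fref{lem:SymPertExt} to the two halves \emph{separately} gives $f'\in\BiPert_{\llbracket\rho^0\rrbracket}(M',N')$ and $g'\in\BiPert_{\llbracket\rho^1\rrbracket}(N'',L')$, with $N'$ and $N''$ two \emph{independently chosen} elementary extensions of the middle model $N_1$. After amalgamating them into a common $N^*\succeq N',N''$, the map $f'$ is no longer surjective onto $N^*$ and $g'$ is no longer defined on all of $N^*$, so neither remains a bi-perturbation with middle model $N^*$; repairing this requires a further back-and-forth, which again outgrows $N^*$, and so on. This is why the paper does not invoke \fref{lem:SymPertExt} off the shelf but runs a back-and-forth ``\emph{as in the proof of} \fref{lem:SymPertExt}'': the three models $M_0''$, $M_1''$ and the middle $M_2''$ must be built in a \emph{single interleaved} $\omega$-chain, extending both maps and both inverses at each step, so the middle model stays synchronised throughout. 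Replacing your two separate applications of \fref{lem:SymPertExt} by this one simultaneous three-model construction closes the proof. Finally, the opening reduction ``by minimality of $\llbracket\cdot\rrbracket$'' should be dropped: minimality gives $\llbracket\rho^2\rrbracket\leq\rho'$ whenever $\rho'\approx\rho^2$, which is the wrong direction for what you want; what actually finishes the argument is that every $(p,q)\in(\llbracket\rho^1\rrbracket\circ\llbracket\rho^0\rrbracket)_n$ is realised by a perturbation which, after the back-and-forth, restricts to a $\llbracket\rho^2\rrbracket$-perturbation witnessing $(p,q)\in\llbracket\rho^2\rrbracket_n$.
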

\begin{proof}
  Let $M_0,M_1 \models T$ and $f \in \Pert_{\llbracket\rho^1\rrbracket \circ \llbracket\rho^0\rrbracket}(M_0,M_1)$.
  Then every restriction of $f$ to a finite set
  can be decomposed by definition
  into a partial $\llbracket\rho^0\rrbracket$-perturbation followed by a
  partial $\llbracket\rho^1\rrbracket$-perturbation.
  We can glue these together by an ultra-product argument
  to obtain $M_i' \succeq M_i$ for $i < 2$ and $M_2' \models T$,
  such that $f$ extends to
  $f' \colon M_0' \to M_1'$, which in turn decomposes into a
  $\llbracket\rho^0\rrbracket$-perturbation $g'\colon M_0' \to M_2'$ followed by
  a $\llbracket\rho^1\rrbracket$-perturbation $h'\colon M_2' \to M_1'$.

  Since $\llbracket\rho^i\rrbracket$ are symmetric perturbation radii,
  we may use a back-and-forth argument as in the proof
  of \fref{lem:SymPertExt}
  to construct extensions
  $M_i'' \succeq M_i'$ for $i < 3$ and
  $g'' \in \BiPert_{\llbracket\rho^0\rrbracket}(M''_0,M''_2)$,
  $h'' \in \BiPert_{\llbracket\rho^1\rrbracket}(M''_2,M''_1)$.
  It follows that
  $f'' = h'' \circ g'' \in \Pert_{\rho^2}(M''_0,M_1'')$ is bijective.
  Since $\rho^2$ is assumed to be symmetric, $f''$ is a
  $\rho^2$-bi-perturbation and therefore a
  $\llbracket\rho^2\rrbracket$-perturbation.

  This shows that $\llbracket\rho^1\rrbracket \circ \llbracket\rho^0\rrbracket \leq \llbracket\rho^2\rrbracket$.
\end{proof}

\subsection{Perturbation systems}

A single perturbation radius gives us certain leverage at perturbing
types.
But our goal is not to study perturbations by a single perturbation
radius,
but rather by ``arbitrarily small'' perturbation radii, where the
notion of a small perturbation radius depends on the context.
We formalise this through the notion of a perturbation system:

Let $\fR^0$ denote the family of perturbation pre-radii, and
$\fR$ denote the family of perturbation radii.

\begin{dfn}
  \label{dfn:PertSys}
  A \emph{perturbation pre-system} is a mapping
  $\fp \colon \bR^+ \to \fR^0$ satisfying:
  \begin{enumerate}
  \item \emph{Downward continuity:} If $\varepsilon_n \searrow \varepsilon$ then $\fp(\varepsilon) = \bigcap \fp(\varepsilon_n)$.
  \item \emph{Symmetry:} $\fp(\varepsilon)$ is symmetric for all $\varepsilon$.
  \item \emph{Triangle inequality:} $\fp(\varepsilon) \circ \fp(\varepsilon') \leq \fp(\varepsilon + \varepsilon')$.
  \item \emph{Strictness:} $\fp(0) = \id$.
  \end{enumerate}
  If in addition its range lies in $\fR$, then
  $\fp\colon \bR^+ \to \fR$ is a \emph{perturbation system}.
\end{dfn}

Given a perturbation (pre-)system $\fp$, we may define the
\emph{perturbation distance} between two types $p,q \in \tS_n(T)$ as:
\begin{gather*}
  d_{\fp,n}(p,q) = d_{\fp}(p,q) = \inf \{\varepsilon \geq 0\colon (p,q) \in \fp_n(\varepsilon)\}.
\end{gather*}
Notice that by strictness and the triangle inequality this is indeed a
$[0,\infty]$-valued metric, where infinite distance means that neither
type can be perturbed into the other.

\begin{lem}
  \label{lem:PertSysMet}
  Let $\fp$ be a perturbation pre-system.
  Then the family of metrics $(d_{\fp,n}\colon n \in \bN)$ has the following
  properties:
  \begin{enumerate}
  \item For every $n$, the set
    $\{(p,q,\varepsilon) \in \tS_n(T)^2 \times \bR^+\colon d_{\fp,n}(p,q) \leq \varepsilon\}$ is closed.
  \item If $\fp$ is a perturbation system, then
    for every $n,m \in \bN$ and mapping $\sigma\colon n \to m$, the induced
    mapping $\sigma^* \colon \tS_m(T) \to \tS_n(T)$ satisfies
    for all $p \in \tS_m(T)$ and $q \in \tS_n(T)$:
    $$d_{\fp,m}(p,(\sigma^*)^{-1}(q)) = d_{\fp,n}(\sigma^*(p),q).$$
    (Here we follow the convention that $d_{\fp,m}(p,\emptyset) = \inf \emptyset = \infty$.)
  \end{enumerate}
  Conversely, given a family of metrics with values in $[0,\infty]$
  satisfying the first property,
  and defining
  $\fp_n(\varepsilon) =
  \{(p,q) \in \tS_n(T)^2\colon d_{\fp,n}(p,q) \leq \varepsilon\}$,
  we obtain that $\fp$ is a perturbation pre-system, and it is a
  perturbation system if and only if
  the second property is satisfied as well.
\end{lem}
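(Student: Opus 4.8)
The plan rests on two standing observations. First, for a perturbation pre-system $\fp$ the assignment $\varepsilon \mapsto \fp(\varepsilon)$ is monotone (immediate from the triangle inequality and from each $\fp_n(\varepsilon)$ containing the diagonal), and combining monotonicity with downward continuity shows that the infimum defining $d_{\fp,n}$ is attained, i.e.\ $(p,q) \in \fp_n(\varepsilon) \iff d_{\fp,n}(p,q) \leq \varepsilon$. Second, by \fref{lem:PertRadFunct} a perturbation pre-radius $\rho$ is a perturbation radius precisely when $\sigma^*(p^\rho) = \sigma^*(p)^\rho$ for every $\sigma\colon n \to m$ and every $p \in \tS_m(T)$.

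For the first property I would work directly with the topology: given a net $(p_i,q_i,\varepsilon_i)$ in the set converging to $(p,q,\varepsilon)$, fix $\varepsilon' > \varepsilon$; then $\varepsilon_i \leq \varepsilon'$ eventually, so $(p_i,q_i) \in \fp_n(\varepsilon_i) \subseteq \fp_n(\varepsilon')$ by monotonicity, and since $\fp_n(\varepsilon')$ is a closed subset of $\tS_n(T)^2$ we get $(p,q) \in \fp_n(\varepsilon')$; letting $\varepsilon' \searrow \varepsilon$ and invoking downward continuity yields $(p,q) \in \fp_n(\varepsilon)$, i.e.\ $d_{\fp,n}(p,q) \leq \varepsilon$. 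For the second property, assume $\fp$ is a perturbation system. Unwinding the definition of the distance from a point to a set, $d_{\fp,m}(p,(\sigma^*)^{-1}(q))$ is the infimum of those $\varepsilon$ for which there is some $p'$ with $\sigma^*(p') = q$ and $p' \in p^{\fp(\varepsilon)}$, equivalently of those $\varepsilon$ with $q \in \sigma^*(p^{\fp(\varepsilon)})$; by \fref{lem:PertRadFunct} this is the condition $q \in \sigma^*(p)^{\fp(\varepsilon)}$, i.e.\ $d_{\fp,n}(\sigma^*(p),q) \leq \varepsilon$, and taking the infimum over such $\varepsilon$ gives $d_{\fp,n}(\sigma^*(p),q)$. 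The case of empty $(\sigma^*)^{-1}(q)$ agrees with the stated convention, since then $q$ is not in the range of $\sigma^*$, so $q \notin \sigma^*(p)^{\fp(\varepsilon)}$ for every $\varepsilon$ and both sides are $\infty$.

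For the converse, start from a family of $[0,\infty]$-valued metrics with the first property and set $\fp_n(\varepsilon) = \{(p,q)\colon d_{\fp,n}(p,q) \leq \varepsilon\}$. Each of these is closed, being the preimage of the closed set from the first property under the continuous map $(p,q) \mapsto (p,q,\varepsilon)$, and contains the diagonal, so $\fp$ takes values in $\fR^0$. The four axioms of \fref{dfn:PertSys} now become elementary facts about the metrics: downward continuity becomes ``$d_{\fp,n}(p,q) \leq \varepsilon_k$ for all $k$ iff $d_{\fp,n}(p,q) \leq \inf_k \varepsilon_k$''; symmetry and the triangle inequality become the corresponding properties of $d_{\fp,n}$; and strictness becomes ``$d_{\fp,n}(p,q) = 0$ iff $p = q$''. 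Hence $\fp$ is a perturbation pre-system.

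It remains to show that $\fp$ is a perturbation system if and only if the second property holds. By \fref{lem:PertRadFunct}, $\fp$ is a perturbation system exactly when $\sigma^*(p^{\fp(\varepsilon)}) = \sigma^*(p)^{\fp(\varepsilon)}$ for all $n,m$, $\sigma\colon n\to m$, $p$ and $\varepsilon$. If this holds, the computation already carried out above shows $d_{\fp,m}(p,(\sigma^*)^{-1}(q)) = d_{\fp,n}(\sigma^*(p),q)$, i.e.\ the second property. For the reverse direction, assume the second property. The inclusion $\sigma^*(p^{\fp(\varepsilon)}) \subseteq \sigma^*(p)^{\fp(\varepsilon)}$ is immediate: if $q = \sigma^*(p')$ with $p' \in p^{\fp(\varepsilon)}$ then $d_{\fp,m}(p,(\sigma^*)^{-1}(q)) \leq d_{\fp,m}(p,p') \leq \varepsilon$, so by the second property $d_{\fp,n}(\sigma^*(p),q) \leq \varepsilon$, i.e.\ $q \in \sigma^*(p)^{\fp(\varepsilon)}$. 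The reverse inclusion $\sigma^*(p)^{\fp(\varepsilon)} \subseteq \sigma^*(p^{\fp(\varepsilon)})$ is the step I expect to be the main obstacle: here $q \in \sigma^*(p)^{\fp(\varepsilon)}$ gives $d_{\fp,n}(\sigma^*(p),q) \leq \varepsilon$, hence $d_{\fp,m}(p,(\sigma^*)^{-1}(q)) \leq \varepsilon$ by the second property, and one must then extract an actual $p' \in (\sigma^*)^{-1}(q)$ with $d_{\fp,m}(p,p') \leq \varepsilon$ — that is, the infimum over $(\sigma^*)^{-1}(q)$ must be attained, not merely approached. I would obtain this from compactness: $(\sigma^*)^{-1}(q)$ is closed, hence compact, in $\tS_m(T)$; each $\{p'\colon d_{\fp,m}(p,p') \leq \varepsilon'\}$ is closed by the first property; and $(\sigma^*)^{-1}(q) \cap \{p'\colon d_{\fp,m}(p,p') \leq \varepsilon'\}$ is nonempty for every $\varepsilon' > \varepsilon$, so as $\varepsilon' \searrow \varepsilon$ these nonempty closed sets have nonempty intersection $(\sigma^*)^{-1}(q) \cap \{p'\colon d_{\fp,m}(p,p') \leq \varepsilon\}$, which yields the desired $p'$. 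Everything else is routine bookkeeping between the $\fp_n(\varepsilon)$-language and the $d_{\fp,n}$-language.
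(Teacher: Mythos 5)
Your proof is correct and follows the same approach as the paper, which simply describes the result as "merely a reformulation" and records the correspondence between the four axioms of a perturbation (pre-)system and the stated properties of the metrics, invoking \fref{lem:PertRadFunct} for the second property exactly as you do. You have usefully fleshed out the two nontrivial points that the paper leaves implicit — that monotonicity plus downward continuity make the infimum defining $d_{\fp,n}$ attained, and the compactness argument extracting an actual $p' \in (\sigma^*)^{-1}(q)$ realising the distance — but the underlying argument is the same.
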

\begin{proof}
  This is merely a reformulation:\\
  -- Symmetry, triangle inequality and strictness correspond to each
  $d_{\fp,n}$ being a metric;\\
  -- Downward continuity corresponds to the set
  $\{(p,q,\varepsilon) \in \tS_n(T)^2 \times \bR^+
  \colon d_{\fp,n}(p,q) \leq \varepsilon\}$
  being closed; and\\
  -- Each of the $\fp(\varepsilon)$ being a perturbation radius
  corresponds to $d_{\fp,m}(p,(f^*)^{-1}(q)) = d_{\fp,n}(f^*(p),q)$,
  by \fref{lem:PertRadFunct}.
\end{proof}

We say that two perturbation systems $\fp$ and $\fp'$ are
\emph{equivalent} if the perturbation metrics $d_{\fp}$ and
$d_{\fp'}$ are uniformly equivalent on each $\tS_n(T)$.

We say that a perturbation pre-system $\fp$
\emph{respects equality}
if $\fp(\varepsilon)$ does for all $\varepsilon > 0$.
In this case, by \fref{prp:SymPertRadGen} we can define
$\llbracket\fp(\varepsilon)\rrbracket
= \llbracket\fp\rrbracket(\varepsilon)$
to be the symmetric perturbation radius generated by $\fp(\varepsilon)$.
By \fref{lem:SymPertRadGenComp}, $\llbracket\fp\rrbracket$ satisfies
the triangle
inequality.
One can verify that $\llbracket\fp\rrbracket$ satisfies downward
continuity, and it is
clearly symmetric and strict, so it is a perturbation system.
As expected, we call $\llbracket\fp\rrbracket$ the perturbation system
\emph{generated} by $\fp$.

\subsection{A few natural examples (and a non-example)}
\label{sec:PertExm}

If $\cL$ consists of finitely many predicate symbols, a natural
perturbation system for $\cL$ is the one allowing to perturb all
symbols by ``a little''.
In order to construct it we first define a perturbation pre-system
$\fp$ by letting $\fp(\varepsilon)$ be the (symmetric) perturbation
pre-radius
allowing the distance symbol $d$ to change by
a multiplicative factor of $e^{\pm\varepsilon}$, and every other symbol to
change by $\pm\varepsilon$.
Then $\fp$ respects equality, and thus generates a perturbation
system $\llbracket\fp\rrbracket$.
Similarly, if $\cL$ is an expansion of $\cL_0$ by finitely many
symbols, we might want to require that all symbols of $\cL_0$
be preserved
precisely, while allowing the new symbols to be perturbed as in the
previous case.

A particularly interesting example of the latter kind is the case of
adding a generic automorphism to a stable continuous theory.
Consider for example the case of infinite dimensional Hilbert spaces:
If $\sigma,\sigma' \in U(H)$, then $(H,\sigma')$ is obtained from
$(H,\sigma)$ by a
small perturbation of the automorphism (which keeps the underlying
Hilbert space unmodified) if and only if the operator norm
$\|\sigma - \sigma'\|$ is small.
Thus the notion of perturbation brings into the realm of model theory
the uniform convergence topology on automorphism groups of
structures.
We shall say a little more about this in the last section.

In case of a classical (i.e., discrete)
first order theory $T$ in a finite
language, there are no non trivial perturbation systems.
Indeed, let $\fp$ be a perturbation system, and
let $P$ be an $n$-ary predicate symbol.
Let
$$X_P = ([P(\bar x)] \times [\neg P(\bar y)])
\cup ([\neg P(\bar x)] \times [P(\bar y)]) \subseteq \tS_n(T)^2.$$
Then $X_P \cap \fp_n(0) = \emptyset$, but $X$ is compact, so there is
$\varepsilon_P > 0$
such that $X_P \cap \fp_n(\varepsilon_P) = \emptyset$.
Replacing function symbols with their graphs we may assume the
language is purely relational, and as we assumed the language to be
finite we have can define
$\varepsilon_0 = \min \{\varepsilon_P\colon P \in \cL\} > 0$.
By the construction every $\fp(\varepsilon_0)$-perturbation is an
elementary
mapping, that is to say that a small enough perturbation, according to
$\fp$, is not a perturbation at all.
Another way of sating this is that $\fp$ is equivalent to the identity
perturbation system.
In short, structures in a finite discrete language cannot really be
perturbed.
The same argument holds if we have a pair of languages
$\cL_0 \subseteq \cL$,
where we only allow to perturb symbols in $\cL \setminus \cL_0$
which are finite in number.

Thus, the notion of perturbation is a new feature of
continuous logic which essentially does not exist in discrete logic.

This last statement is of course not 100\% correct, as there was a
finiteness assumption.
Indeed, let $\cL = \{E_i\colon i \in \bN\}$ and let $T$ be the theory
saying that each $E_i$ is an equivalence relation with two equivalence
classes,
and every intersection of finitely many equivalence classes of
distinct $E_i$'s is infinite.
This is a classical example of a theory which is not
$\aleph_0$-categorical, but every restriction of $T$ to a finite
sub-language is.
For $\varepsilon > 0$, let $\fp(\varepsilon)$ be the symmetric
perturbation radius generated by
requiring $E_i$ to be fixed for all $i < 1/\varepsilon$, and
$\fp(0) = \id$.
Then $\fp$ is a perturbation system, and ``a model of $T$ up to a
small $\fp$-perturbation'' is the same as ``a model of $T$
restricted to a finite sub-language''.
Thus $T$ is $\fp$-$\aleph_0$-categorical in the sense of
\fref{sec:PertOSat} below.

\section{Saturation up to perturbation}
\label{sec:PertOSat}

In this section, $\fp$ denotes a perturbation system for a theory $T$.

\begin{ntn}
  \label{ntn:TypDist}
  If $p(x)$ is any partial type and $\varepsilon \geq 0$ then
  $p(x^\varepsilon)$ denotes the partial type
  \begin{gather*}
    \exists x'\, (p(x') \wedge d(x,x') \leq \varepsilon).
  \end{gather*}

  We define $p(x^\varepsilon,y^\delta,\ldots)$ similarly.
  We follow the convention that the metric on finite tuples is the
  supermum metric, so if $\bar x$ is a (finite) tuple of variables
  then $p(\bar x^\varepsilon)$ means $p(x_0^\varepsilon,x_1^\varepsilon,\ldots)$.
\end{ntn}

This notation can (and will) be used in conjunction with previous
notation.
If $p(\bar x)$ is a partial type and $\rho$ a perturbation radius
then $p^\rho(\bar x)$ is also a partial type, so we can make sense of
$p^\rho(\bar x^\varepsilon)$:
$\bar a \models p^\rho(\bar x^\varepsilon)$ if and only if there
exists a tuple $\bar b$
(in a sufficiently saturated model containing $\bar a$) such that
$d(\bar a,\bar b) \leq \varepsilon$ and $\bar b \models p^\rho$.
Similarly, $\bar a \models p(\bar x^\varepsilon)^\rho$ if and only if there are $\bar b$
and $\bar c$ such that $\bar b \models p$, $d(\bar b,\bar c)\leq \varepsilon$, and
$\tp(\bar a) \in \tp(\bar c)^\varepsilon$.
The difference between the two examples is that in the first we first
perturb $p$ and then allow the realisation to move a little, while in
the second we do it the other way around.
Since $\rho$ is uniformly continuous, this does not make much difference,
as for all $\varepsilon>0$ and $\delta = \delta_\rho(\varepsilon) > 0$:
\begin{align*}
  & [p^\rho(\bar x^\delta)] \subseteq [p(\bar x^\varepsilon)]^\rho, \qquad{} [p(\bar x^\delta)]^\rho \subseteq [p^\rho(\bar x^\varepsilon)].
\end{align*}

\begin{dfn}
  A structure $M$ is \emph{$\fp$-approximately $\aleph_0$-saturated}
  if for every finite tuple $\bar a \in M$, type
  $p(x,\bar a) \in \tS_1(\bar a)$ and $\varepsilon > 0$,
  the partial type
  $p^{\fp(\varepsilon)}(x^\varepsilon,\bar a^\varepsilon)$
  is realised in $M$.
\end{dfn}

Notice that when $b \in M$ realises
$p^{\fp(\varepsilon)}(x^\varepsilon,\bar a^\varepsilon)$,
the witnesses may possibly be \emph{outside} $M$.
In other words,
$M \models p^{\fp(\varepsilon)}(b^\varepsilon,\bar a^\varepsilon)$
only means that there exist
$b',\bar a'$ in some elementary extension of $M$
such that $d(b,b'),d(\bar a,\bar a') \leq \varepsilon$
and $\models p^{\fp(\varepsilon)}(b',\bar a')$.

\begin{lem}
  \label{lem:PertOSatTup}
  The definition of $\fp$-approximate $\aleph_0$-saturation, which was
  given in terms of approximate realisation of $1$-types,
  implies the same property for $n$-types, for any natural $n$.
\end{lem}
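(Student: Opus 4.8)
The plan is to prove the statement by induction on $n$, the base case $n=1$ being the definition. For the inductive step, suppose the property holds for $n$-types, and let $\bar a \in M$ be a finite tuple, $p(\bar x, y, \bar a) \in \tS_{n+1}(\bar a)$ an $(n+1)$-type (with $|\bar x| = n$), and $\varepsilon > 0$. I want to find a realisation in $M$ of $p^{\fp(\varepsilon)}(\bar x^\varepsilon, y^\varepsilon, \bar a^\varepsilon)$. The natural strategy is to peel off the last variable: first approximately realise the $n$-type $q(\bar x, \bar a) = (\exists y\, p)(\bar x, \bar a)$ using the inductive hypothesis with a parameter $\varepsilon' \ll \varepsilon$ to be chosen, obtaining $\bar b \in M$ with $M \models q^{\fp(\varepsilon')}(\bar b^{\varepsilon'}, \bar a^{\varepsilon'})$; then use the $1$-type saturation over the parameter tuple $\bar b\bar a$ to pull down a suitable value for $y$.

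The key steps, in order, are as follows. First, choose $\varepsilon' > 0$ small relative to $\varepsilon$, using the uniform continuity modulus $\delta = \delta_{\fp(\varepsilon)}$ so that the inclusions $[r^{\fp(\varepsilon)}(\bar x^\delta)] \subseteq [r(\bar x^\varepsilon)]^{\fp(\varepsilon)}$ and $[r(\bar x^\delta)]^{\fp(\varepsilon)} \subseteq [r^{\fp(\varepsilon)}(\bar x^\varepsilon)]$ from \fref{ntn:TypDist} can be applied with room to spare; also use the triangle inequality $\fp(\varepsilon') \circ \fp(\varepsilon') \leq \fp(2\varepsilon')$ for the perturbation system $\fp$, and arrange $2\varepsilon' \leq \varepsilon$ as well as $\varepsilon' \leq \delta(\varepsilon)$, say. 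Second, apply the inductive hypothesis to $q(\bar x, \bar a) = [\exists y\, p(\bar x, y, \bar a)]$ with parameter $\varepsilon'$: since $\fp(\varepsilon')$ respects $\exists$ (it is a perturbation radius, by \fref{prp:PertRadResp}), we have $q^{\fp(\varepsilon')} = [\exists y\, p^{\fp(\varepsilon')}]$, so we obtain $\bar b \in M$ and witnesses $\bar b', \bar a'$ in an elementary extension $M_1 \succeq M$ with $d(\bar b, \bar b'), d(\bar a, \bar a') \leq \varepsilon'$ and $M_1 \models \exists y\, p^{\fp(\varepsilon')}(\bar b', y, \bar a')$; fix such a witness $e \in M_1$, so $\tp(\bar b' e \bar a') \in \tp^{M}(\text{some realisation of } p\text{-with-parameters})^{\fp(\varepsilon')}$ — more precisely $\tp(\bar b' e \bar a')$ lies in $p^{\fp(\varepsilon')}$ read as a complete $(n+2)$-type including the parameters. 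Third, push this back inside $M$: the $1$-type $\tp(e / \bar b' \bar a')$ need not be over parameters in $M$, but $d(\bar b'\bar a', \bar b\bar a) \leq \varepsilon'$, so by uniform continuity of the perturbation radius (and the inclusions from \fref{ntn:TypDist}) the type $r(y, \bar b, \bar a) := \tp(e/\bar b\bar a)$ — strictly, a type over the genuine parameters $\bar b\bar a \in M$ obtained by a $\fp(\varepsilon')$-shift and an $\varepsilon'$-move — satisfies that any realisation of $r^{\fp(\varepsilon')}(y^{\varepsilon'}, \bar b^{\varepsilon'}, \bar a^{\varepsilon'})$ in $M$ yields, via composing the perturbations and moves with the triangle inequality, a realisation of $p^{\fp(\varepsilon)}(\bar x^\varepsilon, y^\varepsilon, \bar a^\varepsilon)$ with $\bar x \mapsto \bar b$. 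Fourth, apply the case $n=1$ of $\fp$-approximate $\aleph_0$-saturation to the type $r(y, \bar b, \bar a)$ over the finite parameter tuple $\bar b \bar a \in M$ with parameter $\varepsilon'$, obtaining $c \in M$ realising $r^{\fp(\varepsilon')}(c^{\varepsilon'}, \bar b^{\varepsilon'}, \bar a^{\varepsilon'})$. Finally, check that $\bar b c \in M$ realises $p^{\fp(\varepsilon)}(\bar x^\varepsilon, y^\varepsilon, \bar a^\varepsilon)$ by assembling the two perturbations (total $\fp(\varepsilon') \circ \fp(\varepsilon') \leq \fp(2\varepsilon') \leq \fp(\varepsilon)$) and the two rounds of $\varepsilon'$-moves (total $\leq 2\varepsilon' \leq \varepsilon$, using the supremum metric on tuples as in \fref{ntn:TypDist}), invoking \fref{lem:PertRadFunct} / \fref{lem:Comp} to see that the composite perturbation acts correctly on types and the reduct to the $\bar x y$ coordinates is still controlled.

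The main obstacle I expect is bookkeeping the parameters honestly: the witness $e$ lives over $\bar b'\bar a'$ rather than $\bar b\bar a$, the perturbations in steps two and four are applied to \emph{different} types (one an $(n+2)$-type, one a $1$-type over $M$-parameters), and one must be careful that "perturb then move a little" versus "move a little then perturb" are reconciled correctly — this is exactly the subtlety flagged in the discussion following \fref{ntn:TypDist}, and it is handled precisely by the two displayed inclusions there together with uniform continuity. Choosing $\varepsilon'$ small enough to absorb the continuity modulus $\delta$ and then still have $2\varepsilon' \leq \varepsilon$ is the quantitative heart of the argument; everything else is a routine composition of perturbation radii via the triangle inequality of the perturbation system.
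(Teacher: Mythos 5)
Your proof follows essentially the same strategy as the paper's: induct on $n$, peel off the last variable by first approximately realising the restriction $q = \exists y\,p$ via the inductive hypothesis, use the fact that $\fp(\varepsilon')$ is a perturbation radius (hence respects $\exists$) to extend to a witness $e$ in an elementary extension, form the $1$-type $r = \tp(e/\bar b\bar a)$ over the genuine $M$-parameters, apply the definition of $\fp$-approximate $\aleph_0$-saturation to $r$, and then chain the two perturbations and two moves back together via uniform continuity and the triangle inequality. One small quantitative slip: the constraints $2\varepsilon' \leq \varepsilon$ and $\varepsilon' \leq \delta(\varepsilon)$ are not quite tight enough, since after applying the outer $\fp(\varepsilon')$ the inner $\cdot^{\varepsilon'}$-moves amplify to $\cdot^{\varepsilon}$, and together with the further $\varepsilon'$-move from the $1$-type step you land at $\cdot^{\varepsilon+\varepsilon'}$ rather than $\cdot^{\varepsilon}$; you should instead take $\varepsilon' \leq \min\{\varepsilon/2,\,\delta(\varepsilon/2)\}$ (the paper does exactly this, choosing $\delta$ with $[d\leq\delta]^{\fp(\varepsilon)} \subseteq [d\leq\varepsilon/2]$ and using $\varepsilon/2$ in the $1$-type step). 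Your "with room to spare" hedge signals you were aware the constants needed adjustment; apart from that the argument is correct.
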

\begin{proof}
  Let $M$ be $\fp$-approximately $\aleph_0$-saturated.
  We proceed by induction on $n$.
  For $n = 0$ there is nothing to prove, so we assume for $n$ and
  prove for $n+1$.

  So let $p(x_{\leq n},\bar a) \in \tS_{n+1}(\bar a)$ for some finite tuple
  $\bar a \in M$, where $p(x_{\leq n},\bar y)$ is a complete type without
  parameters, and let $\varepsilon > 0$.
  We need to find in $M$ a realisation for
  $p^{\fp(\varepsilon)}(x_{\leq n}^\varepsilon,\bar a^\varepsilon)$.

  First, find $\delta > 0$ such that
  $[d(x,y)\leq\delta]^{\fp(\varepsilon)}
  \subseteq [d(x,y) \leq \varepsilon/2]$,
  so in particular,
  $\delta \leq \varepsilon/2$.
  Let $q(x_{<n},\bar y) =
  p(x_{\leq n},\bar y){\restriction}_{(x_{<n},\bar y)}$.
  By the induction hypothesis we can realise
  $q^{\fp(\delta)}(x_{<n}^\delta,\bar a^\delta)$ in $M$.
  In other words,
  we can find $b_{<n} \in M$ and $b_{<n}',\bar a'$ possibly
  outside $M$ such that
  $d(b_{<n}'\bar a',b_{<n}\bar a) \leq \delta$ and
  $\models q^{\fp(\varepsilon/2)}(b_{<n}',\bar a')$.
  Since $\fp(\varepsilon/2)$ is a perturbation radius
  we can find $b'_n$ (still, possibly outside $M$)
  such that   $\models p^{\fp(\varepsilon/2)}(b_{\leq n}',\bar a')$.
  Thus in particular:
  \begin{gather*}
    \models p^{\fp(\varepsilon/2)}(b_n',b_{<n}^\delta,\bar a^\delta).
  \end{gather*}

  Let $r(x) = \tp(b_n'/b_{<n},\bar a)$.
  Using $\fp$-approximate $\aleph_0$-saturation, find $b_n \in M$ such that
  $\models r^{\fp(\varepsilon/2)}(b_n^{\varepsilon/2},b_{<n}^{\varepsilon/2},\bar a^{\varepsilon/2})$.
  That is to say that there exist $d_{\leq n},\bar c$
  (possibly outside $M$)
  such that
  \begin{align*}
    & d(b_{\leq n}\bar a,d_{\leq n},\bar c) \leq \varepsilon/2,\\
    & \models r^{\fp(\varepsilon/2)}(d_{\leq n},\bar c),\\
    \intertext{From which we conclude that:}
    & \models p^{\fp(\varepsilon/2)}(d_n,d_{<n}^\delta,\bar c^\delta)^{\fp(\varepsilon/2)},\\
    & \models p^{\fp(\varepsilon)}(d_n,d_{<n}^{\varepsilon/2},\bar c^{\varepsilon/2}),\\
    & \models p^{\fp(\varepsilon)}(b_n^{\varepsilon/2},b_{<n}^\varepsilon,\bar a^\varepsilon),\\
    & \models p^{\fp(\varepsilon)}(b_{\leq n}^\varepsilon,\bar a^\varepsilon).
    \qedhere
  \end{align*}
\end{proof}

\begin{rmk}
  \fref{lem:PertOSatTup} can be restated as saying that
  if a structure $M$ is $\fp$-approximately $\aleph_0$-saturated then
  it is still so after the adjunction of the sort of $n$-tuples
  (namely a sort for $M^n$, equipped with the supremum metric).
  It follows that $\fp$-approximate $\aleph_0$-saturation
  is not affected by the adjunction of
  the sort of $\aleph_0$ -tuples (with the metric
  $d\bigl( (a_n)_{n\in\bN},(b_n)_{n\in\bN} \bigr)
  = \sum 2^{-n-1}d(a_n,b_n)$),
  or of any imaginary sort (with the natural metric).
  Thus the following results can be extended to $\aleph_0$-tuples and
  imaginary sorts as well.
\end{rmk}

\begin{lem}
  \label{lem:PertOSatRlsd}
  Assume that $M$ is $\fp$-approximately $\aleph_0$-saturated.
  Then for every finite tuple $\bar a \in M$, type
  $p(\bar x,\bar a) \in \tS_n(\bar a)$ and $\varepsilon > 0$,
  $p^{\fp(\varepsilon)}(\bar x,\bar a^\varepsilon)$ is realised in $M$.
\end{lem}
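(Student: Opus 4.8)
The plan is to turn the approximate realisation furnished by \fref{lem:PertOSatTup} into a genuine one by a limit argument, using that $M$, being a metric structure, is a complete metric space. The point is that the only gap between the conclusion sought here and \fref{lem:PertOSatTup} is that in $p^{\fp(\varepsilon)}(\bar x,\bar a^\varepsilon)$ the tuple $\bar x$ is \emph{not} perturbed (only $\bar a$ is), whereas \fref{lem:PertOSatTup} always leaves us with a $\bar x^\varepsilon$; so one must iterate, shrinking the metric ball around $\bar x$ down to a point.

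Fix $\bar a\in M$ with $m=|\bar a|$, a type $p=p(\bar x,\bar a)\in\tS_n(\bar a)$ and $\varepsilon>0$. Choose positive reals $\varepsilon_k$ with $\sum_k\varepsilon_k<\varepsilon$, and fix a continuity modulus $\delta^*$ respected by $\fp(\varepsilon)$; since $\fp(s)\le\fp(\varepsilon)$ and each $\fp(s)$ is a (reduced) perturbation radius, $\delta^*$ is then respected trivially by every $\fp(s)$ with $s\le\varepsilon$. Working inside a fixed, sufficiently saturated $\mathfrak{U}\succeq M$, I would build $\bar b_k\in M$ together with tuples $\bar b_k^*,\bar a_k^*\in\mathfrak{U}$ and reals $\nu_k<\varepsilon$ increasing, so that: (i) $d(\bar b_k,\bar b_k^*)\le\mu_k$ for a summable sequence $\mu_k\searrow 0$; (ii) $d(\bar a,\bar a_k^*)\le\nu_k$; (iii) $\tp(\bar b_k^*\bar a_k^*)\in p^{\fp(\nu_k)}$ (so that $\bar b_k^*\models p^{\fp(\nu_k)}(\bar x,\bar a^{\nu_k})$); and (iv) $d(\bar b_k,\bar b_{k+1})$ is summable in $k$. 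The base case $k=0$ is \fref{lem:PertOSatTup} applied to $p$ with a very small budget. For the recursion step, apply \fref{lem:PertOSatTup} to the complete type $\tp(\bar b_k^*\bar a_k^*/\bar b_k\bar a)\in\tS_{n+m}(\bar b_k\bar a)$ — which records that $\bar x$ lies within $\mu_k$ of $\bar b_k$, that $\bar y$ lies within $\nu_k$ of $\bar a$, and that $(\bar x,\bar y)$ realises $p^{\fp(\nu_k)}$ — with a budget $\varepsilon_{k+1}'\le\varepsilon_{k+1}$ chosen small depending on $\nu_k$. Passing from the resulting perturbed tuple to its restrictions (legitimate since $\fp(\varepsilon_{k+1}')$, being a perturbation radius, commutes with the restriction maps, \fref{lem:PertRadFunct}), and combining the triangle inequality $\fp(\nu_k)\circ\fp(\varepsilon_{k+1}')\le\fp(\nu_k+\varepsilon_{k+1}')$ with the uniform continuity of $\fp(\varepsilon_{k+1}')$, one extracts $\bar b_{k+1},\bar b_{k+1}^*,\bar a_{k+1}^*$ and $\nu_{k+1}$ (slightly larger than $\nu_k$, but still $<\varepsilon$) satisfying (i)--(iv).

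By (iv) and completeness of $M$ the sequence $\bar b_k$ converges to some $\bar b\in M$, and by (i) so does $\bar b_k^*$. By (ii), (iii) and monotonicity of $\fp$ (which follows from the triangle inequality and $\fp(0)=\id$) we have $\tp(\bar b_k^*/\bar a)\in[p^{\fp(\nu_k)}(\bar x,\bar a^{\nu_k})]\subseteq[p^{\fp(\varepsilon)}(\bar x,\bar a^\varepsilon)]$, and the latter is a closed subset of $\tS_n(\bar a)$. As $\bar c\mapsto\tp(\bar c/\bar a)$ is continuous (formulas being uniformly continuous), $\tp(\bar b/\bar a)=\lim_k\tp(\bar b_k^*/\bar a)$ belongs to that closed set; that is, $\bar b\models p^{\fp(\varepsilon)}(\bar x,\bar a^\varepsilon)$ with $\bar b\in M$, as required.

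The main obstacle is the bookkeeping in the recursion step: one must keep, simultaneously, the metric displacement of the $\bar b$'s, the metric displacement of $\bar a$, and the accumulated $\fp$-budget all below their thresholds, and of these the second is the subtle one. Indeed a small $\fp(s)$-perturbation distorts the distance $\nu_k$ between $\bar a$ and $\bar a_k^*$ (which is not itself small), and the bound coming from the continuity modulus $\delta^*$ only gives a multiplicative distortion that does not tend to $0$ as $s\to 0$ — so that estimate alone would not make the increments $\nu_{k+1}-\nu_k$ summable. The way around is to argue by compactness in $\tS_2(T)$: by downward continuity and strictness of $\fp$ one has $\bigcap_{s>0}[d(x,y)\le\nu_k]^{\fp(s)}=[d(x,y)\le\nu_k]$, so for all small enough $s$, $[d(x,y)\le\nu_k]^{\fp(s)}\subseteq[d(x,y)\le\nu_k+w]$ with $w$ as small as one pleases. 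Feeding this in (and likewise controlling the other two quantities via $\delta^*$ used trivially on small distances), the budget at each step can be chosen small enough to make all the required sums converge.
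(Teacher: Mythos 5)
Your proof is correct and follows essentially the same strategy as the paper's: iteratively build a Cauchy sequence of approximate realisations (controlling the metric and $\fp$-budgets simultaneously), use the compactness argument $\bigcap_{s>0}[d(x,y)\le\nu]^{\fp(s)}=[d(x,y)\le\nu]$ to tame the otherwise-uncontrolled distortion of the displacement of $\bar a$, and pass to the limit via completeness of $M$ and closedness of the target type set. The only cosmetic difference is that the paper first reduces to singletons via \fref{lem:PertOSatTup} and then invokes saturation directly, whereas you keep tuples and invoke \fref{lem:PertOSatTup} inside the recursion — both amount to the same thing.
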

\begin{proof}
  By \fref{lem:PertOSatTup} we may assume that $x$ and $a$ are
  singletons.

  Let $\varepsilon_i = (1-2^{-i})\varepsilon$, and choose
  $\delta_i > 0$ small enough so that:
  \begin{enumerate}
  \item $\delta_i \leq 2^{-i-2}\varepsilon$.
  \item $[d(x,y)\leq\delta_i]^{\fp(\varepsilon)} \subseteq [d(x,y) \leq 2^{-i}]$.
  \item $[d(x,y)\leq\varepsilon_i]^{\fp(\delta_i)} \subseteq [d(x,y) \leq \varepsilon_i+2^{-i-2}\varepsilon]$.
  \end{enumerate}
  Notice that the second is possible since $\fp(\varepsilon)$ is uniformly
  continuous.
  The third is possible by a compactness argument using the facts that
  $[d(x,y) \leq \varepsilon_i+2^{-i-2}\varepsilon]$ contains a neighbourhood of $[d(x,y)\leq\varepsilon_i]$, and  
  $$[d(x,y)\leq\varepsilon_i] = [d(x,y)\leq\varepsilon_i]^{\fp(0)} = \bigcap_{\delta>0}[d(x,y)\leq\varepsilon_i]^{\fp(\delta)}.$$
  Let us also agree that $\delta_{-1} = \infty$.

  We now choose a sequence $b_i \in M$ such that
  $\models p^{\fp(\varepsilon_i)}(b_i^{\delta_{i-1}},a^{\varepsilon_i})$:\\
  -- Since $\delta_{-1} = \infty$ and $p(x,a)$ is consistent, any $b_0 \in M$ will
  do.\\
  -- Let $b_i$ be given.
  Then in an elementary extension of $M$ there exists $c$ such that
  $d(c,b_i) \leq \delta_{i-1}$ and $\models p^{\fp(\varepsilon_i)}(c,a^{\varepsilon_i})$.
  Let $q(x,y,z) = \tp(c,b_i,a)$.
  By the saturation assumption there exists $b_{i+1} \in M$ such that
  $\models q^{\fp(\delta_i)}(b_{i+1}^{\delta_i},b_i^{\delta_i},a^{\delta_i})$.
  We know that $q(x,y,z) \vdash p^{\fp(\varepsilon_i)}(x,z^{\varepsilon_i})$,
  so:
  \begin{align*}
    & q^{\fp(\delta_i)}(x,y,z) \vdash p^{\fp(\varepsilon_i+\delta_i)}(x,z^{\varepsilon_i+2^{-i-2}\varepsilon})
    \vdash p^{\fp(\varepsilon_{i+1})}(x,z^{\varepsilon_i+2^{-i-2}})\\
    & q^{\fp(\delta_i)}(x^{\delta_i},y^{\delta_i},z^{\delta_i}) \vdash
    p^{\fp(\varepsilon_{i+1})}(x^{\delta_i},z^{\varepsilon_i+2^{-i-2}\varepsilon+\delta_i})
    \vdash  p^{\fp(\varepsilon_{i+1})}(x^{\delta_i},z^{\varepsilon_{i+1}}).
  \end{align*}
  Thus $\models p^{\fp(\varepsilon_{i+1})}(b_{i+1}^{\delta_i},a^{\varepsilon_{i+1}})$ as required.

  We also know that $q(x,y,z) \vdash d(x,y) \leq \delta_{i-1}$.
  It follows that $q^{\fp(\delta_i)}(x,y,z) \vdash d(x,y) \leq 2^{-i+1}$ (except
  when $i = 0$), so
  $d(b_i,b_{i+1}) \leq 2^{-i+1} + 2\delta_i \leq 2^{-i-1}(4+\varepsilon)$, so
  $(b_i\colon i \in \bN)$ is a Cauchy sequence in $M$ and therefore converges
  to some $b \in M$.
  For all $i < j \in \bN$ we have $\models p^{\fp(\varepsilon)}(b_j^{\delta_i},a^\varepsilon)$,
  so $\models p^{\fp(\varepsilon)}(b^{\delta_i},a^\varepsilon)$ for all $i \in \bN$, and as $\delta_i \to 0$
  we conclude that $\models p^{\fp(\varepsilon)}(b,a^\varepsilon)$, as required.
\end{proof}

\begin{prp}
  \label{prp:PertOSatRlsd}
  Assume that $M$ is $\fp$-approximately $\aleph_0$-saturated.
  Then for every finite tuple $\bar a \in M$, type
  $p(\bar x,\bar a) \in \tS_n(\bar a)$ and $\varepsilon > 0$ there are
  $\bar b,\bar a' \in M$ such that:
  \begin{enumerate}
  \item $d(\bar a,\bar a') \leq \varepsilon$.
  \item $\models p^{\fp(\varepsilon)}(\bar b,\bar a')$.
  \end{enumerate}
\end{prp}
\begin{proof}
  Let
  \begin{align*}
    q(\bar x,\bar y,\bar a) & := p(\bar x,\bar y) \wedge \bar y = \bar a.
  \end{align*}
  By Step II there are
  $\bar b, \bar a' \in M$ such that
  $\models q^{\fp(\varepsilon)}(\bar b,\bar a',\bar a^\varepsilon)$.
  Since $q(\bar x,\bar y,\bar z)$ implies that $\bar y = \bar z$,
  so does $q^{\fp(\varepsilon)}$, so $\models p(\bar b,\bar a')$ and
  $d(\bar a',\bar a) \leq \varepsilon$, as required.
\end{proof}

\begin{prp}
  \label{prp:PertOSatIsom}
  Any two elementarily equivalent separable $\fp$-approximately
  $\aleph_0$-saturated structures are $\fp$-isomorphic.
\end{prp}
\begin{proof}
  Let $M \equiv N$ be two separable $\fp$-approximately $\aleph_0$-saturated
  models, and let $\varepsilon > 0$ be given.
  Let $M_0 = \{a_i\colon i \in \bN\}$ and $N_0 = \{b_i\colon i \in \bN\}$ be
  countable dense subsets of $M$ and $N$, respectively.

  Define for convenience $\varepsilon_i = (1-2^{-i})\varepsilon$ for all $i \in \bN$.
  As $\fp(\varepsilon)$ is uniformly continuous, we may also choose
  $\delta_i > 0$ such that $[d(x,y) \leq \delta_i]^{\fp(\varepsilon)} \subseteq [d(x,y) \leq 2^{-i-1}]$
  (so in particular, $\delta_i \leq 2^{-i-1}$).

  We construct a sequence of mappings
  $f_i\colon A_i \to N$ and $g_i\colon B_i \to M$, where $A_i \subseteq M$ and
  $B_i \subseteq N$ are finite, such that:
  \begin{enumerate}
  \item $A_0 = B_0 = \emptyset$, and for $i > 0$:
    \begin{align*}
      A_{i+1} & = a_{\leq i} \cup A_i \cup g_i(B_i) \\
      B_{i+1} & = b_{\leq i} \cup B_i \cup f_{i+1}(A_{i+1}).
    \end{align*}
  \item For all $c \in A_i$: $d(c,g_i\circ f_i(c)) \leq \delta_i$.
  \item For all $c \in B_i$: $d(c,f_{i+1}\circ g_i(c)) \leq \delta_i$.
  \item For each $i$,
    $f_i$ is a $\fp(\varepsilon_{2i})$-perturbation and
    $g_i$ is a $\fp(\varepsilon_{2i+1})$-one.
  \end{enumerate}

  We start with $f_0 = \emptyset$, which is $0$-as we assume that
  $M \equiv N$.

  Assume that $f_i$ is given.
  Then $A_i$ is given, and is finite by the induction hypothesis, and
  this determines $B_i$ which is also finite.
  Fix enumerations for $A_i$ and $B_i$ as finite tuples, and let
  $p(\bar x,\bar y) = \tp^N(B_i,f(A_i))$.

  As $f_i$ is a $\fp(\varepsilon_{2i})$-perturbation, there is a type
  $q(\bar x,\bar y) \in p^{\fp(\varepsilon_{2i})}$ such that
  $q(\bar x,A_i)$ is consistent.
  By $\fp$-approximate $\aleph_0$-saturation of $M$ there are tuples
  $B_i', A_i' \subseteq M$ such that $d(A_i,A_i') \leq \delta_i$ and
  $M \models q(B_i',A_i')^{\fp(2^{-2i-1}\varepsilon)}$.
  Then $g_i\colon B_i \mapsto B_i'$ is $\fp(\varepsilon_{2i+1})$-elementary, so it will
  do.

  We construct $f_{i+1}$ from $g_i$ similarly.

  We now have for all $c \in A_i$:
  \begin{align*}
    d(c,g_i\circ f_i(c)) \leq \delta_i
    & \Longrightarrow d(f_{i+1}(c),f_{i+1}\circ g_i\circ f_i(c)) \leq 2^{-i-1} \\
    & \Longrightarrow d(f_{i+1}(c),f_i(c)) \leq 2^{-i}.
  \end{align*}
  Therefore the sequence of mappings $f_i$ converges to a mapping
  $f\colon A \to N$, where $A = \bigcup A_i$.
  As $f_i$ is an $\fp(\varepsilon)$-perturbation for all $i$ so is $f$.
  As $M_0 \subseteq A$ we have $\bar A = M$,
  so $f$ extends uniquely to a
  $\fp(\varepsilon)$-perturbation $\bar f\colon M \to N$.
  An $\fp(\varepsilon)$-perturbation $\bar g\colon N \to M$
  is constructed similarly.

  Finally, for $i < j \in \bN$ choose $k \geq j$ such that
  $2^{-k+2} \leq \delta_j$.
  Then:
  \begin{align*}
    d(a_i,\bar g\circ\bar f(a_i))
    & \leq d(a_i,\bar g\circ f_k(a_i)) + 2^{-j} \\
    & \leq d(a_i,g_{k+1}\circ f_k(a_i)) + 2^{-j+1} + 2^{-j} \\
    & \leq 2^{-j} + 2^{-j+1} + 2^{-j} \leq 2^{-j+2}.
  \end{align*}
  By letting $j \to \infty$ we see that $\bar g \circ \bar f$ is the identity
  on $M_0$, and therefore on $M$.
  Similarly $\bar f \circ \bar g = \id_N$.
\end{proof}

\section{Categoricty up to perturbation}
\label{sec:PertRN}

We now turn to the proof of a Ryll-Nardzewski style characterisation
of separable categoricity up so small perturbations.
As usual, $T$ denotes a theory and $\fp$ a perturbation system for
$T$.

\begin{dfn}
  Let $\kappa \geq |T|$ be a cardinal
  (recall that $|T| = |\cL| + \aleph_0$).
  We say that a theory $T$ is $\fp$-$\kappa$-categorical if it has a
  model of density character $\kappa$, and in addition every two
  models $M,N \models T$ of density character $\kappa$
  are $\fp$-isomorphic.
\end{dfn}
For the purpose of this definition we consider the density character
of a finite set to be $\aleph_0$.
In particular, the complete theory of a compact, or finite, structure,
will be considered $\aleph_0$-categorical, and therefore
$\fp$-$\aleph_0$-categorical for all $\fp$.

\begin{rmk}
  It is not difficult to verify a general converse to
  \fref{prp:PertOSatIsom}, i.e.,
  that if $\fp$ is a perturbation system
  and $M$ and $N$ are $\fp$-isomorphic then $M \equiv N$.
  Thus Vaught's Test holds just as well for perturbed categoricity:
  if $T$ has no compact models and is $\fp$-$\kappa$-categorical for some
  $\kappa \geq |T|$ then $T$ is complete.
\end{rmk}

\begin{conv}
  For the rest of this section we assume that $T$ admits non compact
  models.
\end{conv}

\begin{lem}
  A complete countable theory $T$ is $\fp$-$\aleph_0$-categorical if
  and only if all separable models of $T$ are $\fp$-approximately
  $\aleph_0$-saturated.
\end{lem}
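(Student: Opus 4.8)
The plan is to dispose of $(\Leftarrow)$ at once and to devote the work to $(\Rightarrow)$. For $(\Leftarrow)$: since $T$ is complete all its models are elementarily equivalent, so if every separable model of $T$ is $\fp$-approximately $\aleph_0$-saturated then \fref{prp:PertOSatIsom} shows that any two of them are $\fp$-isomorphic; and, $T$ being countable, it has a separable model (of density character $\aleph_0$), so $T$ is $\fp$-$\aleph_0$-categorical.

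For $(\Rightarrow)$ I would run a Ryll--Nardzewski style argument organised around the following condition on $\fp$, which I will call \emph{topological boundedness}: for every $n$, every $p\in\tS_n(T)$ and every $\varepsilon>0$, the closed set $[p^{\fp(\varepsilon)}(\bar x^\varepsilon)]\subseteq\tS_n(T)$ is a neighbourhood of $p$ for the logic topology. (These sets decrease with $\varepsilon$, contain $p$, and $\{(p,q,\varepsilon)\colon q\in[p^{\fp(\varepsilon)}(\bar x^\varepsilon)]\}$ is closed, by downward continuity of $\fp$ and closedness of the $\fp_n(\varepsilon)$; in fact topological boundedness is essentially the topological criterion that will appear in \fref{thm:PertRN}.) I would then prove the two implications: (i) topological boundedness implies that \emph{every} separable model of $T$ is $\fp$-approximately $\aleph_0$-saturated; and (ii) $\fp$-$\aleph_0$-categoricity implies topological boundedness. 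Chaining these with $(\Leftarrow)$ gives the lemma.

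Step (i) is the easy half. Let $M$ be separable, $\bar a\in M$ a finite tuple of length $n$, $p(x,\bar a)\in\tS_1(\bar a)$ and $\varepsilon>0$; by \fref{lem:PertOSatTup} it is enough to handle such $1$-types. Identify $p(x,\bar a)$ with the complete $(1{+}n)$-type $\hat p(x,\bar y)$ over $\emptyset$ with $\hat p{\restriction}_{\bar y}=\tp(\bar a)$. A standard fact gives that the set $\{\tp^M(c,\bar a)\colon c\in M\}$ is dense, for the logic topology, in the fibre $F=\{s\in\tS_{1+n}(T)\colon s{\restriction}_{\bar y}=\tp(\bar a)\}$ (for any $\cL$-formula $\varphi(x,\bar a)$ one has $(\inf_x\varphi)^M=\inf_{c\in M}\varphi(c,\bar a)^M$). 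By topological boundedness $[p^{\fp(\varepsilon)}(x^\varepsilon,\bar y^\varepsilon)]$ is a logic-neighbourhood of $\hat p$; hence so is its trace on $F$, and therefore it meets the dense set: some $c\in M$ has $\tp^M(c,\bar a)\in[p^{\fp(\varepsilon)}(x^\varepsilon,\bar a^\varepsilon)]$, which is precisely the statement that $p^{\fp(\varepsilon)}(x^\varepsilon,\bar a^\varepsilon)$ is realised in $M$. Thus $M$ is $\fp$-approximately $\aleph_0$-saturated.

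Step (ii) is the substance, and I expect it to be the main obstacle. I would argue by contraposition: assume $[p^{\fp(\varepsilon_0)}(\bar x^{\varepsilon_0})]$ fails to be a logic-neighbourhood of $p$ for some $n$, $p\in\tS_n(T)$ and $\varepsilon_0>0$. As $T$ is countable, $\tS_n(T)$ is compact metrisable, so $p$ is non-isolated and is the logic-limit of a sequence of types $q_k\notin[p^{\fp(\varepsilon_0)}(\bar x^{\varepsilon_0})]$ --- that is, no $q_k$ can be brought to $p$ by a metric move of size $\le\varepsilon_0$ followed by an $\fp(\varepsilon_0)$-perturbation. From such a configuration one wants to build two separable models of $T$ that fail to be $\fp$-isomorphic, contradicting categoricity: intuitively, one model realising $p$, and a second one which ``robustly avoids'' $p$ --- omitting a suitable (non-principal) partial type extracted from $[p^{\fp(\varepsilon_0/2)}(\bar x^{\varepsilon_0/2})]$ while still realising enough types --- together with a back-and-forth argument in the style of the proof of \fref{prp:PertOSatIsom} showing that an $\fp(\delta)$-bi-perturbation between these models, with $\delta<\varepsilon_0/2$, would carry a realisation of $p$ to one of something $\le\varepsilon_0/2$-close to $p$, which is impossible. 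The delicate points --- promoting the failure of topological boundedness to a family of types that is genuinely ``spread out'' at a single scale, checking that the partial type to be omitted is omissible, and verifying that the omission is not destroyed by a small perturbation --- are exactly the Ryll--Nardzewski content of the section and recur, in sharpened quantitative form, in \fref{thm:PertRN}.
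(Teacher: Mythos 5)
Your $(\Leftarrow)$ direction is correct and matches the paper: it is an immediate application of \fref{prp:PertOSatIsom}, plus the observation that a countable complete theory has a separable model.

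Your $(\Rightarrow)$ direction, however, has a fatal flaw in step (ii). The ``topological boundedness'' condition you isolate --- that $[p^{\fp(\varepsilon)}(\bar x^\varepsilon)]$ is a logic-\emph{neighbourhood of $p$}, for every $p \in \tS_n(T)$ and $\varepsilon > 0$ --- is, by \fref{lem:JointMetric}, exactly the assertion that $\tilde d_\fp$ coincides with the logic topology on each $\tS_n(T)$. This is a \emph{sufficient} condition for $\fp$-$\aleph_0$-categoricity (\fref{prp:PertRNSuff}), but it is \emph{not} necessary: \fref{exm:LpCntrExm} produces a perturbation system $\fp_g$ and a theory $T_{f,g}$ which is $\fp_g$-$\aleph_0$-categorical while $\tilde d_{\fp_g}$ defines a non-compact (hence strictly finer than logic) topology on $\tS_n(T_{f,g})$. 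So the implication ``$\fp$-$\aleph_0$-categorical $\Rightarrow$ topological boundedness'' that you plan to prove is simply false, and no omitting-types argument can rescue it. Your parenthetical remark that topological boundedness ``is essentially the topological criterion that will appear in \fref{thm:PertRN}'' is where things go wrong: the actual criterion there asks only that $[p^{\fp(\varepsilon)}(\bar x^\varepsilon,\bar a^\varepsilon)]$ have \emph{non-empty interior} in $\tS_n(\bar a)$ --- the interior need not contain $p$ itself --- and, crucially, the quantification must run over types \emph{with parameters} $\bar a$, a point the paper explicitly flags as unavoidable. These two relaxations are precisely what dodges the $L^p$ counter-example.

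The paper's own proof of $(\Rightarrow)$ is considerably more direct and routes around all of this. Given a separable $M$, a tuple $\bar a \in M$, a type $q(y,\bar a)$ and $\varepsilon > 0$, one uses downward L\"owenheim--Skolem to manufacture a separable $N \models T$ in which, for any $\bar b$ realising $\tp(\bar a)^{\fp(\varepsilon/2)}$, the partial type $q^{\fp(\varepsilon/2)}(y,\bar b^\delta)$ is realised (with $\delta$ a modulus for $\fp(\varepsilon)$). Categoricity gives an $\fp(\varepsilon/2)$-isomorphism $f\colon M \to N$; setting $\bar b = f(\bar a)$, one finds a realisation $c \in N$, and $d = f^{-1}(c) \in M$ witnesses $q^{\fp(\varepsilon)}(y,\bar a^\varepsilon)$. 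No topological criterion, no omitting types, and in particular no detour through the machinery that the lemma itself is used to establish later. If you want to salvage the topological route, you would have to replace topological boundedness by the genuine criterion of \fref{thm:PertRN} --- non-empty interior, over $\tS_n(\bar a)$ for all finite $\bar a$ --- and then be careful that your proof of the analogue of step (ii) does not silently invoke the lemma you are proving (as the paper's proof of \fref{thm:PertRN}, $(1) \Rightarrow (2)$, does).
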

\begin{proof}
  Right to left follows from \ref{prp:PertOSatIsom}.

  Conversely, assume that $T$ is $\fp$-$\aleph_0$-categorical, and let
  $M \models T$ be separable.
  Let $\bar a \in M^n$, $q(\bar x) = \tp(\bar a)$,
  and let $q(y,\bar a) \in \tS_1(\bar a)$,
  where $q(y,\bar x) \in \tS_{n+1}(T)$ is a complete
  pure type.
  Let also $\varepsilon > 0$, and $\delta = \delta_\rho(\varepsilon) > 0$.

  By the downward Löwenheim-Skolem theorem there exists a separable
  model $N \models T$ such that for every $n$-tuple $\bar b \in N$,
  if $\bar b$ satisfies $p^{\fp(\varepsilon/2)}$ then
  $q^{\fp(\varepsilon/2)}(y,\bar b^\delta)$ is realised in $N$.
  By assumption there exists a $\fp(\varepsilon/2)$-isomorphism
  $f\colon M \to N$.
  Then $\bar b = f(\bar a) \models p^{\fp(\varepsilon/2)}$,
  and let $c \in N$ be such that
  $\models q^{\fp(\varepsilon/2)}(c,\bar b^\delta)$.
  Letting $d = f^{-1}(c)$ we get
  $\models q^{\fp(\varepsilon)}(d,\bar a^\varepsilon)$.

  Thus $M$ is $\fp$-approximately $\aleph_0$-saturated.
\end{proof}

We observe that if $\fp$ is a perturbation system, then the
topology on $\tS_n(T)$ induced by $d_\fp$ is finer than the logic
topology.
Indeed, if $U$ is a neighbourhood of $p$, then
$\bigcap_{\varepsilon > 0} p^{\fp(\varepsilon)} = \{p\} \subseteq U$,
and by compactness we must have
$p^{\fp(\varepsilon)} \subseteq U$ for some $\varepsilon > 0$.
In fact,  $d_\fp$ is usually too fine to be used directly for
characterising $\aleph_0$-categoricity.
For example, in case
of the identity perturbation (i.e., no perturbation allowed at all),
$d_{\fp,n}$ is a discrete metric (with values in $\{0,\infty\}$), while the
standard Ryll-Nardzewski theorem for continuous logic does consider a
much coarser topology, namely that induced by the metric $d$.
We therefore need to take both metrics into account.

\begin{thm}
  \label{thm:PertRN}
  Let $T$ be a complete countable theory, $\fp$ a perturbation system
  for $T$.
  Then the following are equivalent:
  \begin{enumerate}
  \item The theory $T$ is $\fp$-$\aleph_0$-categorical.
  \item For every $n \in \bN$, finite $\bar a$,
    $p \in \tS_n(\bar a)$ and $\varepsilon > 0$,
    the set $[p^{\fp(\varepsilon)}(\bar x^\varepsilon,\bar a^\varepsilon)]$ has
    non empty interior in $\tS_n(\bar a)$.
  \item Same restricted to $n = 1$.
  \end{enumerate}
\end{thm}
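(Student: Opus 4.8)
The plan is to prove the cycle $(1)\Rightarrow(2)\Rightarrow(3)\Rightarrow(1)$, the bridge in the two non-trivial implications being the preceding lemma, according to which $T$ is $\fp$-$\aleph_0$-categorical if and only if every separable $M\models T$ is $\fp$-approximately $\aleph_0$-saturated. The implication $(2)\Rightarrow(3)$ is just the special case $n=1$, so nothing is to be done there.

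For $(3)\Rightarrow(1)$ I would verify $\fp$-approximate $\aleph_0$-saturation of an arbitrary separable $M\models T$ directly from the definition. Fix a finite tuple $\bar a\in M$, a type $p(x,\bar a)\in\tS_1(\bar a)$ and $\varepsilon>0$. By hypothesis $[p^{\fp(\varepsilon)}(x^\varepsilon,\bar a^\varepsilon)]$ has non-empty interior in $\tS_1(\bar a)$, so it contains a non-empty basic open set, which we may take of the form $[\varphi(x,\bar a)<r]$ for an $\cL(\bar a)$-formula $\varphi$ and some $r>0$. Since $M$ is a model of $T$, the value $\inf_x\varphi(x,\bar a)$ is the same in $M$ as in any elementary extension of $M$; being $<r$, it is witnessed by some $b\in M$ with $\varphi(b,\bar a)<r$, and then $\tp(b/\bar a)\in[\varphi(x,\bar a)<r]\subseteq[p^{\fp(\varepsilon)}(x^\varepsilon,\bar a^\varepsilon)]$. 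This says exactly that $b$ realises $p^{\fp(\varepsilon)}(x^\varepsilon,\bar a^\varepsilon)$ in $M$ (the witnesses for the perturbed and displaced type being allowed to lie outside $M$, as the definition permits), which is what was required.

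For $(1)\Rightarrow(2)$ I would argue by contradiction, via the omitting types theorem for continuous logic. Suppose $(2)$ fails, say the closed set $S=[p^{\fp(\varepsilon)}(\bar x^\varepsilon,\bar a^\varepsilon)]$ has empty interior in $\tS_n(\bar a)$ for some $n$, some finite $\bar a$, some $p\in\tS_n(\bar a)$ and some $\varepsilon>0$. Name the entries of $\bar a$ by new constant symbols $\bar c$ and pass to the complete countable theory $T'$ that is $T$ together with $\tp(\bar a)$ written in $\bar c$. Then $\tS_n(\bar a)$ is canonically homeomorphic to $\tS_n(T')$, and under this identification $S$ corresponds to the closed set $[p^{\fp(\varepsilon)}(\bar x^\varepsilon,\bar c^\varepsilon)]$, which therefore also has empty interior in $\tS_n(T')$. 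By the omitting types theorem (see \cite{BenYaacov-Usvyatsov:CFO,BenYaacov-Berenstein-Henson-Usvyatsov:NewtonMS}) there is a separable model $N'\models T'$ omitting the partial type $\Sigma(\bar x)=p^{\fp(\varepsilon)}(\bar x^\varepsilon,\bar c^\varepsilon)$, i.e.\ no tuple of $N'$ satisfies $\Sigma$. Its $\cL$-reduct $N$ is a separable model of $T$, so since $T$ is $\fp$-$\aleph_0$-categorical the preceding lemma makes $N$ $\fp$-approximately $\aleph_0$-saturated; applying this for $n$-types via \fref{lem:PertOSatTup}, over the parameters $\bar c^{N'}$ and to the type $p$, shows that $\Sigma$ is realised in $N'$, contradicting the choice of $N'$. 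Hence $(2)$ holds.

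I expect the one step requiring genuine care to be this last implication: one must invoke the continuous omitting types theorem in precisely the form ``a closed subset of the type space with empty interior is omissible in a separable model'', and handle the parameter tuple $\bar a$ cleanly, by naming it with constants and transporting the ``empty interior'' hypothesis along the homeomorphism $\tS_n(\bar a)\cong\tS_n(T')$. The remaining implications are routine once one records the absoluteness of values of formulas between a model and its elementary extensions, which is the only non-formal ingredient in $(3)\Rightarrow(1)$.
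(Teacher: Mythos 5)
Your overall strategy is the same as the paper's: the cycle $(1)\Rightarrow(2)\Rightarrow(3)\Rightarrow(1)$, with the bridge lemma ``$T$ is $\fp$-$\aleph_0$-categorical iff every separable model is $\fp$-approximately $\aleph_0$-saturated'' doing the real work. Your $(2)\Rightarrow(3)$ and $(3)\Rightarrow(1)$ are correct and essentially identical to the paper's (you simply spell out the formula-witnessing step that the paper leaves implicit).

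The gap is in $(1)\Rightarrow(2)$, precisely at the point you flagged as delicate. You invoke ``the omitting types theorem'' to conclude that there is a separable model $N'$ in which \emph{no} tuple realises $\Sigma(\bar x)=p^{\fp(\varepsilon)}(\bar x^\varepsilon,\bar c^\varepsilon)$. But the continuous omitting types theorem, for a closed set with empty interior in the logic topology, only produces a separable model in which $\Sigma$ is omitted on a \emph{dense} subset of tuples; it does not give full omission. (Full omission would need metric nowhere-density of $\Sigma$, i.e.\ that all $\delta$-thickenings of $\Sigma$ have empty interior, which is \emph{not} part of the hypothesis -- one only knows empty interior for the given $\varepsilon$-thickening and smaller.) Since you then appeal to \fref{lem:PertOSatTup} to realise $\Sigma$ itself and derive the contradiction, your argument as written does not close. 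The paper handles this exactly by exploiting the built-in $\varepsilon$-thickening: it omits $\Sigma$ on a dense subset of $(M,\bar a)\models T_{\bar a}$, and then observes that the \emph{un-thickened} partial type $\Sigma_0=p^{\fp(\varepsilon)}(\bar x,\bar a^\varepsilon)$ must be omitted by \emph{every} $n$-tuple of $M$ -- for if $\bar b$ realised $\Sigma_0$, any tuple from the dense set within distance $\varepsilon$ of $\bar b$ would realise $\Sigma$. It then contradicts $\fp$-approximate $\aleph_0$-saturation via \fref{lem:PertOSatRlsd} (realisation of $p^{\fp(\varepsilon)}(\bar x,\bar a^\varepsilon)$, without the thickening on $\bar x$), rather than via \fref{lem:PertOSatTup}. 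So to repair your argument you should replace ``omitting $\Sigma$'' by ``omitting $\Sigma$ on a dense subset, hence fully omitting $\Sigma_0$,'' and cite \fref{lem:PertOSatRlsd} in place of \fref{lem:PertOSatTup}.
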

\begin{proof}
  \begin{cycprf}
  \item[\impnext]
    Assume there is some finite tuple $\bar a$, $n \in \bN$ and
    $p(\bar x,\bar a) \in \tS_n(\bar a)$, such that
    for some $\varepsilon > 0$ the set
    $[p^{\fp(\varepsilon)}(\bar x^\varepsilon,\bar a^\varepsilon)]$ has
    empty interior in $\tS_n(\bar a)$.
    Then it is
    nowhere dense
    in $\tS_n(\bar a)$, and can be omitted in a dense subset
    of some separable model $(M,\bar a) \models T_{\bar a}$.
    Therefore $[p^{\fp(\varepsilon)}(\bar x,\bar a^\varepsilon)]$ is omitted in
    $M$, which is therefore not $\fp$-approximately $\aleph_0$-saturated.
    Therefore $T$ cannot be $\fp$-$\aleph_0$-categorical.
  \item[\impnext] Clear.
  \item[\impfirst]
    We show that every $M \models T$ is $\fp$-approximately
    $\aleph_0$-saturated.
    Indeed, let $\bar a \in M$ be a finite tuple and
    $p(x,\bar a) \in \tS_1(T)$.
    As $[p^{\fp(\varepsilon)}(x^\varepsilon,\bar a^\varepsilon)]$ has
    non empty interior in
    $\tS_n(\bar a)$ is must be realised in $M$.
  \end{cycprf}
\end{proof}

We may wish to combine the two metrics in a single one.
While one may try to achieve this through various general approaches
for the combination of two metrics, the specific situation in which we
find ourselves suggests a specific construction as the ``natural'' one.

Fix $m \in \bN$, and let
$\cL_{\bar c} = \cL \cup \{c_i\colon i <m\}$, where each $c_i$ is a new distinct
constant symbol.
Let $T_{\bar c}$ be the (incomplete) $\cL_{\bar c}$-theory generated by
$T$.
We extend $\fp$ into a perturbation system $\fp_{\bar c}$ for
$T_{\bar c}$ by
allowing the new constant symbols to move a little.
It is more convenient to think in terms of a relational language, in
which each of the constants $c_i$ is represented by a unary predicate
giving the distance to $c_i$.
We therefore define $\fp_{\bar c}^0(\varepsilon)$ to be the perturbation
pre-radius which, for $p,q \in \tS_n(T_{\bar c})$, allows to perturb $p$
to $q$ if and only if:
\begin{enumerate}
\item $q\rest_\cL \in (p\rest_\cL)^{\fp(\varepsilon)}$; and:
\item For all $i<m$ and $j<n$: $|d(x_j,c_i)^p - d(x_j,c_i)^q| \leq \varepsilon$.
\end{enumerate}
It is easy to verify that $\fp_{\bar c}^0$ is a uniformly continuous
perturbation
pre-system, which generates a perturbation system
$\fp_{\bar c} = \llbracket\fp^0_{\bar c}\rrbracket$.
Thus $\fp_{\bar c}$ can be roughly described as allowing to perturb
models of $T$ according to $\fp$, and to move the new constants
(i.e., change the distance to them) a little as well.
If $\bar c = \emptyset$ we changed nothing: $\fp_\emptyset = \fp$.

By definition of the bi-reduct $\llbracket\cdot\rrbracket$, we have for all $\varepsilon>0$, and
$(M,\bar a),(N,\bar b) \models T_{\bar c}$:
\begin{align*}
  & \BiPert_{\fp_{\bar c}^0(\varepsilon)}((M,\bar a),(N,\bar b)) = 
  \BiPert_{\fp_{\bar c}(\varepsilon)}((M,\bar a),(N,\bar b)) = \\
  & \qquad \left\{ f \in \BiPert_{\fp(\varepsilon)}(M,N)\colon
    (\forall e \in M, i<m)\bigl( |d^M(e,a_i) - d^N(f(e),b_i)| \leq \varepsilon \bigr) \right\}.
\end{align*}

The space $\tS_0(T_{\bar c})$ is the set of completions of
$T_{\bar c}$ and can be naturally identified with $\tS_m(T)$.
We define a metric $\tilde d_{\fp,m}$ on $\tS_m(T)$ as the image of
$d_{\fp_{\bar c}}$ under this identification.
Equivalently:
\begin{dfn}
  \label{dfn:JointMetric}
  For $p,q \in \tS_n(T)$, we define $\tilde d_\fp(p,q)$ as the infimum
  of all $\varepsilon$ for which there exist models $M,N \models T$,
  $\bar a \in M^n$ and $\bar b \in N^n$ and a mapping
  $f\colon M\to N$ such that:
  \begin{enumerate}
  \item $\bar a \models p$ and $\bar b \models q$.
  \item $f \in \BiPert_{\fp(\varepsilon)}(M,N)$.
  \item For all $i < n$ and $c \in M$:
    $|d^M(c,a_i) - d^N(f(c),b_i)| \leq \varepsilon$.
  \end{enumerate}
\end{dfn}

Alternatively, we may wish to restrict $\fp_{\bar c}$
to a specific completion of $T_{\bar c}$.
Any such completion is of the form $T_{\bar a} = \Th(M,\bar a)$,
where $M \models T$ and $\bar a \in M^n$.
Let us denote the restriction of $\fp_{\bar c}$ to $T_{\bar a}$ by
$\fp_{\bar a}$.

Of course, once we have constructed $\fp_{\bar a}$, we can construct
$\tilde d_{\fp_{\bar a}}$ as above, and
it follows immediately from the definitions that:
\begin{lem}
  \label{lem:JointMetParams}
  The construction $\fp \mapsto \tilde d_\fp$ commutes with the addition
  of parameters, in the sense that for all
  $\bar a$, $\bar b$ and $\bar c$, if $|\bar b| = |\bar c|$ then:
  \begin{align*}
    \tilde d_{\fp_{\bar a}}(\tp(\bar b/\bar a),\tp(\bar c/\bar a))
    =
    \tilde d_{\fp}(\tp(\bar b,\bar a),\tp(\bar c,\bar a)).
  \end{align*}
\end{lem}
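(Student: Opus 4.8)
The plan is purely to unwind both sides from \fref{dfn:JointMetric} and to observe that the two resulting families of admissible $\varepsilon$'s coincide, so that the two infima agree. Write $m = |\bar a|$ and $k = |\bar b| = |\bar c|$, and read $\tp(\bar b,\bar a)$, $\tp(\bar c,\bar a)$ as the types in $\tS_{k+m}(T)$ of the concatenated tuples $\bar b\bar a$ and $\bar c\bar a$. First I would record what the right-hand side says: $\tilde d_{\fp}(\tp(\bar b,\bar a),\tp(\bar c,\bar a))$ is the infimum of those $\varepsilon \geq 0$ for which there are $M,N \models T$, a realisation $\bar b_1\bar a_1 \models \tp(\bar b,\bar a)$ in $M$ and a realisation $\bar c_1\bar a_2 \models \tp(\bar c,\bar a)$ in $N$, and an $f \in \BiPert_{\fp(\varepsilon)}(M,N)$ such that for every $e \in M$ both $|d^M(e,b_{1,j}) - d^N(f(e),c_{1,j})| \leq \varepsilon$ for $j<k$ and $|d^M(e,a_{1,i}) - d^N(f(e),a_{2,i})| \leq \varepsilon$ for $i<m$.

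Next I would do the same for the left-hand side. A realisation of $\tp(\bar b/\bar a)$ sits inside a model $(M,\bar a_1) \models T_{\bar a}$, and since $T_{\bar a}$ is a complete $\cL_{\bar c}$-theory, the condition ``$(M,\bar a_1) \models T_{\bar a}$'' just means ``$M \models T$ and $\tp^M(\bar a_1) = \tp(\bar a)$''; in particular any two models of $T_{\bar a}$ are automatically $\cL_{\bar c}$-elementarily equivalent. Applying \fref{dfn:JointMetric} to the theory $T_{\bar a}$ and the system $\fp_{\bar a}$, the quantity $\tilde d_{\fp_{\bar a}}(\tp(\bar b/\bar a),\tp(\bar c/\bar a))$ becomes the infimum of those $\varepsilon$ for which there are $M,N \models T$ with $\tp^M(\bar a_1) = \tp^N(\bar a_2) = \tp(\bar a)$, realisations $\bar b_1\bar a_1 \models \tp(\bar b,\bar a)$ and $\bar c_1\bar a_2 \models \tp(\bar c,\bar a)$, and an $f \in \BiPert_{\fp_{\bar a}(\varepsilon)}((M,\bar a_1),(N,\bar a_2))$ with $|d^M(e,b_{1,j}) - d^N(f(e),c_{1,j})| \leq \varepsilon$ for all $e \in M$, $j<k$. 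Now I would feed in the description of $\BiPert_{\fp_{\bar c}(\varepsilon)}$ displayed just before \fref{dfn:JointMetric}, specialised to models of $T_{\bar a}$: $f \in \BiPert_{\fp_{\bar a}(\varepsilon)}((M,\bar a_1),(N,\bar a_2))$ holds exactly when $f \in \BiPert_{\fp(\varepsilon)}(M,N)$ and $|d^M(e,a_{1,i}) - d^N(f(e),a_{2,i})| \leq \varepsilon$ for all $e \in M$, $i<m$.

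Substituting this last equivalence into the left-hand description turns its data and constraints into literally those of the right-hand description: the same pair of realisations, the same $f \in \BiPert_{\fp(\varepsilon)}(M,N)$, and the conjunction of exactly the same two families of distance inequalities. Hence the two sets of admissible $\varepsilon$ are equal, and so are their infima (the convention $\inf\emptyset = \infty$ takes care of the case of infinite distance). I do not expect any genuine obstacle here; the one point worth stating explicitly is the bookkeeping remark that the ``parameter part'' of condition (3) of \fref{dfn:JointMetric}, applied to $\tilde d_\fp$ on the concatenated $(k+m)$-tuple, is precisely the extra constraint that upgrades an $\fp(\varepsilon)$-bi-perturbation of $M,N$ to an $\fp_{\bar a}(\varepsilon)$-bi-perturbation of $(M,\bar a_1),(N,\bar a_2)$. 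The degenerate case $\bar a = \emptyset$, where $\fp_\emptyset = \fp$, needs no separate treatment.
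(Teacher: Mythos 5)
Your unwinding is correct and is exactly the argument the paper has in mind; the paper simply states that the lemma ``follows immediately from the definitions'' and gives no proof, so your proposal supplies precisely the bookkeeping being left to the reader. The key observation you isolate — that the parameter part of condition (3) in \fref{dfn:JointMetric}, applied to the concatenated tuples, is exactly the extra constraint distinguishing $\BiPert_{\fp(\varepsilon)}(M,N)$ from $\BiPert_{\fp_{\bar a}(\varepsilon)}((M,\bar a_1),(N,\bar a_2))$ as recorded in the displayed description just before the definition — is the whole content of the lemma.
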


In an arbitrary metric space $(X,d)$, let
$B_d(x,\varepsilon)$ denote the closed $\varepsilon$-ball around a point $x$.
The following result characterise the topology defined by
$\tilde d_\fp$:
\begin{lem}
  \label{lem:JointMetric}
  Fix $n \in \bN$ and a finite tuple $\bar a \in M \models T$.
  The metric $\tilde d_{\fp_{\bar a}}$ is coarser (i.e., smaller)
  on $\tS_n(\bar a)$ than both $d$
  and $d_{\fp_{\bar a}}$, and finer than the logic topology.

  Also, for every $p(\bar x,\bar a) \in \tS_n(\bar a)$,
  the family
  \hbox{$\bigl\{
    [p^{\fp(\varepsilon)}(\bar x^\varepsilon,\bar a^\varepsilon)]
    \colon \varepsilon > 0
    \bigr\}$}
  forms a base of
  $\tilde d_{\fp_{\bar a}}$-neighbourhoods for $p(\bar x,\bar a)$.
\end{lem}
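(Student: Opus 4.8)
The plan is to prove the two assertions of \fref{lem:JointMetric} in turn, working always over the parameters $\bar a$, so by \fref{lem:JointMetParams} we may as well absorb $\bar a$ into the language and prove the statement for $\fp_{\bar c}$ on $\tS_0(T_{\bar c}) \cong \tS_m(T)$; but it is cleaner to keep $\bar a$ explicit. The comparison with the logic topology is already known: we observed before \fref{thm:PertRN} that $d_{\fp_{\bar a}}$ refines the logic topology, and by definition $\tilde d_{\fp_{\bar a}} \leq d_{\fp_{\bar a}}$, so once that inequality is in hand the ``finer than the logic topology'' part is immediate. (Alternatively, one sees directly that each set $[p^{\fp(\varepsilon)}(\bar x^\varepsilon, \bar a^\varepsilon)]$ is logic-closed, being the projection of a closed set, but here we want the $\tilde d$-balls around $p$ to shrink to $\{p\}$, which follows from the base-of-neighbourhoods claim together with $\bigcap_{\varepsilon>0}[p^{\fp(\varepsilon)}(\bar x^\varepsilon,\bar a^\varepsilon)] = \{p\}$ by compactness.)

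First I would establish the two inequalities $\tilde d_{\fp_{\bar a}} \leq d$ and $\tilde d_{\fp_{\bar a}} \leq d_{\fp_{\bar a}}$. For $\tilde d_{\fp_{\bar a}} \leq d_{\fp_{\bar a}}$: if $d_{\fp_{\bar a}}(p,q) < \varepsilon$ then by \fref{prp:PertRadResp}(3) applied inside $T_{\bar a}$ (or rather: by \fref{lem:SymPertExt}, since $\fp_{\bar a}(\varepsilon)$ is a symmetric perturbation radius) there are models and a $\fp_{\bar a}(\varepsilon)$-bi-perturbation $f$ with $f(\bar a\text{-realisation of }p) = $ a realisation of $q$; unwinding the description of $\BiPert_{\fp_{\bar c}(\varepsilon)}$ recalled just above \fref{dfn:JointMetric} gives exactly conditions (1)--(3) of that definition, so $\tilde d_{\fp}(\tp(\bar b,\bar a),\tp(\bar c,\bar a)) \leq \varepsilon$ and we invoke \fref{lem:JointMetParams}. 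For $\tilde d_{\fp_{\bar a}} \leq d$: if $d(p,q) < \varepsilon$, realise $p$ by $\bar b$ and $q$ by $\bar c$ with $d(\bar b,\bar c) < \varepsilon$ in a common model $M$ (using elementary amalgamation and the fact that $p,q$ are over $\bar a$), and take $f = \id_M$; then $f \in \BiPert_{\fp(\varepsilon)}(M,M)$ trivially (the identity is always an $\id$-perturbation, hence an $\fp(\varepsilon)$-one), and $|d(c,b_i) - d(c,c_i)| \leq d(b_i,c_i) \leq d(\bar b,\bar c) < \varepsilon$ for all $c$, $i$; so the definition of $\tilde d_\fp$ is met.

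Second, and this is the main content, I would prove that $\{[p^{\fp(\varepsilon)}(\bar x^\varepsilon,\bar a^\varepsilon)] : \varepsilon > 0\}$ is a base of $\tilde d_{\fp_{\bar a}}$-neighbourhoods of $p$. One direction — that each such set contains a $\tilde d_{\fp_{\bar a}}$-ball around $p$ — follows from the inequalities just proved: if $\tilde d_{\fp_{\bar a}}(p,q)$ is small enough (say $< \delta$ with $\delta = \delta_{\fp(\varepsilon)}(\varepsilon/2)$ and $\delta \le \varepsilon/2$), then combining the two estimates built into \fref{dfn:JointMetric}/\fref{lem:JointMetParams}, one has a realisation $\bar c$ of $q$ and a realisation $\bar b$ of $p$ and an $\fp(\delta)$-bi-perturbation moving $\bar b$ to within $\varepsilon$ of $\bar c$ (in $d$) and $\bar a$ to within $\varepsilon$ of itself, whence $q \in [p^{\fp(\varepsilon)}(\bar x^\varepsilon,\bar a^\varepsilon)]$ after the usual uniform-continuity bookkeeping with the inclusions $[p^\rho(\bar x^\delta)] \subseteq [p(\bar x^\varepsilon)]^\rho$ noted in the discussion before \fref{dfn:PertOSatIsom}. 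The converse direction — that $[p^{\fp(\varepsilon)}(\bar x^\varepsilon,\bar a^\varepsilon)]$ is itself a $\tilde d_{\fp_{\bar a}}$-neighbourhood of $p$, i.e. contains some $B_{\tilde d}(p,\varepsilon')$ — is where I expect the real work: a priori $[p^{\fp(\varepsilon)}(\bar x^\varepsilon,\bar a^\varepsilon)]$ need not be $\tilde d$-open, so the statement should be read as: for a base, it suffices that the family is cofinal among $\tilde d$-neighbourhoods and each member is a neighbourhood, and I would instead show directly that $[p^{\fp(\varepsilon/3)}(\bar x^{\varepsilon/3},\bar a^{\varepsilon/3})] \subseteq B_{\tilde d_{\fp_{\bar a}}}(p,\varepsilon) \subseteq [p^{\fp(\varepsilon)}(\bar x^\varepsilon,\bar a^\varepsilon)]$ for a suitable comparison of radii, sandwiching the $\tilde d$-balls between two members of the family. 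The containment $B_{\tilde d}(p,\varepsilon) \subseteq [p^{\fp(\varepsilon)}(\bar x^\varepsilon,\bar a^\varepsilon)]$ is the previous paragraph; for $[p^{\fp(\varepsilon')}(\bar x^{\varepsilon'},\bar a^{\varepsilon'})] \subseteq B_{\tilde d}(p,\varepsilon)$ one must, given $q$ in the left-hand set, manufacture models $M \ni \bar a,\bar b$ with $\bar a\bar b \models$ (the relevant perturbation of) $p$-data and $N \ni \bar b',\bar a'$ with $\bar b' \models q$, together with an $\fp(\varepsilon)$-bi-perturbation between them respecting distances to the $\bar a$-coordinates to within $\varepsilon$ — this is produced by the same ultraproduct/back-and-forth machinery as in \fref{lem:SymPertExt} and \fref{lem:SymPertRadGenComp}, now carried out in $T_{\bar a}$ and keeping track of the extra unary predicates $d(x,a_i)$, which the construction of $\fp_{\bar c}^0$ was designed to accommodate. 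The main obstacle is thus the careful choice of the three or four $\varepsilon$-scales and the verification that the bi-perturbation obtained from the back-and-forth genuinely lands in $\fp_{\bar a}(\varepsilon)$ and not merely in some larger pre-radius; the symmetry and triangle inequality built into \fref{dfn:PertSys}, together with \fref{lem:SymPertRadGenComp}, are exactly what make this go through.
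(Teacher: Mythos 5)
Your overall decomposition is the same as the paper's: establish the sandwich
\begin{gather*}
  [p^{\fp(\varepsilon')}(\bar x^{\varepsilon'},\bar a^{\varepsilon'})]
  \subseteq B_{\tilde d_{\fp_{\bar a}}}(p,\varepsilon)
  \subseteq [p^{\fp(\varepsilon)}(\bar x^\varepsilon,\bar a^\varepsilon)],
\end{gather*}
and read off the neighbourhood-base claim and the topology comparison. But there is one genuine gap that runs through several of your steps: you never establish, and your toolkit does not supply, the fact that for every $\varepsilon > 0$ there is $\varepsilon' > 0$ such that $\fp(\varepsilon')$-perturbations move distances by at most $\varepsilon/2$. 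This is proved in the paper by a compactness argument on $\tS_2(T)^2$ (the set $\{(q,q')\colon |d(x,y)^q - d(x,y)^{q'}| \geq \varepsilon/2\}$ is closed and misses the diagonal, so it misses $\fp_2(\varepsilon')$ for $\varepsilon'$ small). Without it your ``$\varepsilon/3$'' choice of radius is simply wrong: nothing in \fref{dfn:PertSys} says an $\fp(\varepsilon/3)$-perturbation changes $d$-values by at most $\varepsilon/3$ — think of a perturbation system allowing $d$ to dilate by $e^{\pm\varepsilon}$ on a space of diameter $10$.

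The same gap invalidates your argument for $\tilde d_{\fp_{\bar a}} \leq d_{\fp_{\bar a}}$. A $\fp_{\bar a}(\varepsilon)$-bi-perturbation $f\colon (M,\bar a) \to (N,\bar a')$ controls, by the construction of $\fp_{\bar c}^0$, the quantities $|d(e,a_i) - d(f(e),a'_i)|$; but condition~(3) of \fref{dfn:JointMetric} applied to types in $\tS_n(\bar a)$ asks for the estimate $|d(e,b_j) - d(f(e),f(b_j))| \leq \varepsilon$ at the \emph{new} tuple $\bar b$, which is not among the constants of $T_{\bar a}$. So ``unwinding the description of $\BiPert_{\fp_{\bar c}(\varepsilon)}$'' does not give you all three conditions; the missing estimate is precisely the one the compactness argument provides. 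Relatedly, you assign the ``real work'' to the ultraproduct/back-and-forth machinery of \fref{lem:SymPertExt} and \fref{lem:SymPertRadGenComp}; this is a misdiagnosis. Producing a bi-perturbation witnessing $q \in p^{\fp(\varepsilon')}$ is cheap once symmetry of $\fp(\varepsilon')$ is known. What needs to be verified is that this bi-perturbation (extended by the nearby tuples at $d$-distance $\leq\varepsilon'$) satisfies the distance-to-constants constraint — and that again comes down to the compactness fact above, not to back-and-forth.

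Two smaller points. First, your opening derivation of ``finer than the logic topology'' from ``$\tilde d_{\fp_{\bar a}} \leq d_{\fp_{\bar a}}$ and $d_{\fp_{\bar a}}$ refines the logic topology'' runs in the wrong direction: a coarser metric may induce a strictly coarser topology, and coarser-than-a-refinement need not still refine. Your parenthetical alternative (each $[p^{\fp(\varepsilon)}(\bar x^\varepsilon,\bar a^\varepsilon)]$ is logic-closed, they intersect to $\{p\}$, compactness separates a logic-closed $K \not\ni p$) is the correct route and matches the paper's. Second, your argument for $\tilde d_{\fp_{\bar a}} \leq d$ via the identity bi-perturbation is fine as stated.

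In short: the plan is right and the auxiliary lemmas are the right ones, but the central step is the distance-distortion compactness argument on $\tS_2(T)$, and your proposal omits it, attempting to substitute for it a numerically specified radius and back-and-forth machinery that do not do that job.
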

\begin{proof}
  Let us start by showing that for every $\varepsilon > 0$ there is
  $\varepsilon' > 0$
  such that:
  \begin{align*}
    & [p^{\fp(\varepsilon')}(\bar x^{\varepsilon'},\bar a^{\varepsilon'})]
    \subseteq B_{\tilde d_{\fp_{\bar a}}}(p(\bar x,\bar a),\varepsilon)
    \subseteq [p^{\fp(\varepsilon)}(\bar x^\varepsilon,\bar a^\varepsilon)].
  \end{align*}
  Let us first consider the case without parameters.
  The set
  $\bigl\{
  (q(x,y),q'(x,y)) \in \tS_2(T)
  \colon |d(x,y)^q - d(x,y)^{q'}| \geq \varepsilon/2
  \bigr\}$
  is closed and disjoint of the diagonal, so by compactness there is
  $\varepsilon' > 0$ such for all $(q,q') \in \fp_2(\varepsilon')$:
  $|d(x,y)^q - d(x,y)^{q'}| \leq \varepsilon/2$.
  We may of course assume that $\varepsilon' \leq \varepsilon/2$, and the first
  inclusion follows.
  The second inclusion is immediate from the definition
  of $\tilde d_\fp$.

  The case over parameters $\bar a$ follows from the case without
  parameters and the fact that by
  \fref{lem:JointMetParams}:
  $$
  B_{\tilde d_{\fp_{\bar a}}}(p(\bar x,\bar a),\varepsilon)
  =
  \big\{ q(\bar x,\bar a) \in \tS_n(\bar a)\colon
  q(\bar x,\bar y) \in B_{\tilde d_\fp}(p,\varepsilon) \big\}.
  $$

  Finally, let $K \subseteq \tS_n(T)$ be closed in the logic topology and
  $q \notin K$.
  Then there is $\varepsilon > 0$ such that
  $q^{\fp(\varepsilon)} \cap K = \emptyset$, and since
  $q^{\fp(\varepsilon)}$ is closed in the logic topology there is also
  $\varepsilon' > 0$
  such that $[q^{\fp(\varepsilon)}(\bar x^{\varepsilon'})] \cap K = \emptyset$.
  Letting $\varepsilon'' = \min\{\varepsilon,\varepsilon'\}$ we see that
  $[q^{\fp(\varepsilon'')}(\bar x^{\varepsilon''})] \cap K = \emptyset$.
  Therefore $K$ is $\tilde d_\fp$-closed.
  This shows that $\tilde d_\fp$ refines the logic topology.
  It is clearly coarser than both $d$ and $d_\fp$.
  Substituting $T_{\bar a}$ for $T$ in the last argument we get the
  case with parameters.
\end{proof}

Thus we can restate \fref{thm:PertRN} as:
\begin{thm}
  Let $T$ be a complete countable theory.
  Then the following are equivalent:
  \begin{enumerate}
  \item The theory $T$ is $\fp$-$\aleph_0$-categorical.
  \item For every $n \in \bN$, finite $\bar a$,
    $p \in \tS_n(\bar a)$ and $\varepsilon > 0$,
    the $\varepsilon$-ball
    $B_{\tilde d_{\fp_{\bar a}}}(p,\varepsilon)$ has non empty
    interior in the logic topology on $\tS_n(\bar a)$.
  \item Same restricted to $n = 1$.
  \end{enumerate}
\end{thm}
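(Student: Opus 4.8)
The plan is to deduce this restated theorem directly from \fref{thm:PertRN} together with \fref{lem:JointMetric}, since the only difference between the two formulations is the language in which the neighbourhood condition is phrased. In \fref{thm:PertRN}, condition (2) asks that $[p^{\fp(\varepsilon)}(\bar x^\varepsilon,\bar a^\varepsilon)]$ have non empty interior in $\tS_n(\bar a)$ for all $n$, finite $\bar a$, $p \in \tS_n(\bar a)$ and $\varepsilon > 0$; the restated theorem asks instead that every $\tilde d_{\fp_{\bar a}}$-ball $B_{\tilde d_{\fp_{\bar a}}}(p,\varepsilon)$ have non empty interior in the logic topology. So the whole content of the reduction is to check that these two families of conditions are equivalent as $\varepsilon$ ranges over the positive reals.

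First I would invoke the sandwich established in the proof of \fref{lem:JointMetric}: for every $\varepsilon > 0$ there is $\varepsilon' > 0$ with
\begin{align*}
  [p^{\fp(\varepsilon')}(\bar x^{\varepsilon'},\bar a^{\varepsilon'})]
  \subseteq B_{\tilde d_{\fp_{\bar a}}}(p(\bar x,\bar a),\varepsilon)
  \subseteq [p^{\fp(\varepsilon)}(\bar x^\varepsilon,\bar a^\varepsilon)].
\end{align*}
From the right-hand inclusion, if $B_{\tilde d_{\fp_{\bar a}}}(p,\varepsilon)$ has non empty interior then so does the larger set $[p^{\fp(\varepsilon)}(\bar x^\varepsilon,\bar a^\varepsilon)]$; hence condition (2) of the restated theorem implies condition (2) of \fref{thm:PertRN}. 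Conversely, given the hypothesis of \fref{thm:PertRN} and an arbitrary $\varepsilon > 0$, pick the corresponding $\varepsilon'$; then $[p^{\fp(\varepsilon')}(\bar x^{\varepsilon'},\bar a^{\varepsilon'})]$ has non empty interior, and by the left-hand inclusion this interior is contained in $B_{\tilde d_{\fp_{\bar a}}}(p,\varepsilon)$, so the latter has non empty interior as well. This gives the equivalence of the two (2)'s, and the same argument verbatim (fixing $n = 1$ throughout) gives the equivalence of the two (3)'s.

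With those equivalences in hand the restated theorem is immediate: (1) $\Leftrightarrow$ (2) $\Leftrightarrow$ (3) transfers from \fref{thm:PertRN} term by term. I do not anticipate a genuine obstacle here — the only point requiring a little care is that the constant $\varepsilon'$ produced in \fref{lem:JointMetric} depends on $\varepsilon$ but is uniform in $p$ and $\bar a$ (it comes from a compactness argument on $\tS_2(T)$, pushed to the parametrised setting via \fref{lem:JointMetParams}), so that quantifying over all $p$, all $\bar a$ and all $\varepsilon$ causes no trouble. One should also note in passing that "non empty interior in $\tS_n(\bar a)$" in \fref{thm:PertRN} already refers to the logic topology, so no reconciliation of topologies is needed beyond what \fref{lem:JointMetric} supplies. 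A one-line proof stating "Immediate from \fref{thm:PertRN} and the inclusions in the proof of \fref{lem:JointMetric}" would in fact suffice, but spelling out the two inclusions as above makes the argument self-contained.
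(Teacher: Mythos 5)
Your proposal is correct and takes essentially the same approach the paper implicitly takes: the paper introduces this statement as a restatement of \fref{thm:PertRN} justified by \fref{lem:JointMetric}, which asserts precisely that the sets $[p^{\fp(\varepsilon)}(\bar x^\varepsilon,\bar a^\varepsilon)]$ form a base of $\tilde d_{\fp_{\bar a}}$-neighbourhoods of $p$, and you have simply unpacked that base equivalence via the two sandwich inclusions. One minor remark: the uniformity of $\varepsilon'$ in $p$ and $\bar a$ that you flag as "requiring a little care" is in fact not needed, since the hypotheses are universally quantified over $p$ and $\bar a$ anyway, so a pointwise choice of $\varepsilon'$ already suffices.
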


The statement of the result in terms of non empty interior may sound a
little weird, as the non perturbed Ryll-Nardzewski theorem tells us
that $T$ is $\aleph_0$-categorical if and only if the metric $d$ coincides
with the logic topology.
In order to explain this apparent discrepancy let us make a few
more observations.

First, the coincidence of the logic topology with the metric
$\tilde d_\fp$ is a sufficient condition for $T$ to be
$\fp$-$\aleph_0$-categorical.
In this case it suffices to check
$\tS_n(T)$ alone (i.e., no need to consider parameters).
\begin{prp}
  \label{prp:PertRNSuff}
  Assume that $T$ is countable and complete, and
  $\tilde d_\fp$ coincides with the logic topology on
  $\tS_n(T)$ for all $T$.
  Then $T$ is $\fp$-$\aleph_0$-categorical.
\end{prp}
\begin{proof}
  Let $M \models T$, and let
  $\bar a \in M$, $p(x,\bar a) \in \tS_1(\bar a)$,
  and $\varepsilon > 0$.
  By assumption $B_{\tilde d_\fp}(p(x,\bar y),\varepsilon)$ is a
  neighbourhood of $p$, so there is a formula $\varphi(x,\bar y)$
  such that
  \begin{gather*}
    p \in [\varphi = 0]
    \subseteq [\varphi < 1/2]
    \subseteq B_{\tilde d_\fp}(p(x,\bar y),\varepsilon')
    \subseteq [p^{\fp(\varepsilon)}(x^\varepsilon,\bar y^\varepsilon)].
  \end{gather*}
  Therefore $[\varphi(x,\bar a) < 1/2]$ is a non empty open subset of
  $\tS_1(\bar a)$ (as it contains $p(x,\bar a)$),
  and is therefore realised in $M$.
  Thus $p^{\fp(\varepsilon)}(x^\varepsilon,\bar a^\varepsilon)$ is
  realised in $M$.
\end{proof}

We should point out that the consideration of parameters in
\fref{thm:PertRN} is unavoidable.
Indeed, if $p(x,\bar a) \in \tS_{1}(\bar a)$ and we only assume that
$B_{\tilde d_\fp}(p(x,\bar y),\varepsilon)$
has non empty interior in $\tS_{n+1}(T)$,
which need not necessarily contain
$p(x,\bar y)$, it may
happen that no type $q(x,\bar y)$ in this interior is consistent with
$r(\bar y)  = \tp(\bar a)$, so pulling up to $\tS_1(\bar a)$ we may
end up with an empty set.

The sufficient condition in \fref{prp:PertRNSuff} seems far
more convenient and natural than the one in \fref{thm:PertRN}, and one
might hope to show that it is also necessary.
The following example shows that this is impossible.
Roughly speaking, this example says that if the ``if and only
if'' variant of \fref{prp:PertRNSuff} were true, we could prove
Vaught's no-two-models theorem, which fails in
continuous first order logic.
\begin{exm}
  \label{exm:LpCntrExm}
  Let $T$ be the theory of atomless $L^p$-Banach lattices for some
  fixed $p \in [1,\infty)$,
  studied in \cite{BenYaacov-Berenstein-Henson:LpBanachLattices}.
  It is known that $T$ is $\aleph_0$-categorical, all of its separable
  models being isomorphic to $L^p[0,1]$.

  Let $f$ be any positive function of norm $1$ (this
  determines $\tp(f)$), say $f = \chi_{[0,1]}$, then
  $T_f$ has precisely two non isomorphic separable models, namely
  $(L^p[0,1],\chi_{[0,1]})$ and $(L^p[0,2],\chi_{[0,1]})$.
  (The theory $T_f$ is $\aleph_0$-categorical up to perturbations of
  the new constant $f$, but that's not what we are looking for).
  Let $g$ be another positive function of norm $1$ such that
  $f\wedge g = 0$ (this determines $\tp(f,g)$).
  Then again, $T_{f,g}$ has precisely two separable models,
  $(L^p[0,2],\chi_{[0,1]},\chi_{[1,2]})$ and
  $(L^p[0,3],\chi_{[0,1]},\chi_{[1,2]})$.

  Let $\fp$ be the identity perturbation system for $T_f$,
  and thus $\fp_g$ is the perturbation system for $T_{f,g}$ that
  allows to perturb $g$ while preserving all the rest untouched.
  Then the two models above are $\fp_g$-isomorphic, so
  $T_{f,g}$ is $\fp_g$-$\aleph_0$-categorical.
  Let $\pi_n\colon \tS_n(T_{f,g}) \to \tS_n(T_f)$ be the reduct projection.
  As $\fp$ is the identity perturbation on $T_f$,
  $\tilde d_\fp = d$ on $\tS_n(T_f)$.
  Therefore, if $U \subseteq \tS_n(T_f)$ is $d$-open then
  $\pi_n^{-1}(U) \subseteq \tS_n(T_{f,g})$ is $\tilde d_{\fp_g}$-open.

  But $T_f$ is not $\aleph_0$-categorical, so the metric $d$ defines a
  non compact topology on $\tS_n(T)$, whereby
  $\tilde d_{\fp_g}$ defines a non compact topology on
  $\tS_n(T_{f,g})$,
  which in particular cannot coincide with the logic topology, even 
  though $T_{f,g}$ is $\fp_g$-$\aleph_0$-categorical.
\end{exm}

One last point arises from a comparison of \fref{thm:PertRN} with the
unperturbed Ryll-Nardzewski Theorem for continuous logic.
The latter characterises unperturbed $\aleph_0$-categoricity
by the coincidence of the logic topology with the metric, and thus does
not seem to be follow as a special case of \fref{thm:PertRN}.
To see that it actually does, we need to explore some further
properties perturbation metrics may have.

Let us start by recalling properties of the standard metric $d$ on
$\tS_n(T)$.
We observe in \cite{BenYaacov-Usvyatsov:CFO}
that the metric $d$ has the following
properties:
\begin{enumerate}
\item It refines the logic topology.
\item If $F = [p(\bar x)] \subseteq \tS_n(T)$ is closed, then so is
  $F^\varepsilon = \{p\colon d(p,F) \leq \varepsilon\} = [p(\bar x^\varepsilon)]$.
\item For every injective $\sigma\colon n\to m$, $p \in \tS_n(T)$,
  $q \in \tS_m(T)$:
  $$d(p,\sigma^*(q)) = d({\sigma^*}^{-1}(p),q).$$
\end{enumerate}
A perturbation metric has all these properties as well, and in fact
satisfies the last one also for $\sigma$ which is not injective.
One last interesting property of $(\tS_n(T),d)$ is analogous to
the second property:
\begin{lem}
  \label{lem:DistIsOpen}
  If $U \subseteq \tS_n(T)$ is open, then so is
  $U^{<\varepsilon} = \{p\colon d(p,U) < \varepsilon\}$.
\end{lem}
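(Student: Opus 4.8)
The plan is to show directly that every point of $U^{<\varepsilon}$ has a logic-open neighbourhood contained in $U^{<\varepsilon}$. Since the sets $[\chi<0]$, for $\chi$ an $\cL$-formula, form a basis for the logic topology on $\tS_n(T)$, it suffices to produce a neighbourhood of this special shape. I will use freely that $d(p,q)\le\delta$ means exactly that some model of $T$ realises both $p$ and $q$ by $n$-tuples at distance $\le\delta$, and that for a formula $\chi(\bar y)$ one has $q\in[\chi<0]$ iff $\chi^q<0$.

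So fix $p_0\in U^{<\varepsilon}$, i.e.\ $d(p_0,U)<\varepsilon$. First I would choose $q_0\in U$ and a real number $\varepsilon''$ with $d(p_0,q_0)<\varepsilon''<\varepsilon$, and then, using that $U$ is open, a formula $\chi(\bar y)$ with $q_0\in[\chi<0]\subseteq U$ (so $\chi^{q_0}<0$). Next I would set
\[
  \theta(\bar x)\ =\ \inf_{\bar y}\ \max\bigl(\chi(\bar y),\ \bigl(\max_{i<n} d(x_i,y_i)\bigr)-\varepsilon''\bigr),
\]
which is again an $\cL$-formula: the metric has bounded range, so $t\mapsto t-\varepsilon''$ composed with the finitary $\max$ and then with $\inf_{\bar y}$ is a legitimate operation on formulas. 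Hence $V=[\theta<0]$ is logic-open, and it remains to check $p_0\in V\subseteq U^{<\varepsilon}$.

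For $p_0\in V$: pick a model $M$ and tuples $\bar a\models p_0$, $\bar b\models q_0$ in $M$ with $\max_{i<n}d(a_i,b_i)<\varepsilon''$, which is possible as $d(p_0,q_0)<\varepsilon''$. Substituting $\bar b$ for $\bar y$ gives $\theta^{p_0}\le\max\bigl(\chi(\bar b),(\max_i d(a_i,b_i))-\varepsilon''\bigr)<0$, since both arguments of the $\max$ are strictly negative — this is exactly why honest (rather than truncated) subtraction is needed in $\theta$. Conversely, if $p\in V$ then $\theta^p<0$, so in a sufficiently saturated model with $\bar a\models p$ there is $\bar b$ with $\chi(\bar b)<0$ and $\max_i d(a_i,b_i)<\varepsilon''$; then $q:=\tp(\bar b)\in[\chi<0]\subseteq U$ and $d(p,q)\le\max_i d(a_i,b_i)<\varepsilon''<\varepsilon$, whence $d(p,U)<\varepsilon$ and $p\in U^{<\varepsilon}$. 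Thus $p_0\in V\subseteq U^{<\varepsilon}$, and $U^{<\varepsilon}$ is open.

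The genuinely delicate step is the construction of the auxiliary formula $\theta$ (together with the bookkeeping $d(p_0,q_0)<\varepsilon''<\varepsilon$): it must be an honest formula, so that $[\theta<0]$ is logic-open, while its negativity at a type $p$ must encode precisely that $p$ has a realisation within $\varepsilon''$ of a realisation of some type lying in $U$. Once $\theta$ is chosen correctly both inclusions are essentially immediate, and the only other routine point is the standard passage from $\theta^p<0$ to an actual witnessing tuple $\bar b$ in a saturated model via approximation of the infimum.
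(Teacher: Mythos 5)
Your proof is correct and takes essentially the same approach as the paper: both reduce to a basic open set $[\chi<\cdot]\subseteq U$ and then exhibit a basic open neighbourhood inside $U^{<\varepsilon}$ as a sublevel set of the $\cL$-formula $\inf_{\bar y}\bigl(\chi(\bar y)\vee(d(\bar x,\bar y)\ \text{shifted by}\ \varepsilon)\bigr)$. The paper phrases this globally, writing $U^{<\varepsilon}=[\inf_{\bar y}(\varphi(\bar y)\vee d(\bar x,\bar y))<\varepsilon]$ for $U=[\varphi<\varepsilon]$, while you argue pointwise and absorb the threshold $\varepsilon''$ into the formula via honest subtraction; the difference is purely cosmetic.
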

\begin{proof}
  It suffices to show this for a basis of open sets, i.e., for sets of
  the form $U = [\varphi(\bar x)<\varepsilon]$.
  But then
  $U^{<\varepsilon} = [\inf_{\bar y} (\varphi(\bar y) \vee d(\bar x,\bar y)) < \varepsilon]$ is open.
\end{proof}
For lack of a better name, let us call provisionally a metric on a
topological space \emph{open} if it satisfies the property of
\fref{lem:DistIsOpen}.

\begin{dfn}
  Let $\fp$ be a perturbation system for $T$.
  \begin{enumerate}
  \item We say that $\fp$ is  \emph{open} if $d_\fp$ is open on
    $\tS_n(T)$ for all $n$.
  \item We say that $\fp$ is \emph{weakly open} if for all
    $\varepsilon > 0$
    and $n \in \bN$ there is $\delta > 0$ such that for every open set
    $U \subseteq \tS_n(T)$:
    \begin{gather*}
      U^{\tilde d_\fp<\delta}
      \subseteq \bigl( U^{\tilde d_\fp < \varepsilon} \bigr)^\circ.
    \end{gather*}
    (Where
    $U^{\tilde d_\fp<\delta}
    = \{p\colon \tilde d_\fp(p,U) < \delta\}$.)
  \end{enumerate}
\end{dfn}

\begin{lem}
  Let $\fp$ be a perturbation system.
  \begin{enumerate}
  \item $\fp$ is weakly open if and only if for every
    $\varepsilon > 0$ and $n \in \bN$ there is $\delta > 0$ such that
    for every open $U \subseteq \tS_n(T)$:
    \begin{gather*}
      U^{\fp(\delta)}
      \subseteq \bigl( U^{\tilde d_\fp < \varepsilon} \bigr)^\circ.
    \end{gather*}
  \item If $\fp$ is open then it is weakly open.
  \item If $\tilde d_\fp$ is open on $\tS_n(T)$ for all $n$ then $\fp$
    is weakly open.
  \end{enumerate}
\end{lem}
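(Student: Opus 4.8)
The plan is to establish part (1) first and then derive (2) and (3) from it with almost no extra work; parts (2) and (3) are easy once the equivalent ``$\fp(\delta)$-criterion'' of (1) is available.

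The forward implication of (1) is immediate. By \fref{lem:JointMetric} we have $\tilde d_\fp \leq d_\fp$ on every $\tS_n(T)$, so $U^{\fp(\delta)} \subseteq U^{\tilde d_\fp \leq \delta}$ for every open $U \subseteq \tS_n(T)$; hence if $\fp$ is weakly open and $\delta_0$ witnesses weak openness for $\varepsilon$ and $n$, then any $\delta < \delta_0$ satisfies the stated $\fp(\delta)$-criterion. The converse implication of (1) is the substantive point, and the strategy is to shuttle between three neighbourhood systems on $\tS_n(T)$: the $\tilde d_\fp$-balls, the $\fp(\delta)$-neighbourhoods, and the ordinary $d$-balls. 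The key input is extracted from the proof of \fref{lem:JointMetric}, namely $B_{\tilde d_\fp}(p,\varepsilon) \subseteq [p^{\fp(\varepsilon)}(\bar x^\varepsilon)]$; unwound, this says that $\tilde d_\fp(p,q) < \delta$ forces the existence of an intermediate type $r$ with $d_\fp(p,r) < \delta$ and $d(r,q) < \delta$. Consequently, for every open $U$, writing $V^{d<\delta} = \{q\colon d(q,V) < \delta\}$, one gets $U^{\tilde d_\fp < \delta} \subseteq \bigl( U^{\fp(\delta)} \bigr)^{d < \delta}$.

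Now fix $\varepsilon > 0$ and $n$, apply the $\fp(\delta)$-hypothesis at $\varepsilon/2$ to obtain $\delta_0 > 0$ with $U^{\fp(\delta_0)} \subseteq (U^{\tilde d_\fp < \varepsilon/2})^\circ =: V$ for all open $U$, and put $\delta = \min\{\delta_0,\varepsilon/2\}$. Then $U^{\tilde d_\fp < \delta} \subseteq (U^{\fp(\delta)})^{d<\delta} \subseteq V^{d<\delta}$, the set $V^{d<\delta}$ is open by \fref{lem:DistIsOpen}, and $V^{d<\delta} \subseteq U^{\tilde d_\fp < \varepsilon}$: any $q \in V^{d<\delta}$ lies within $d$-distance $\delta$ of some $r \in V \subseteq U^{\tilde d_\fp < \varepsilon/2}$, so the triangle inequality for the metric $\tilde d_\fp$ together with $\tilde d_\fp \leq d$ gives $\tilde d_\fp(q,U) < \delta + \varepsilon/2 \leq \varepsilon$. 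Hence $U^{\tilde d_\fp < \delta} \subseteq (U^{\tilde d_\fp < \varepsilon})^\circ$, i.e.\ $\fp$ is weakly open. For (2): if $\fp$ is open, then for open $U$ the set $\{q\colon d_\fp(q,U) < \varepsilon\}$ is open, contains $U^{\fp(\varepsilon/2)}$, and lies inside $U^{\tilde d_\fp < \varepsilon}$ (again by $\tilde d_\fp \leq d_\fp$); thus the $\fp(\delta)$-criterion of (1) holds with $\delta = \varepsilon/2$, and (1) gives weak openness. For (3): if $\tilde d_\fp$ is open on every $\tS_n(T)$, then $U^{\tilde d_\fp < \varepsilon}$ is already open and equals its own interior, so the definition of weak openness is met with $\delta = \varepsilon$.

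I expect the only genuine obstacle to be the converse half of (1): one must peel the $\fp$-part off a $\tilde d_\fp$-ball using the hypothesis and then re-absorb the residual $d$-part using that $d$ is ``open'' (\fref{lem:DistIsOpen}), and the constants must be chosen in the right order --- $\varepsilon/2$ first, then $\delta_0$, then $\delta$ below both $\delta_0$ and $\varepsilon/2$ --- but no idea beyond \fref{lem:JointMetric} and \fref{lem:DistIsOpen} is needed.
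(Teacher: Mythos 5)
Your proof is correct and follows essentially the same route as the paper: the converse of (1) runs the identical chain $U^{\tilde d_\fp<\delta}\subseteq (U^{\fp(\delta)})^{d<\delta}\subseteq V^{d<\delta}\subseteq (U^{\tilde d_\fp<\varepsilon})^\circ$, using openness of $d$ (\fref{lem:DistIsOpen}) to absorb the residual $d$-thickening, and (2), (3) are deduced from the criterion of (1) exactly as in the paper. The only cosmetic difference is that you argue directly that $V^{d<\delta}$ is an open subset of $U^{\tilde d_\fp<\varepsilon}$, whereas the paper routes through the intermediate inclusion $\bigl((U^{\tilde d_\fp<\varepsilon/2})^\circ\bigr)^{d<\delta}\subseteq\bigl((U^{\tilde d_\fp<\varepsilon/2})^{d<\delta}\bigr)^\circ$; these are the same observation.
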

\begin{proof}
  \begin{enumerate}
  \item For one direction use the fact that
    $U^{\fp(\delta/2)} \subseteq U^{\tilde d_\fp<\delta}$.
    For the other, assume that
    $U^{\fp(\delta)}
    \subseteq \bigl( U^{\tilde d_\fp < \varepsilon/2} \bigr)^\circ$
    and
    $\delta < \varepsilon/2$.
    Then since the metric $d$ is open:
    \begin{gather*}
      U^{\tilde d_\fp<\delta}
      \subseteq
      (U^{\fp(\delta)})^{d<\delta}
      \subseteq
      \left(
        \bigl( U^{\tilde d_\fp < \varepsilon/2} \bigr)^\circ
      \right)^{d<\delta}
      \subseteq
      \left(
        (U^{\tilde d_\fp < \varepsilon/2})^{d<\delta}
      \right)^\circ
      \subseteq
      \bigl( U^{\tilde d_\fp < \varepsilon} \bigr)^\circ.
    \end{gather*}
  \item We use the criterion from the previous item:
    \begin{gather*}
      U^{\fp(\varepsilon/2)}
      \subseteq
      U^{d_\fp<\varepsilon}
      =
      \bigl( U^{d_\fp<\varepsilon} \bigr)^\circ
      \subseteq
      \bigl( U^{\tilde d_\fp<\varepsilon} \bigr)^\circ.
    \end{gather*}
  \item Immediate from the definition.
    \qedhere
  \end{enumerate}
\end{proof}

\begin{thm}
  \label{thm:OpenPertRN}
  Let $T$ be a complete countable theory, $\fp$ a weakly open
  perturbation system.
  Then $T$ is $\fp$-$\aleph_0$-categorical if and only if
  for every $n$, $\tilde d_{\fp}$ coincides with the logic topology on
  $\tS_n(T)$.
\end{thm}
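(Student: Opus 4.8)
The plan is to derive the theorem from \fref{thm:PertRN}, \fref{lem:JointMetric} and \fref{prp:PertRNSuff}, with the weak openness hypothesis doing exactly one job. The direction from right to left needs nothing new: if $\tilde d_\fp$ coincides with the logic topology on every $\tS_n(T)$ then $T$ is $\fp$-$\aleph_0$-categorical by \fref{prp:PertRNSuff}, and weak openness plays no role in this half.

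For the converse I would argue as follows. Suppose $T$ is $\fp$-$\aleph_0$-categorical and $\fp$ is weakly open. By \fref{lem:JointMetric} the metric $\tilde d_\fp$ already refines the logic topology on each $\tS_n(T)$, so it suffices to prove the reverse inclusion of topologies, i.e.\ that every $p \in \tS_n(T)$ has, for each $\varepsilon > 0$, a logic-open neighbourhood contained in $B_{\tilde d_\fp}(p,\varepsilon)$ (these closed balls being a $\tilde d_\fp$-neighbourhood base of $p$). Fix $p$ and $\varepsilon$; after replacing $\varepsilon$ by $\varepsilon/2$ it is enough to land inside $B_{\tilde d_\fp}(p,2\varepsilon)$. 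First I would invoke the definition of weak openness to fix $\delta \le \varepsilon$ with $U^{\tilde d_\fp < \delta} \subseteq \bigl(U^{\tilde d_\fp < \varepsilon}\bigr)^\circ$ for every logic-open $U \subseteq \tS_n(T)$. Next, applying \fref{thm:PertRN} with no parameters ($\bar a = \emptyset$) together with the inclusion $[p^{\fp(\eta')}(\bar x^{\eta'})] \subseteq B_{\tilde d_\fp}(p,\delta/2)$ extracted from the proof of \fref{lem:JointMetric}, I obtain for a suitable $\eta' > 0$ a non-empty logic-open set $V_0 \subseteq B_{\tilde d_\fp}(p,\delta/2)$. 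Picking any $q \in V_0$ gives $\tilde d_\fp(p,V_0) \le \tilde d_\fp(p,q) \le \delta/2 < \delta$, so $p \in V_0^{\tilde d_\fp < \delta} \subseteq \bigl(V_0^{\tilde d_\fp < \varepsilon}\bigr)^\circ$, a logic-open set containing $p$. Finally the triangle inequality yields $V_0^{\tilde d_\fp < \varepsilon} \subseteq B_{\tilde d_\fp}(p,\delta/2 + \varepsilon) \subseteq B_{\tilde d_\fp}(p,2\varepsilon)$, so $\bigl(V_0^{\tilde d_\fp < \varepsilon}\bigr)^\circ$ is the neighbourhood we want and the two topologies coincide on $\tS_n(T)$.

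The conceptual heart of the argument — which I would highlight rather than call an obstacle — is the gap between \fref{thm:PertRN}, which only says that $\tilde d_\fp$-balls have non-empty interior, and the assertion that their \emph{centres} lie in the interior, which is what coincidence of the topologies means. Weak openness is precisely the hypothesis bridging this gap: it lets one inflate the non-empty open set $V_0$ lying near $p$ into a logic-open set that actually contains $p$ without enlarging the $\tilde d_\fp$-radius beyond control. The only genuinely delicate part is keeping the strict versus non-strict balls and the nested $\varepsilon$'s straight so the triangle-inequality estimate closes up, which I do not expect to cause any real trouble.
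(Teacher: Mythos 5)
Your argument is correct and is essentially the paper's own: both halves are identical (right-to-left via \fref{prp:PertRNSuff}; left-to-right by feeding the non-empty open piece of $B_{\tilde d_\fp}(p,\delta/2)$, whose non-emptiness comes from \fref{thm:PertRN} together with \fref{lem:JointMetric}, into the weak-openness inclusion and chasing the triangle inequality). The only cosmetic differences are your renaming of $\varepsilon$ as $\varepsilon/2$ at the outset and your use of a generic open set $V_0$ rather than $B_{\tilde d_\fp}(p,\delta/2)^\circ$ itself; the structure of the chain of inclusions is the same.
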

\begin{proof}
  Right to left is by \fref{prp:PertRNSuff}, so we prove left to
  right.

  Assume that $T$ is $\fp$-$\aleph_0$-categorical.
  Fix $p \in \tS_n(T)$ and $\varepsilon > 0$.
  Then by definition there is $\delta > 0$ such that for every open set
  $U \subseteq \tS_n(T)$:
  $U^{\tilde d_\fp < \delta}
  \subseteq \bigl( U^{\tilde d_\fp < \varepsilon/2} \bigr)^\circ$.
  We may also assume that $\delta < \varepsilon$.

  Let $U = B_{\tilde d_\fp}(p,\delta/2)^\circ$, so
  $U \neq \emptyset$ by \fref{thm:PertRN}.
  Then:
  \begin{gather*}
    p
    \in U^{\tilde d_\fp < \delta}
    \subseteq
    \bigl( U^{\tilde d_\fp < \varepsilon/2} \bigr)^\circ
    \subseteq
    \left(
      B_{\tilde d_\fp}(p,\delta/2)^{\tilde d_\fp < \varepsilon/2}
    \right)^\circ
    \subseteq
    \bigl( B_{\tilde d_\fp}(p,\varepsilon) \bigr)^\circ
  \end{gather*}
  Therefore $B_{\tilde d_\fp}(p,\varepsilon)$ is a logic neighbourhood of $p$
  for all $p$ and $\varepsilon > 0$.
  Since $\tilde d_\fp$ refines the logic topology, they must
  coincide.
\end{proof}

\begin{exm}
  The identity perturbation system is open.
\end{exm}

\begin{cor}[Henson's unperturbed Ryll-Nardzewski Theorem]
  A complete countable theory $T$ is $\aleph_0$-categorical if and
  only if the standard metric $d$ coincides with
  the logic topology on $\tS_n(T)$, for all $n \in \bN$.
\end{cor}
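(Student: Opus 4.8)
The plan is to derive the corollary as the instance of \fref{thm:OpenPertRN} in which $\fp$ is the (constant) identity perturbation system $\id$; essentially all that is needed is to unwind definitions, and I would work under the standing convention of this section that $T$ has a non-compact model (if every model of $T$ is compact both sides of the equivalence hold trivially). First I would observe that $\id$-$\aleph_0$-categoricity of $T$ is just ordinary $\aleph_0$-categoricity: for $\fp = \id$ we have $\fp(\varepsilon) = \id$ for all $\varepsilon$, and an $\id$-bi-perturbation is a bijective elementary embedding with elementary inverse, i.e.\ an isomorphism, so ``$\id$-isomorphic'' means ``isomorphic''. Next, $\id$ is weakly open: the example just before the corollary records that $\id$ is open, and an open perturbation system is weakly open (as noted in the lemma preceding \fref{thm:OpenPertRN}).

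The one point requiring a (very short) argument is that $\tilde d_{\id}$ equals the standard metric $d$ on each $\tS_n(T)$. The inequality $\tilde d_{\id} \le d$ is contained in \fref{lem:JointMetric}. For the reverse, suppose $\tilde d_{\id}(p,q) < \varepsilon$; by \fref{dfn:JointMetric} there are models $M,N \models T$, tuples $\bar a \models p$ in $M$ and $\bar b \models q$ in $N$, and an isomorphism $f\colon M \to N$ (this being the condition $f \in \BiPert_{\fp(\varepsilon)}(M,N)$ when $\fp = \id$) such that $|d^M(c,a_i) - d^N(f(c),b_i)| \le \varepsilon$ for all $c \in M$ and $i < n$. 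Putting $c = a_i$ gives $d^N(f(a_i),b_i) \le \varepsilon$, hence $d^N(f(\bar a),\bar b) \le \varepsilon$ in the supremum metric; and since $f$ is an isomorphism, $f(\bar a) \models p$, so $d(p,q) \le d^N(f(\bar a),\bar b) \le \varepsilon$. Letting $\varepsilon$ decrease to $\tilde d_{\id}(p,q)$ yields $d(p,q) \le \tilde d_{\id}(p,q)$, so $\tilde d_{\id} = d$.

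With these three observations in hand, \fref{thm:OpenPertRN} applied to $\fp = \id$ reads: $T$ is $\aleph_0$-categorical if and only if, for every $n$, the metric $d$ (which is $\tilde d_{\id}$) coincides with the logic topology on $\tS_n(T)$ --- precisely the statement of the corollary. I do not anticipate any real obstacle; the only things to be careful about are the routine definitional identifications above and confirming that \fref{thm:OpenPertRN} genuinely applies, which it does once we know $\id$ is weakly open.
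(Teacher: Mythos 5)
Your proposal is correct and follows the same route as the paper: the paper's entire proof is the one-line observation ``Since $\tilde d_{\id} = d$'', relying on the reader to see that the identity system is open (hence weakly open) and that $\id$-isomorphism is isomorphism, after which \fref{thm:OpenPertRN} gives the statement. You simply spell out the three definitional unwindings (including a short direct verification of $d \le \tilde d_{\id}$, with the other inequality from \fref{lem:JointMetric}) that the paper leaves implicit.
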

\begin{proof}
  Since $\tilde d_{\id} = d$.
\end{proof}

\begin{exm}
  Let $\bar a \in M \models T$, and let $p = \tp(\bar a)$ be isolated
  (i.e., $d(\bar x,p)$ is a definable predicate).
  Let $\fp$ be the identity perturbation for $T$, and $\fp_{\bar a}$
  as above be a perturbation system for $T_{\bar a}$ allowing to move
  the named parameter.
  Then $\fp_{\bar a}$ is open.
\end{exm}
\begin{proof}
  Exercise.
\end{proof}

Of course, in \fref{exm:LpCntrExm} the type of the new parameter
$\tp(g/f)$ was not isolated.

\section{Perturbations of automorphisms}
\label{sec:PertAut}

We conclude with a few problems concerning perturbations of
automorphisms which motivated the author's initial interest in
perturbations, and which the author therefore finds worthy of future
study.

One such problem comes from the study of the properties of
automorphism groups of classical (i.e., discrete) countable
structures, and in particular of ones whose first order theory is
$\aleph_0$-categorical, viewed as topological groups.
Model-theoretic questions of this kind are treated, for example, in
\cite{Hodges-Hodkinson-Lascar-Shelah:SmallIndex},
while more topologically profound questions are studied by
Kechris and Rosendal \cite{Kechris-Rosendal}.
It is natural to ask whether such
of these results can be generalised to the automorphism groups of
separable continuous structures (with a separably categorical theory).
A very simple instance would to consider the unitary group $U(H)$
where $H$ is a separable infinite dimensional Hilbert space.
Indeed, $U(H)$ is a polish group in the point-wise convergence
topology, also known as the strong operator topology, but it is
quickly revealed that $U(H)$ is just way too big for any of the
properties that Kechris and Rosendal were looking for
(e.g., existence of ample generics) to hold.

This is definitely not a new phenomenon.
We already know that the type space of a continuous theory, viewed as
a pure topological space, is too big.
In order to study
notions such as superstability, $\aleph_0$-stability, or even
local $\varphi$-stability, one needs to take an additional metric
structure into account, considering points (types) up to small
distance.
Such considerations date as far back as
Iovino's definition of $\lambda$-stability and of
superstability in Banach space structures \cite{Iovino:StableBanach}.
Similarly, the automorphism group of a metric structure admits a
natural metric, namely the metric of uniform convergence
(for example, the operator norm on $U(H)$ is the metric of
uniform convergence on the unit ball).
In the terminology of
\cite{BenYaacov:TopometricSpacesAndPerturbations},
$\Aut(M)$ is a \emph{topometric group}, namely a topological group
(in the point-wise convergence topology) which is at the same time a
metric group (in the uniform convergence metric),
such that in addition the distance function $d\colon G^2 \to \bR^+$
is lower semi-continuous in the topology.
One can then restate the question of the
existence of ample generics as follows:
\begin{qst}
  Let $M$ be a separable metric structure, $G = \Aut(M)$.
  Under what assumptions on $M$ can we find, for each $n \in \bN$,
  a tuple $\bar g \in G^n$ such that for every $\varepsilon > 0$,
  the $G$-conjugacy class of the (metric) $\varepsilon$-ball around
  $\bar g$ is (topologically) co-meagre?
  In other words can we find $\bar g$ such that the metric closure of
  the orbit of $\bar g$ is co-meagre?
  In particular, can one prove this is the case if $\Th(M)$ is
  $\aleph_0$-categorical and $\aleph_0$-stable?
\end{qst}
Considering an automorphism $\tau \in \Aut(M)$ up to small distance in
uniform convergence is essentially the same as considering
the structure $(M,\tau)$ up to a small perturbation of $\tau$,
whence the connection with the topic of the present paper.
For the special case of $U(H)$, a positive answer essentially follows
from \cite[Theorem~II.5.8]{Davidson:CsAlgebrasByExample}.
What about the automorphism group of the unique separable atomless
probability algebra?

Another question leading to similar considerations is raised by
Berenstein and Henson
\cite{Berenstein-Henson:ProbabilityWithAutomorphism}.
In this paper they consider
the theory of probability algebras with a generic automorphism, and
ask whether it is superstable
(equivalently, supersimple, since they showed that the theory is
stable).
In classical first order logic the answer would be positive, by a
theorem of Chatzidakis and Pillay
\cite{Chatzidakis-Pillay:GenericStructuresAndSimpleTheories}.
Henson's and Berenstein's was question was nonetheless answered
negatively by the author, raising the following natural
``second best'' question, namely whether the theory of probability algebras
with a generic automorphism is superstable
\emph{up to small perturbations of the automorphism}.
This was subsequently answered positively by the author and Berenstein
\cite{BenYaacov-Berenstein:HilbertProbabilityAutmorphismPerturbation},
where we show moreover that up to perturbations of the automorphism,
the theory is $\aleph_0$-stable.

\begin{qst}
  Let $T$ be a superstable continuous theory.
  Let
  \begin{gather*}
    T_\sigma = T \cup \{\text{``$\sigma$ is an automorphism''}\}.
  \end{gather*}
  Assume furthermore that $T_\sigma$ has a model companion $T_A$.
  Is $T_A$ supersimple up to small perturbations of $\sigma$?
\end{qst}
And in fact,
\begin{qst}
  What should it mean precisely
  for a theory to be supersimple up to small
  perturbations?
\end{qst}

Regarding the last question it should be pointed out that there are
several natural candidates for the definition of
``$a$ is independent up to distance $\varepsilon$ from $B$ over $A$''
(denoted usually $a^\varepsilon \ind_A B$).
While these notions of approximate independence are not equivalent,
they all give rise to the same notion of supersimplicity (see for
example in \cite{BenYaacov:SuperSimpleLovelyPairs}),
and in a stable theory they are further equivalent to superstability.
Superstability and $\lambda$-stability up to
perturbation are introduced by the author in
\cite{BenYaacov:TopometricSpacesAndPerturbations},
and one should seek a notion of supersimplicity up to perturbation
which, in stable theories, coincides with
superstability up to perturbation.

\bibliographystyle{amsalpha}
\bibliography{begnac}

\end{document}